\crefname{hypothesis}{Hypothesis}{Hypotheses}
\def\Re{\hbox{\rm Re\kern .8pt}}\def\Im{\hbox{\rm Im\kern .8pt}}
\def\Imm{\hbox{\scriptsize\rm Im\kern .8pt}}
\def\Ree{\hbox{\scriptsize\rm Re\kern .8pt}}
\def\Ileft{I_{\kern .5pt\rm\footnotesize left}(x)}
\def\Iright{I_{\kern .5pt\rm\footnotesize right}(x)}
\def\Imid{I_{\kern .5pt\rm\footnotesize mid}(x)}
\def\Eleft{E_{\kern .5pt\rm\footnotesize left}(x)}
\def\Eright{E_{\kern .5pt\rm\footnotesize right}(x)}
\def\Esides{E_{\kern .5pt\rm\footnotesize sides}(x)}
\def\Eresidues{E_{\kern .5pt\rm\footnotesize residues}(x)}
\title{Resolution of Singularities by Rational Functions}
\author{Astrid Herremans \thanks{Department of Computer Science, KU Leuven, 3001 Leuven, Belgium
  (\email{astrid.herremans@kuleuven.be}, \email{daan.huybrechs@kuleuven.be}).}
\and Daan Huybrechs \footnotemark[1]
\and Lloyd N. Trefethen \thanks{Mathematical Institute, University of Oxford, Oxford OX2 6GG, UK
  (\email{trefethen@maths.ox.ac.uk}).}}
\newcommand{\norm}[1]{\left\lVert#1\right\rVert}
\begin{document}

\maketitle
\begin{abstract}
Results on the rational approximation of functions containing singularities are presented. We build further on the ``lightning method'', recently proposed by Trefethen and collaborators, based on exponentially clustering poles close to the singularities. Our results are obtained by augmenting the lightning approximation set with either a low-degree polynomial basis or partial fractions with poles clustering towards infinity, in order to obtain a robust approximation of the smooth behaviour of the function. This leads to a significant increase in the achievable accuracy as well as the convergence rate of the numerical scheme. For the approximation of $x^\alpha$ on $[0,1]$, the optimal convergence rate as shown by Stahl in 1993 is now achieved simply by least-squares fitting. 
\end{abstract}

\begin{keywords}
rational functions, approximation theory, complex analysis, least-squares, ill-conditioning
\end{keywords}

\begin{MSCcodes}
41A20, 65E05, 65F20
\end{MSCcodes}

\section{Introduction}
Recently, ``lightning approximations'' \cite{gopalSolvingLaplaceProblems2019} for analytic functions with branch point singularities have been proposed. The function is approximated by a rational function with preassigned poles, resulting in root-exponential convergence. Whereas computing rational approximations is a nonlinear problem, fixing the poles in advance linearizes the problem so that the approximant can be found by a matrix least-squares computation. This numerical scheme led to the development of ``lightning solvers'' for the 2D Laplace and Helmholtz equations \cite{fluids5040227, gopalNewLaplaceHelmholtz2019, Trefethenconformal} and more recently for the biharmonic equation, specifically for 2D Stokes flow \cite{brubecklightningStokesSolver2022}. The solutions to these PDEs exhibit singular behaviour near corners of the computational domain. The singularities are effectively resolved by approximating them with analytic functions.

In general, one can represent a rational approximation $r$ to a function $f$ on an approximation domain $E$ as follows:
\begin{equation}
    f(z) \; \approx \; r(z) = \frac{p(z)}{q(z)} = \sum_{j=1}^{N_1} \frac{a_j}{z-p_j} + \sum_{j=0}^{N_2} b_j z^j,
    \label{eq:ratapprox}
\end{equation}
assuming that the finite poles of $r$ are simple. The rational function $r$ has $N_1$ finite poles ${p_j}$ and $N_2$ poles at infinity, which can be described by a polynomial $b(z) = \sum_{j=0}^{N_2} b_j z^j$ of degree $N_2$. This is made explicit by expressing $r$ using partial fractions. The total degree of $r$ is $N = N_1 + N_2$. Approximation with rational functions overcomes two important problems of polynomial approximation. On the one hand, rational functions are able to converge root-exponentially for functions that contain singularities on $E$, in contrast to algebraic convergence for polynomial approximation. Secondly, rational functions are suitable for approximating functions on unbounded domains. 

The lightning method \cite{gopalSolvingLaplaceProblems2019} aims at efficiently approximating functions with branch point singularities at known locations $\{z_i\}$. To that end, the rational approximation problem~\eqref{eq:ratapprox} is linearized by fixing the poles of $r$ in advance. Specifically, one fixes a sequence of finite, simple poles exponentially clustered near each singularity, in order to approximate the local singular behaviour. The set of $N_1$ finite poles $\{p_j\}$ can therefore be partitioned in subsets, each related to a singularity $\{z_i\}$ of $f$. Optionally, a polynomial $b(z)$ is added to the approximation set as well. Thereafter the problem is oversampled and a least-squares system is solved to find the coefficients ${a_j}$ and ${b_j}$ of the discrete best approximation. One oversamples at least linearly in the number of poles, with sample points that are exponentially clustered towards the singularities.

In-depth research has been done on the optimal distribution and convergence behaviour of the finite ``lightning poles''. Root-exponential convergence of the rational approximant is guaranteed for any exponential clustering distribution, provided that it scales with $n^{-1/2}$ as $n \to \infty$ (with $n$ the number of poles clustering towards a singularity) \cite{gopalSolvingLaplaceProblems2019}. The distance of the closest pole to the singularity therefore satisfies $\mathcal{O}(\exp(-\sigma \sqrt{n}\kern1pt))$. The parameter $\sigma$ controls the spacing between the poles and therefore the rate at which they approach the singularity. Further research in \cite{trefethenExponentialNodeClustering2021} shows that ``tapered'' rather than ``uniform'' exponential clustering doubles the rate of convergence. We refer to an approximation using only finite, exponentially clustered poles as a ``lightning approximation''. Except where explicitly stated otherwise, the lightning poles we discuss are distributed in a tapered fashion.

In contrast, this paper focuses on the smooth part of the approximation problem, which has not previously been extensively studied. We investigate the influence of adding $N_2$ poles at infinity, i.e.\ of adding a polynomial of degree $N_2$ to the approximation set. An approximation using $N_1$ lightning poles and $N_2 = \mathcal{O}(\sqrt{N_1}\kern1pt)$ poles at infinity is referred to as a ``lightning + polynomial approximation''.

\subsection{Main results}
For thirty years it has been known that minimax rational approximations to $x^\alpha$ on $[0,1]$ converge at the rate $\mathcal{O}(\exp(-2 \pi \sqrt{\alpha N}\kern1pt))$ \cite{stahlBestUniformRational1993}. Here we show that this optimal rate can be achieved by a lightning + polynomial approximation, i.e.\ by preassigning poles and solving a least-squares problem. We derive this for $\alpha = 1/2$ (\cref{sec:2}) and show it numerically for other values of $\alpha$ (\cref{sec:3}).

We show that inclusion of this low-degree polynomial term is also crucial to numerical stability. With it, coefficient vectors are of modest size and least-squares fits can be quickly computed to close to machine precision.  Without it, coefficient vectors grow exponentially and the convergence stagnates at a much lower accuracy (\cref{sec:4}).

For optimal convergence rates in this model problem, we show that the clustering parameter should be $\sigma = 2\pi/\sqrt{\alpha}$ using tapered poles; for $\alpha = 1/2$ this is $2 \pi \sqrt{2} \approx 8.9$.  By analyzing model problems related to PDEs in regions with corners, however, we explain why the smaller value $\sigma = 4$ has proved effective in many cases (\cref{sec:5}).

Instead of a polynomial term of degree $\mathcal{O}(\sqrt{N}\kern1pt)$, i.e.~poles at infinity, one can achieve the minimax convergence rates with $\mathcal{O}(\sqrt{N}\kern1pt)$ additional finite poles ``tapered at infinity''.  \Cref{sec:2} also presents results on the asymptotics of such big poles for the approximation of $\sqrt{x}$ on $[0,1]$, but we argue that simple polynomial terms are probably better for applications.

\section{\boldmath Approximation of $\sqrt{x}$ on $[0, 1]$}
\label{sec:2}

The rational approximation of $\sqrt{x}$ is an example of historical interest \cite{newman1964,stahl1994rational,Vya74}, and it also serves as a reference case for other branch point singularities. Note that the approximation of $\sqrt{x}$ on $[0,1]$ is equivalent to the approximation of $\left| u \right|$ on $[-1,1]$ by the substitution $x = u^2$.

\subsection{Comparison with previous results}
The most recent results regarding the rational approximation of $\sqrt{x}$ on $[0,1]$ with preassigned exponentially clustered poles on $[-C,0]$ can be found in \cite{trefethenExponentialNodeClustering2021}, where a comparison is made between uniform exponentially clustered poles, meaning in the notation of~\eqref{eq:ratapprox}:
\begin{equation}
    p_j = -C\exp(-\sigma j / \sqrt{N_1} \kern1pt ), \qquad 0 \leq j \leq N_1-1,
    \label{eq:uniformpoles}
\end{equation}
and tapered exponentially clustered poles:
\begin{equation}
    p_j = -C\exp(-\sigma (\sqrt{N_1} - \sqrt{j} \kern1pt )), \qquad 1 \leq j \leq N_1.
    \label{eq:taperedpoles}
\end{equation}
For both clusterings, the smallest pole is of size $\mathcal{O}(\exp{(-\sigma \sqrt{N_1} \kern1pt )})$. A constant term is also included in the approximation set.

In \cite{trefethenExponentialNodeClustering2021}, $\sigma = \pi$ and $\sigma = \sqrt{2}\pi$ are used for the uniform and tapered clusterings, respectively. Here, we scale these poles by setting $C=2$ in \cref{eq:uniformpoles} and \cref{eq:taperedpoles}. These results are compared with the lightning + polynomial approximation, which is constructed by including a polynomial of degree $N_2 = \texttt{ceil}(1.3 \sqrt{N_1} \kern1pt)$. The precise value of $1.3$ is unimportant and could be increased without much affecting the results. In addition, the parameter $\sigma$ is increased by a factor of 2 in comparison with \cite{trefethenExponentialNodeClustering2021}, resulting in sparser uniform and tapered clusterings with $\sigma = 2\pi$ and $2\sqrt{2}\pi$, respectively. Figure \ref{fig:sqrtxcomparison} compares the errors as a function of the total degree $N$ of the different rational approximations. Note that for the lightning approximations, $N = N_1$, while for the lightning + polynomial approximations, one has $N = N_1 + N_2$. The figure also compares these approximation methods with rational best approximations, based on solving a nonlinear approximation problem with free poles. This problem was studied by Vyacheslavov, who proved that the best rational approximation to $\sqrt{x}$ on $[0,1]$ converges as $\mathcal{O}(\exp(-\pi \sqrt{2N} \kern1pt))$ \cite{Vya74}.

In Figure \ref{fig:sqrtxcomparison}, one can see that tapering results in a factor of 2 increase in the convergence rate for both the lightning and the lightning + polynomial approximations, as described in \cite{trefethenExponentialNodeClustering2021}. However, adding a polynomial term and increasing the clustering parameter $\sigma$ has an even bigger influence. For both the uniform and the tapered clusterings, the convergence rate increases by a factor of about $3$. This leads to the tapered lightning + polynomial approximation converging asymptotically at the same rate as the best rational approximation!

The system matrices obtained after sampling are heavily ill-conditioned, yet accurate results are still obtained. This phenomenon is explained in~\cref{sec:4}. Note that the lightning + polynomial approximations converge to a level much closer to machine precision than the lightning approximations. This effect is related to the norm of the coefficient vector.
    
\begin{figure}
    \centering
    \includegraphics[width = .8\linewidth]{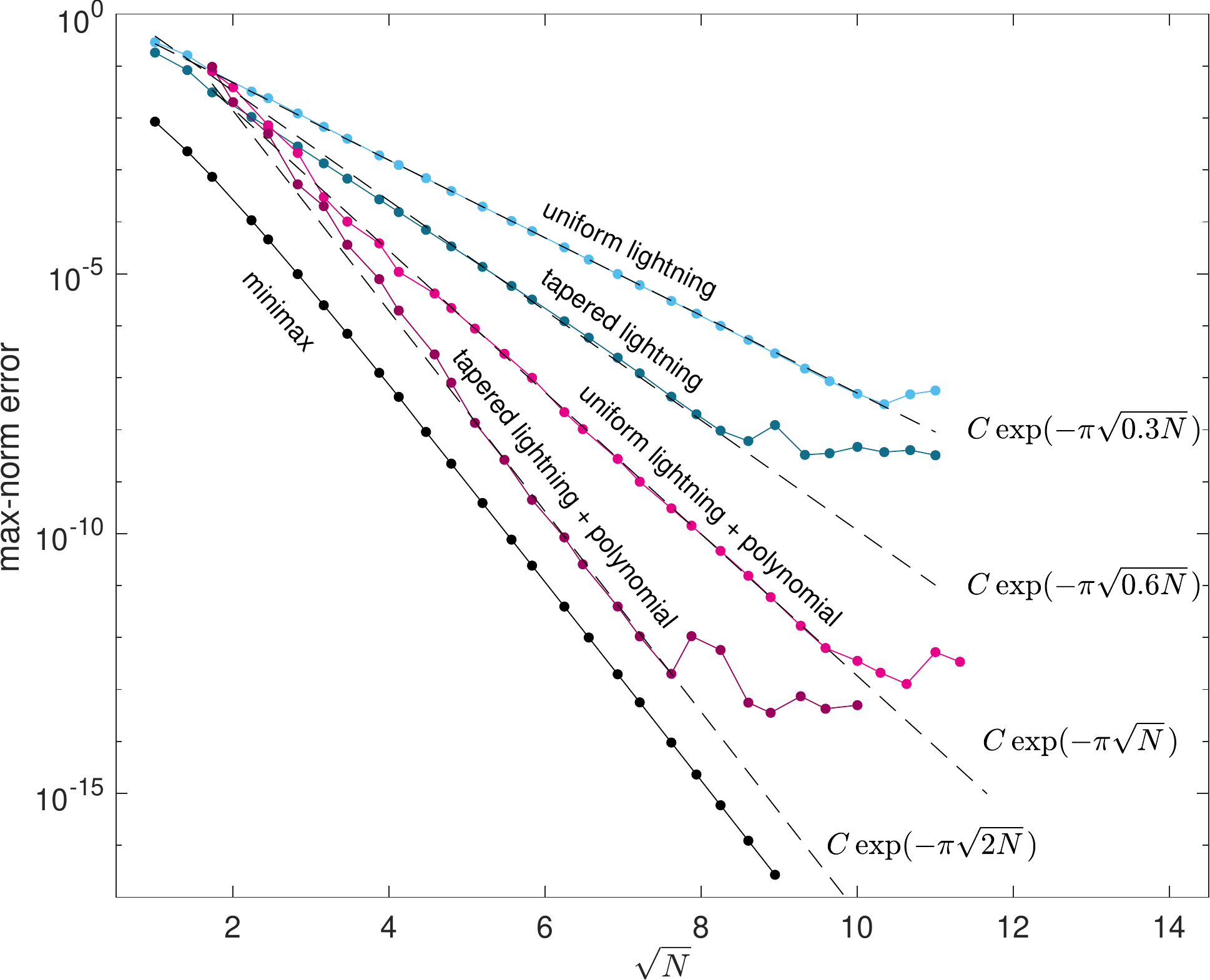} 
    \caption{Max-norm errors of the lightning approximations and rational minimax approximation for $\sqrt{x}$ on $[0,1]$. The convergence rates for the upper two curves are empirical.}
    \label{fig:sqrtxcomparison}
\end{figure}

\subsection{Construction of the lightning + polynomial approximation} \label{sec:22}
This section shows how a lightning + polynomial approximation of $\sqrt{x}$ on $[0,1]$ can be constructed that converges at the rate of the best rational approximation. In practice, accurate approximations are however found by numerically computing the discrete best approximation. \Cref{sec:4} shows how the existence of an accurate approximation allows one to understand the accuracy of the numerical method. A number of insights follow from this construction, which are used to examine more general approximation problems in \cref{sec:3,sec:5}.

The construction is inspired by the rational approximation described in \cite[Chapter 25]{trefethenApproximationTheoryApproximation2019}, which originates with \cite{stenger}. The approximation starts from the following integral representation of $\sqrt{x}$:
\begin{equation*}
    \sqrt{x} = \frac{2x}{\pi} \int^{\infty}_{0} \frac{1}{t^2+x} dt = \frac{2x}{\pi} \int_{-\infty}^{\infty} \frac{e^s}{e^{2s}+x} ds,
\end{equation*}
where $t = e^s$. One can approximate this integral using a quadrature rule, giving rise to a rational function of $x$ approximating $\sqrt{x}$. In \cite[Chapter 25]{trefethenApproximationTheoryApproximation2019} this is achieved by truncating the integral at $\pm T$:
\begin{equation}
    \sqrt{x} \approx \frac{2x}{\pi} \int_{-T}^{+T} \frac{e^s}{e^{2s}+x} ds,
    \label{eq:trunc}
\end{equation}
and thereafter using the trapezoidal rule to discretize the integral. We now adapt this construction to arrive at an approximation with tapered poles, by introducing the substitution $s + T = \sqrt{u}$:
\begin{equation}
     \sqrt{x} \approx \frac{2x}{\pi} \int_{0}^{4T^2} \frac{1}{2\sqrt{u}} \left( \frac{e^{\sqrt{u}-T}}{e^{2(\sqrt{u}-T)}+x} \right) \kern2pt du.
     \label{eq:inteq}
\end{equation}
Discretization using the trapezoidal rule in $N_t$ quadrature points with a stepsize $h$,
$$u = jh, \qquad 1 \leq j \leq N_t,$$ 
with $T = \sqrt{N_t h/4}$, then gives rise to the following rational approximation to $\sqrt{x}$:
\begin{equation}
    \sqrt{x} \approx r_t(x) = \frac{xh}{\pi} \sum_{j=1}^{N_t} \frac{1}{\sqrt{jh}} \left( \frac{e^{\sqrt{jh}-\sqrt{N_t h/4}}}{e^{2\sqrt{jh}-2\sqrt{N_t h/4}} + x} \right).
    \label{eq:fulltrap}
\end{equation}
The error of this rational approximation has two components: a truncation error, due to the truncation involved in \cref{eq:trunc}, and a discretization error, due to approximation of the truncated integral in \cref{eq:inteq} with the trapezoidal rule. The truncation error is of magnitude $\mathcal{O}(\exp(-T)) = \mathcal{O}(\exp(-\sqrt{N_t h/4}\kern1pt))$. In contrast, the discretization error increases as $h$ increases. Optimally, these two errors are balanced. From the following theorem, it follows that this is the case for a stepsize $h = 2\pi^2$.

\begin{theorem}
    The rational approximation \cref{eq:fulltrap} with $h = 2\pi^2$ converges to $\sqrt{x}$ with approximation error:
    $$\lvert r_t(x) - \sqrt{x} \kern1pt \rvert < 20 \kern1pt e^{-\sqrt{N_th/4}}\kern1pt = 20 \kern1pt e^{-\pi\sqrt{N_t/2}}$$
    as $N_t \to \infty$, uniformly for $x \in [0,1]$, assuming the bound introduced in \cref{conj:bound} holds.
    \label{thm:trap}
\end{theorem}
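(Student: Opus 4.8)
The plan is to split the error $|r_t(x) - \sqrt{x}|$ into the truncation error and the discretization error, as the paper already indicates, and bound each separately before balancing the stepsize.

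\emph{Step 1: Truncation error.} Starting from the exact representation $\sqrt{x} = \frac{2x}{\pi}\int_{-\infty}^{\infty} \frac{e^s}{e^{2s}+x}\,ds$, I would bound the tails $\int_{|s|>T}$. For $s\to+\infty$ the integrand behaves like $e^{-s}$ and for $s\to-\infty$ like $e^{s}/x\cdot$(bounded), but uniformly in $x\in[0,1]$ the dominant tail is the $s>T$ piece, giving a contribution $\mathcal{O}(x\,e^{-T})$, hence $\mathcal{O}(e^{-T})$ uniformly. One must be a little careful near $x=0$, but the factor $x$ in front helps there. This yields a clean bound like $\frac{2}{\pi}\,e^{-T}$ up to lower-order terms.

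\emph{Step 2: Discretization error.} After the substitution $s+T=\sqrt{u}$, the truncated integral \eqref{eq:inteq} is approximated by the trapezoidal rule with stepsize $h$ on $[0,4T^2]$. The standard tool here is the Euler--Maclaurin / contour-integral analysis of the trapezoidal rule for analytic integrands (as in \cite[Chapter 25]{trefethenApproximationTheoryApproximation2019}): the error decays like $e^{-2\pi d/h}$, where $d$ is the width of the strip of analyticity of the integrand $g(u) = \frac{1}{2\sqrt{u}}\,\frac{e^{\sqrt{u}-T}}{e^{2(\sqrt{u}-T)}+x}$ as a function of $u$. The poles of $g$ in the $u$-plane come from $e^{2(\sqrt{u}-T)} = -x$, i.e. $\sqrt{u} = T + \tfrac12\ln x + i\pi(k+\tfrac12)$, and from the branch point at $u=0$. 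Tracking the nearest singularity to the real axis gives the relevant half-width; for $x\in[0,1]$ the imaginary part $\pi/2$ in $\sqrt{u}$ translates, via $u=(\sqrt u)^2$, into a strip whose effective half-width is what produces the factor $e^{-\pi^2/h \cdot(\text{const})}$. This is precisely where the hypothesis \cref{conj:bound} is invoked to control the integrand uniformly along the contour and near $u=0$, so I would state the needed bound, cite \cref{conj:bound}, and then apply the trapezoidal-rule estimate as a black box.

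\emph{Step 3: Balancing.} With truncation error $\lesssim e^{-T}$ and discretization error $\lesssim e^{-c\pi^2/h}$ for the appropriate constant $c$, and with $T=\sqrt{N_t h/4}$, I would choose $h$ so that the two exponents match. Working out the constant $c$ from Step 2 gives $h=2\pi^2$, at which point both errors are $\mathcal{O}(e^{-T}) = \mathcal{O}(e^{-\pi\sqrt{N_t/2}})$. Summing the two contributions and absorbing constants into the bound gives the stated $20\,e^{-\sqrt{N_t h/4}}$; the constant $20$ is just a convenient explicit value that dominates the sum of the two $\mathcal{O}(1)$ prefactors for all sufficiently large $N_t$.

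\emph{Main obstacle.} The delicate point is the discretization-error estimate in Step 2: one needs a uniform-in-$x$ bound on the analytic integrand $g(u)$ along a horizontal strip in the $u$-plane, and this is complicated both by the branch point at $u=0$ (where the substitution $s+T=\sqrt u$ is singular) and by the poles migrating as $x\to 0$. This is exactly the role of \cref{conj:bound}, so the honest proof is conditional on it; granting that bound, the rest is a routine Euler--Maclaurin/trapezoidal-rule argument together with the elementary tail estimate of Step 1.
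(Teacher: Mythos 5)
Your decomposition (truncation $+$ discretization, then balance the stepsize) matches the paper's Appendix~A in outline, and Step~1 is essentially the paper's first lemma. The gap is in Step~2, and it is a real one, not a technicality.

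The black-box estimate ``trapezoidal error $\lesssim e^{-2\pi d/h}$ with $d$ the strip half-width'' cannot be made uniform in $x$, because the poles of the integrand $f(u,x)$ in the $u$-plane sit at
\[
\pi_{\pm} \;=\; \Bigl(T+\tfrac12\log x\Bigr)^2 - \tfrac{\pi^2}{4} \;\pm\; i\pi\Bigl(T+\tfrac12\log x\Bigr),
\]
so the distance from the real axis is $\pi(T+\tfrac12\log x)$, which shrinks as $x$ decreases. At $x=1$ the half-width is $\pi T$ and the naive bound gives $e^{-2\pi^2 T/h}=e^{-T}$ as desired, but at $x=e^{4-2T}$ the half-width has dropped to $2\pi$ and the naive bound collapses to the useless constant $e^{-4\pi^2/h}=e^{-2}$. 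What rescues the estimate is that the residue at $\pi_\pm$ is $\mp i\sqrt{x}/\pi$, so its magnitude $\sqrt{x}/\pi$ shrinks at exactly the rate needed to compensate: with $h=2\pi^2$,
\[
\bigl| r_{+}\delta(\pi_{+})\bigr| \;\le\; \frac{3}{2\pi}\,\sqrt{x}\;e^{-2\pi^2(T+\frac12\log x)/h}
\;=\; \frac{3}{2\pi}\,\sqrt{x}\,x^{-1/2}\,e^{-T}
\;=\; \frac{3}{2\pi}\,e^{-T},
\]
independent of $x$. This cancellation is the actual reason $h=2\pi^2$ is the right choice; it is not a strip-width balance, and a generic Euler--Maclaurin argument does not see it. The paper therefore writes the trapezoidal error as an \emph{exact} contour identity (its Theorem~A.4), extracts the two residue terms explicitly, and estimates them as above.

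Two further points you should fold in. First, for very small $x\in[0,e^{4-2T}]$ the poles are too close to the real axis to run a contour around at all; the paper handles this range by an elementary direct argument (the integrand is positive and monotone in $u$, so $S(x)<I(x)<\sqrt{x}$ and both are $\mathcal{O}(e^{-T})$), not by any analyticity estimate. Second, the role of \cref{conj:bound} is narrower than you assign it: it is a numerical bound on the single remaining contour integral $\int_\Gamma f\,\delta\,du$ \emph{after} the residues have already been extracted and after the branch point at $u=0$ has been avoided by shifting the contour endpoints from $\{0,4T^2\}$ to $\{1,4T^2+1\}$ (the cost of that shift is the separate term $\int_{\Gamma_1\cup\Gamma_2}f\,du$, which is bounded directly). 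It is not a cover-all for the migrating poles or the branch-point singularity, both of which are treated explicitly.
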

\begin{proof}
    See \cref{app1}, where an argument is given based on the representation of $r_t(x) -\sqrt{x}$ by a contour integral.
\end{proof}

The large poles of \cref{eq:fulltrap} can be approximated by a low-degree polynomial with exponential convergence. This results in a lightning + polynomial approximation close to the trapezoidal rule approximation, yet of essentially 4 times lower degree.

\begin{lemma}
     There exist coefficients $\{a_j\}_{j=1}^{N_1}$ with $N_1 = N_t/4$ and a polynomial $b(x)$ of degree $N_2 = \mathcal{O}(\sqrt{N_1} \kern1pt)$, for which the lightning + polynomial approximation $r(x)$ \cref{eq:ratapprox} having tapered lightning poles \cref{eq:taperedpoles} with $\sigma = 2\sqrt{h}$, satisfies:
    $$\lvert r_t(x) - r(x) \rvert = \mathcal{O}\left( e^{-\sqrt{N_t h/4}}\kern1pt \right)$$
    as $N_t \to \infty$, uniformly for $x \in [0,1]$. I.e., the error is of the same magnitude as the approximation error involved in \cref{thm:trap}.
    \label{lemma:lp}
\end{lemma}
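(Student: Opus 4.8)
The plan is to write $r_t$ from \cref{eq:fulltrap} in partial-fraction form and recognise that its $N_1=N_t/4$ poles nearest the origin are exactly the tapered lightning poles. With $T=\sqrt{N_t h/4}$ and $\sqrt{N_t}=2\sqrt{N_1}$, expanding $x/(x+q)=1-q/(x+q)$ in each term of \cref{eq:fulltrap} gives
\[
  r_t(x)=c_0+\sum_{j=1}^{N_t}\frac{\alpha_j}{x-p_j^{0}},\qquad
  p_j^{0}=-e^{\,2\sqrt{jh}-2T}=-\exp\bigl(-2\sqrt h\,(\sqrt{N_1}-\sqrt j\,)\bigr),\qquad
  \alpha_j=-\frac{\sqrt h}{\pi\sqrt j}\,e^{\,3\sqrt{jh}-3T},
\]
where $c_0=\sum_{j=1}^{N_t}\frac{\sqrt h}{\pi\sqrt j}e^{\sqrt{jh}-T}$. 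For $1\le j\le N_1$ the pole $p_j^{0}$ coincides with the tapered lightning pole $p_j$ of \cref{eq:taperedpoles} with $\sigma=2\sqrt h$ (and $C=1$). I would therefore set $a_j:=\alpha_j$ for $1\le j\le N_1$ and define
\[
  g(x):=r_t(x)-\sum_{j=1}^{N_1}\frac{a_j}{x-p_j}=c_0+\sum_{j=N_1+1}^{N_t}\frac{\alpha_j}{x-p_j^{0}}.
\]
Since $jh>N_1 h=T^2$ for $j>N_1$, each surviving pole satisfies $p_j^{0}<-1$, so $g$ is analytic on $\complex\setminus(-\infty,-1]$. The lemma then follows once we produce a polynomial $b$ of degree $N_2=\mathcal O(\sqrt{N_1})$ with $\|g-b\|_{[0,1]}=\mathcal O(e^{-T})$, because $r:=\sum_{j=1}^{N_1}a_j/(x-p_j)+b$ obeys $|r-r_t|=|b-g|=\mathcal O(e^{-T})=\mathcal O(e^{-\sqrt{N_t h/4}})$ uniformly on $[0,1]$.

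For this polynomial approximation I would fix any $\rho$ with $1<\rho<3+2\sqrt2$, so that the Bernstein $\rho$-ellipse $E_\rho$ of $[0,1]$ (the image under $x\mapsto 2x-1$ of the $\rho$-ellipse of $[-1,1]$) lies at a fixed positive distance $d>0$ from $(-\infty,-1]$, hence from every pole of $g$, for all $N_t$. On $E_\rho$ one has $|x-p_j^{0}|\ge d$, and the summands of $c_0$ and of $\sum_{j>N_1}|\alpha_j|$ are increasing in $j$, so bounding each sum by $N_t$ times its value at $j=N_t$ (where $\sqrt{jh}=2T$) yields $|c_0|\le C\,Te^{T}$ and $\sum_{j>N_1}|\alpha_j|\le C\,Te^{3T}$; hence $|g|\le M$ on $E_\rho$ with $M=C_1 e^{4T}$, for absolute constants $C,C_1$, uniformly in $N_t$. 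The classical estimate for the degree-$n$ truncated Chebyshev expansion of a function analytic and bounded by $M$ in $E_\rho$ (see \cite[Chapter~8]{trefethenApproximationTheoryApproximation2019}) then supplies a polynomial $b$ of degree $n$ with $\|g-b\|_{[0,1]}\le 2M\rho^{-n}/(\rho-1)$. Taking $n=\lceil 6T/\log\rho\,\rceil$ makes the right-hand side $\le e^{-T}$ for all sufficiently large $N_t$; since $h=2\pi^2$ is fixed and $T=\sqrt{N_1 h}$, this $n$ equals $\mathcal O(\sqrt{N_1})$, which we take as $N_2$.

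The one delicate point, and what I expect to be the main obstacle, is uniformity in $N_t$. The poles of $g$ crowd towards $-1$ — the gap $-1-p_{N_1+1}^{0}$ is only $\mathcal O(h/T)$ — and the coefficients $\alpha_j$ and $c_0$ grow like $e^{\mathcal O(T)}$, so one must verify both that all these poles nevertheless stay strictly to the left of $-1$, so that a single Bernstein ellipse that does not shrink with $N_t$ serves for every $N_t$, and that $\sup_{E_\rho}|g|$ is genuinely $e^{\mathcal O(T)}$, with no larger constant in the exponent hidden by the number of terms or by cancellations. Both reduce to the elementary facts that $\sqrt{jh}\le 2T$ on the retained range and $N_t=4N_1=4T^2/h$, giving $\log M=\mathcal O(T)$; and because $T=\mathcal O(\sqrt{N_1})$ this extra $\mathcal O(T)$ in the degree is harmless, leaving $N_2=\mathcal O(\sqrt{N_1})$.
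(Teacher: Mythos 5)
Your proof is correct and follows essentially the same route as the paper: write $r_t$ in partial fractions, observe that the first $N_1$ poles coincide with the tapered lightning poles, and approximate the tail (large poles plus constant) by a polynomial via a Bernstein-ellipse argument. Where you go further is in explicitly controlling the size of the tail $g$ on a \emph{fixed} ellipse $E_\rho$ with $\rho < 3 + 2\sqrt{2}$. The paper simply invokes $\mathcal{O}(\rho^{-N_2})$ convergence from the position of the nearest pole $p_{N_1+1}\approx -1$ and concludes $N_2 \ge T/\log\rho$ suffices, leaving implicit the fact that the $\mathcal{O}$-constant (namely $\sup_{E_\rho}|g|$) grows like $e^{\mathcal{O}(T)}$ with $N_t$; taken literally, $N_2 = T/\log\rho$ would not defeat that growth. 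Your bound $M \lesssim e^{4T}$ together with the choice $N_2 = \lceil 6T/\log\rho\rceil$ makes this step airtight while still giving $N_2 = \mathcal{O}(\sqrt{N_1})$. You are also right that a fixed $\rho < 3+2\sqrt 2$ is the safe choice, since the poles crowd toward $-1$ from the left as $N_t\to\infty$; this is cleaner than the paper's phrasing ``$\rho > 3+2\sqrt2$'', which is valid for each finite $N_t$ but not uniformly. In short: same decomposition and same key idea, but your version patches a quantitative gap that the paper's brief argument leaves open.
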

\begin{proof}
Consider \cref{eq:fulltrap} in partial fractions form, while substituting $N_t=4N_1$:
\begin{equation*}
    r_t(x) = \sum_{j=1}^{4N_1} \frac{a_j}{x - p_j} + C.
\end{equation*}
A bit of calculation determines the poles, residues, and constant term:
\begin{align}
    \text{poles:} \qquad p_j &= -\exp(-2\sqrt{h}(\sqrt{N_1} - \sqrt{j} \kern1pt )), \qquad &1 \leq j \leq 4N_1, \label{eq:trapezoidal_poles}\\
    \text{residues:} \qquad a_j &= \frac{\sqrt{h}}{\pi} \kern2pt p_j \kern1pt \sqrt{\frac{\lvert p_j \rvert}{j}}, \qquad &1 \leq j \leq 4N_1, \nonumber \\
    \text{constant term:} \qquad C &= \frac{\sqrt{h}}{\pi} \kern2pt \sum_{j=1}^{4N_1} \kern2pt \sqrt{\frac{\lvert p_j \rvert}{j}}.\nonumber
\end{align}
The first $N_1$ poles of this rational approximation are exactly the tapered lightning poles \cref{eq:taperedpoles} with $\sigma = 2\sqrt{h}$. The remaining $3N_1$ poles are ``large'' in the sense that $\lvert p_j \rvert > 1$. These large poles and the constant term can be approximated by a polynomial $b(x)$ of degree $N_2$:
\begin{equation}
    b(x) = \sum_{j=0}^{N_2} b_j x^j \approx \sum_{j=N_1+1}^{4N_1} \frac{a_j}{x - p_j} + C.
    \label{eq:bN2}
\end{equation}
Since the right-hand side is analytic in a domain containing $[0,1]$ in its interior, the polynomial approximation may converge exponentially \cite[Chapter 8]{trefethenApproximationTheoryApproximation2019}. The rate of convergence depends on the Bernstein ellipse $E_\rho$ wherein the function is analytic, after transplanting the problem to the interval $[-1,1]$. This region is determined by the position of the closest pole, namely $p_{N_1+1} = -\exp(-2\sqrt{h}(\sqrt{N_1} - \sqrt{N_1+1}\kern1pt)) < -1$. A pole at $-1$ results in analyticity in a Bernstein ellipse $E_\rho$ with $\rho = 3 + 2\sqrt{2}$. One can therefore conclude that there exists a polynomial $b(x)$ with an approximation error of order $\mathcal{O}(\rho^{-N_2}) = \mathcal{O}(\exp(-N_2 \log{\rho}))$ with $\rho > 3 + 2\sqrt{2}$. 

In summary, there exists a lightning + polynomial approximation $r$ of the proposed form \cref{eq:ratapprox} with $a_j$, $p_j$ and $b(x)$ as defined above. If we take $$N_2 \geq \sqrt{N_th/4}\kern1pt/\log{\rho} = \mathcal{O}(\sqrt{N_1}),$$ $\lvert r_t(x) - r(x) \rvert = \mathcal{O}(\exp(-\sqrt{N_th/4}))$ is satisfied.
\end{proof}

Note that the change of variables $s + T = \sqrt{u}$ introduced in \cref{eq:inteq} depends on the truncation variable $T$. Therefore, the tapered distribution of the lightning poles \cref{eq:trapezoidal_poles} is intrinsically connected to numerical truncation. This connection and the notion of a truncation error also appears in the analysis of Stahl \cite{stahl1994rational} and Trefethen and collaborators \cite{trefethenExponentialNodeClustering2021}. 

One can now state that the lightning + polynomial approximation with $ \sigma = 2\sqrt{2}\pi$ converges to $\sqrt{x}$ at the rate of the best rational approximation.
\begin{theorem}\label{thm:convergence}
    There exist coefficients $\{a_j\}_{j=1}^{N_1}$ and a polynomial $b(x)$ of degree $N_2 = \mathcal{O}(\sqrt{N_1} \kern1pt)$, for which the degree $N$ lightning + polynomial approximation $r(x)$ \cref{eq:ratapprox} having tapered lightning poles \cref{eq:taperedpoles} with 
    \begin{equation}
        \sigma = 2\sqrt{2}\pi
        \label{eq:optsigmasqrtx}
    \end{equation}
    satisfies:
    $$\vert r(x) - \sqrt{x} \kern1pt \rvert = \mathcal{O}\left(e^{-\pi \sqrt{2N}} \kern1pt\right)$$
    as $N \to \infty$, uniformly for $x \in [0,1]$, under the same condition as \cref{thm:trap}.
\end{theorem}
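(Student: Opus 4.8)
The plan is to derive \cref{thm:convergence} by simply chaining together \cref{thm:trap} and \cref{lemma:lp} with the triangle inequality, and then rewriting the resulting bound in terms of the total degree $N = N_1 + N_2$ instead of the number of quadrature points $N_t$. No new analysis is needed beyond those two results.

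Concretely, I would first fix the stepsize $h = 2\pi^2$. This is the value appearing in \cref{thm:trap}, and it is also the value for which the tapered clustering parameter $\sigma = 2\sqrt{h}$ of \cref{lemma:lp} equals $2\sqrt{2}\,\pi$, i.e.\ \cref{eq:optsigmasqrtx}. With this choice, \cref{thm:trap} gives $\lvert r_t(x) - \sqrt{x}\rvert < 20\,e^{-\sqrt{N_t h/4}}$ uniformly on $[0,1]$ (assuming \cref{conj:bound}), where $r_t$ is the trapezoidal rational function \cref{eq:fulltrap}. Next, \cref{lemma:lp} produces a lightning + polynomial approximation $r$ of the form \cref{eq:ratapprox} whose $N_1 = N_t/4$ finite poles are exactly the tapered lightning poles \cref{eq:taperedpoles} with $\sigma = 2\sqrt{h} = 2\sqrt{2}\,\pi$, together with a polynomial term of degree $N_2 = \mathcal{O}(\sqrt{N_1}\kern1pt)$, and with $\lvert r_t(x) - r(x)\rvert = \mathcal{O}(e^{-\sqrt{N_t h/4}})$ uniformly on $[0,1]$. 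The triangle inequality then yields $\lvert r(x) - \sqrt{x}\rvert = \mathcal{O}(e^{-\sqrt{N_t h/4}})$.

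It remains to express this in $N$. Because $N_2 = \mathcal{O}(\sqrt{N_1}\kern1pt)$, we have $N_1 = N - \mathcal{O}(\sqrt{N}\kern1pt)$ and $N_t = 4N_1$, so with $h = 2\pi^2$ one computes $\sqrt{N_t h/4} = \pi\sqrt{N_t/2} = \pi\sqrt{2 N_1} = \pi\sqrt{2N - \mathcal{O}(\sqrt{N}\kern1pt)} = \pi\sqrt{2N} - \mathcal{O}(1)$, whence $e^{-\sqrt{N_t h/4}} = \mathcal{O}(e^{-\pi\sqrt{2N}})$, the asserted rate. The only point requiring care is this last substitution: one must verify that the $\mathcal{O}(\sqrt{N}\kern1pt)$ gap between $N$ and $N_1$, arising from the added polynomial degree, shifts the exponent $\pi\sqrt{2N_1}$ by only an $\mathcal{O}(1)$ amount and is therefore absorbed into the implied constant rather than degrading the exponential rate. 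Finally, the clause ``under the same condition as \cref{thm:trap}'' is precisely the hypothesis that \cref{conj:bound} holds, inherited here through the use of \cref{thm:trap}.
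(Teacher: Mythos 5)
Your proposal matches the paper's proof exactly in structure: fix $h = 2\pi^2$ so that the clustering parameter $2\sqrt{h} = 2\sqrt{2}\pi$, chain \cref{thm:trap} and \cref{lemma:lp} by the triangle inequality, then convert from $N_t = 4N_1$ to the total degree $N = N_1 + N_2$. If anything you are a little more careful than the paper in the final step: the paper simply writes ``$N = N_1 + N_2 = \mathcal{O}(N_1)$, which leads to the given convergence behaviour,'' which on its own would only give $e^{-\pi\sqrt{2N/C}}$ for some constant $C$; you correctly observe that the stronger fact $N_2 = \mathcal{O}(\sqrt{N_1})$ is what makes $\sqrt{2N_1} = \sqrt{2N} - \mathcal{O}(1)$, so the rate $e^{-\pi\sqrt{2N}}$ survives with the discrepancy absorbed into the implied constant.
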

\begin{proof} \sloppy
    From \cref{thm:trap,lemma:lp}, it follows that there exists a lightning + polynomial approximation $r$ of the proposed form satisfying:
    $$\vert r(x) - \sqrt{x} \kern1pt \rvert = \mathcal{O}\left(e^{-\pi\sqrt{2N_1}} \kern1pt\right).$$
    Furthermore, $r$ has total degree $N = N_1 + N_2 = \mathcal{O}(N_1)$, which leads to the given convergence behaviour.
\end{proof}

\subsection{Tapering at both ends}\label{sec:23}
Instead of a low-degree polynomial, i.e.~poles at infinity, one can include additional finite poles ``clustering towards infinity''. The best rational approximation to $|x|$ on $[-1,1]$ was studied by Stahl using tools of potential theory and asymptotic analysis~\cite{stahl1994rational} and all its poles are finite. As mentioned before, note that this problem is equivalent to the approximation of $\sqrt{x}$ on $[0,1]$. The asymptotic distribution of the poles away from $0$ is given explicitly in~\cite[Theorem 2.2]{stahl1994rational}. We can find explicit asymptotic estimates of the largest poles themselves from these results.

\begin{theorem}\label{thm:large_poles}
Let $p_{j,N}$, $j=1,\ldots,N$, be the poles of the degree $N$ best rational approximant to $\sqrt{x}$ on $[0,1]$, ordered by increasing absolute value. For fixed $k \geq 0$, the largest poles satisfy
 \begin{equation}\label{eq:large_poles}
  p_{N-k,N} \sim -\frac{8 N}{(2k+1)^2 \pi^2}, \qquad N \to \infty.
 \end{equation}
In particular, the largest pole satisfies $p_{N,N} \sim -\frac{8 N}{\pi^2}$.
\end{theorem}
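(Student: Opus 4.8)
The plan is to extract the claim from Stahl's asymptotic description of the pole set of the best approximant, \cite[Theorem~2.2]{stahl1994rational}, after transplanting it to $\sqrt{x}$ on $[0,1]$ and zooming in on the finitely many largest poles. Under the substitution $x=u^2$, the degree-$N$ best approximant to $\sqrt{x}$ on $[0,1]$ corresponds to the even best approximant to $|u|$ on $[-1,1]$, whose poles occur in conjugate imaginary pairs $\pm i\beta$; a negative real pole $p<0$ of the $\sqrt{x}$ approximant is exactly the pair $\beta=\pm\sqrt{-p}$. The first step is therefore to rewrite \cite[Theorem~2.2]{stahl1994rational} in the normalisation appropriate to the $\sqrt{x}$ problem and to the degree convention \cref{eq:ratapprox}, keeping careful track of the factor-of-two bookkeeping in the degree that the substitution introduces, so that it becomes a statement about the $p_{j,N}$ ordered by absolute value.

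From that description one reads off the behaviour of the pole-counting function in the ``large pole'' regime. Let $H_N(r)$ denote the asymptotic (smoothed) number of poles with $|p_{j,N}|>r$. A short computation with Stahl's limiting pole density should give, for $r\to\infty$ on the scale $r=\mathcal{O}(N)$,
\[
   H_N(r)\;\sim\;\frac{\sqrt{2N}}{\pi\sqrt{r}}\,,
\]
where $\sqrt{2N}$ is the same parameter that produces the best-approximation error rate $\mathcal{O}(e^{-\pi\sqrt{2N}})$ of Stahl and Vyacheslavov \cite{stahlBestUniformRational1993,Vya74}. Stahl's analysis also equips the poles with a quantization condition carrying a half-integer offset---the offset $\half$ reflecting the cosine-type local structure at the $\sqrt{\,\cdot\,}$ singularity---so that the $(k+1)$th largest pole is, up to $o(1)$, the value $r$ at which $H_N(r)=k+\half$. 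Solving $\sqrt{2N}/(\pi\sqrt{-p_{N-k,N}})=k+\half$ then yields
\[
   -p_{N-k,N}\;\sim\;\frac{2N}{\big((k+\half)\pi\big)^2}\;=\;\frac{8N}{(2k+1)^2\pi^2}\,,
\]
which is the claim; the case $k=0$ gives $p_{N,N}\sim-8N/\pi^2$.

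I expect the main obstacle to lie in the first two steps rather than the last. One must state \cite[Theorem~2.2]{stahl1994rational} correctly in the present normalisation and then verify that its asymptotic formula---phrased there for poles ``away from $0$'', and a priori perhaps valid only for $j$ below $(1-\varepsilon)N$---genuinely extends, with error terms that remain $o(1)$, to indices $j$ within a fixed distance $k$ of $N$. This is where the edge and quantization details of Stahl's potential-theoretic analysis must be used, together with the observation that only $\mathcal{O}(1)$ poles lie at distance of order $N$ from the origin, so that fixing $k$ and letting $N\to\infty$ is harmless. Pinning down the constant $\sqrt{2N}/\pi$ in the counting-function tail and the half-integer offset is the delicate part; once these are in hand, inverting $H_N(-p_{N-k,N})=k+\half+o(1)$ and simplifying is routine, and the resulting constants $8/((2k+1)^2\pi^2)$ can be cross-checked numerically against best approximants computed by a Remez- or AAA--Lawson-type algorithm.
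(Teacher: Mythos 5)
Your proposal follows essentially the same route as the paper: read off from Stahl's Theorem~2.2 an explicit (smoothed) pole-counting function, expand it in the large-pole regime, invert, and transplant via $p_{j,N}=\pi_{j,2N}^2$. The two points you flag as ``delicate'' are exactly the ones the paper nails down, and both drop out of Stahl's formula with no extra input.

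Specifically, Stahl gives an explicit function $H_N(y)$ of the form~\eqref{eq:H}, namely $H_N(y)=\frac{N+1}{2}-\sqrt{N}\,F_1(y)-F_2(y)$ with $F_1,F_2$ as in~\eqref{eq:F1}--\eqref{eq:F2}, and the poles of the degree-$N$ best approximant to $|x|$ on $[-1,1]$ satisfy $\pi_{j,N}\sim iH_N^{-1}(j)$. Your tail formula $H_N(r)\sim\sqrt{2N}/(\pi\sqrt{r})$ is correct, but it should not be posited: it is obtained by expanding the integrand of $F_1$ at infinity, $\frac{1}{t\sqrt{1+t^2}}=\frac{1}{t^2}+O(t^{-4})$, giving $F_1(y)=\frac{1}{\pi y}+O(y^{-3})$, while $F_2(y)$ is $N$-independent and hence lower order. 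Likewise the ``half-integer quantization offset'' that you attribute to the local cosine structure is not a separate physical input; it is exactly the ``$+1$'' in the $\frac{N+1}{2}$ term. Setting $j=N-k$ in $H_{2N}^{-1}$ gives $\frac{2N+1}{2}-(N-k)=k+\tfrac12$, which is where the $(2k+1)$ comes from. Finally, your worry about pushing Stahl's asymptotics all the way to indices within $O(1)$ of the top is legitimate but is handled by a self-consistency observation: the resulting poles $\sim 2\sqrt{2N}/((2k+1)\pi)$ grow with $N$, so the large-$y$ expansion of $F_1$ used to derive them is indeed justified in this regime. With those two ingredients made explicit (expand $F_1$, keep the $\frac{N+1}{2}$), your sketch becomes the paper's proof.
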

\begin{proof}
    See \cref{app2}.
\end{proof}

Bounds are given for the small poles in~\cite{stahl1994rational}, but their distribution is not identified. Results for $x^\alpha$ in the later reference~\cite{stahlBestUniformRational2003} are not as explicit as those for $\sqrt{x}$ in~\cite[Theorem 2.2]{stahl1994rational}. Still, there is more to learn from~\cite{stahl1994rational}, such as an estimate of the number of poles larger (in absolute value) than $1$.

\begin{theorem}\label{thm:nb_of_poles}
The number of poles $p_{j,N}$ satisfying $|p_{j,N}| > 1$ scales like ${\mathcal O}(\sqrt{N} \kern1pt)$. In particular their total number is approximately $0.4 \sqrt{N}$.
\end{theorem}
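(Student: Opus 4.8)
The plan is to read the count directly off the asymptotic pole locations, exactly as \cref{thm:large_poles} was extracted from Stahl's distributional result~\cite[Theorem 2.2]{stahl1994rational}. By that result, the poles $p_{j,N}$ of the degree $N$ best approximant that remain bounded away from the singularity at $0$ obey an asymptotic description whose leading term for the $(k+1)$st largest pole is $p_{N-k,N}\sim -8N/((2k+1)^2\pi^2)$. The poles with $|p_{j,N}|>1$ are precisely those whose index $k$ is small enough that this magnitude still exceeds $1$, so the number in question is obtained by inverting the magnitude-versus-index relation at the threshold $1$.

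Concretely, I would first treat $k$ as a continuous variable and solve $8N/((2k+1)^2\pi^2)=1$, which gives $2k+1=(2\sqrt2/\pi)\sqrt N$, i.e.\ $k^*=(\sqrt2/\pi)\sqrt N-1/2$. The large poles are then those with indices $k=0,1,\dots,\lfloor k^*\rfloor$, and so their number equals $k^*+\mathcal{O}(1)=(\sqrt2/\pi)\sqrt N+\mathcal{O}(1)$. Since the leading term is a fixed multiple of $\sqrt N$, this already establishes the $\mathcal{O}(\sqrt N)$ scaling claimed in \cref{thm:nb_of_poles}. The leading constant is $\sqrt2/\pi\approx0.45$; the slightly smaller empirical figure $\approx0.4$ reflects a slow onset of the asymptotic regime, and the exact decimal (should one want it) follows by carrying one further term of Stahl's pole asymptotics. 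As a cross-check, the same count comes out of the induced pole density $\sim(\sqrt{2N}/2\pi)\,t^{-3/2}$ near $|p|=t$, integrated from $t=1$ to $t=|p_{N,N}|\sim 8N/\pi^2$.

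The main obstacle is uniformity. The relation $p_{N-k,N}\sim -8N/((2k+1)^2\pi^2)$ used above is asserted in \cref{thm:large_poles} only for \emph{fixed} $k$ as $N\to\infty$, whereas the counting step needs it for $k$ growing like $\sqrt N$, i.e.\ all the way down to $|p_{j,N}|\approx1$. The real work is therefore to extract from~\cite[Theorem 2.2]{stahl1994rational} a bound on the pole locations that is uniform throughout the regime $|p_{j,N}|\ge1$, with an error term small enough that the discrete count is asymptotic to the continuum count and the rounding at the endpoint cannot disturb the $\sqrt N$ leading order. This can presumably be done alongside the computation in \cref{app2}; the point to verify is that Stahl's density and error estimates do not deteriorate too quickly as $|p|$ decreases toward $1$.
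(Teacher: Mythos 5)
You correctly identify the $\mathcal{O}(\sqrt{N})$ scaling, and you are right that the uniformity of~\eqref{eq:large_poles} in $k$ is the central issue; but your resolution of that issue goes the wrong way, and your proposed constant is incorrect. The paper does \emph{not} attempt to extend the pole asymptotic $p_{N-k,N}\sim -8N/((2k+1)^2\pi^2)$ down to $k\sim\sqrt N$. Instead it sidesteps the asymptotic entirely: since Stahl's counting function $H_N(y)$ in~\eqref{eq:H} is an explicit formula valid for \emph{all} $y>0$ (not just large $y$), the number of poles exceeding the threshold can be read off directly as $N-H_{2N}(1)$. One evaluates $H_{2N}(1)=\frac{2N+1}{2}-\sqrt{2N}\,F_1(1)-F_2(1)$ with the \emph{exact} integral $F_1(1)=\frac{1}{\pi}\int_1^\infty \frac{dt}{t\sqrt{1+t^2}}\approx 0.28$, giving $N-H_{2N}(1)\approx\sqrt{2}\cdot 0.28\,\sqrt{N}\approx 0.4\sqrt{N}$.

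Your calculation effectively replaces $F_1(1)$ by its large-$y$ leading term $1/(\pi y)\big|_{y=1}=1/\pi\approx 0.318$, which is precisely the approximation that breaks down at $y=1$. That is why you obtain $\sqrt{2}/\pi\approx 0.45$ rather than $0.4$; both your direct inversion and your density ``cross-check'' import the same error because both rest on~\eqref{eq:large_poles}, which was derived under the assumption of large argument. The discrepancy is therefore \emph{not} a transient ``slow onset of the asymptotic regime'' that an empirical fit exhibits — the figure $0.4$ is itself an asymptotic constant, and $0.45$ is systematically wrong. Carrying ``one further term of Stahl's pole asymptotics,'' as you suggest, would improve but not repair this: the clean fix is to stop expanding $F_1$ and simply evaluate it at $y=1$, which is what~\cref{app3} does. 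In short, the missing idea is to count by inverting the exact density $H_N$ at the threshold rather than by pushing the large-pole expansion into the regime where it fails.
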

\begin{proof}
    See \cref{app3}.
\end{proof}


\Cref{fig:minimaxpoles} displays the poles of the best approximation of $\sqrt{x}$ on $[0,1]$. We calculated these by computing the best rational approximation of degree $2N$ of $\lvert x \rvert$ on $[-1,1]$ using Chebfun's \texttt{minimax} and squaring the poles obtained. One clearly identifies the tapered lightning poles. The large poles are also tapered, in the sense that the spacing between them increases even on this logarithmic scale as we approach the singularity at infinity. Note that although the minimax approximation has ${\mathcal O}(\sqrt{N} \kern1pt)$ finite poles with magnitude bigger than 1, we get the same asymptotic convergence rate when substituting them by poles at $-\infty$, as described in \cref{sec:22}.

\begin{figure}
    \centering
    \includegraphics[width=.6\linewidth]{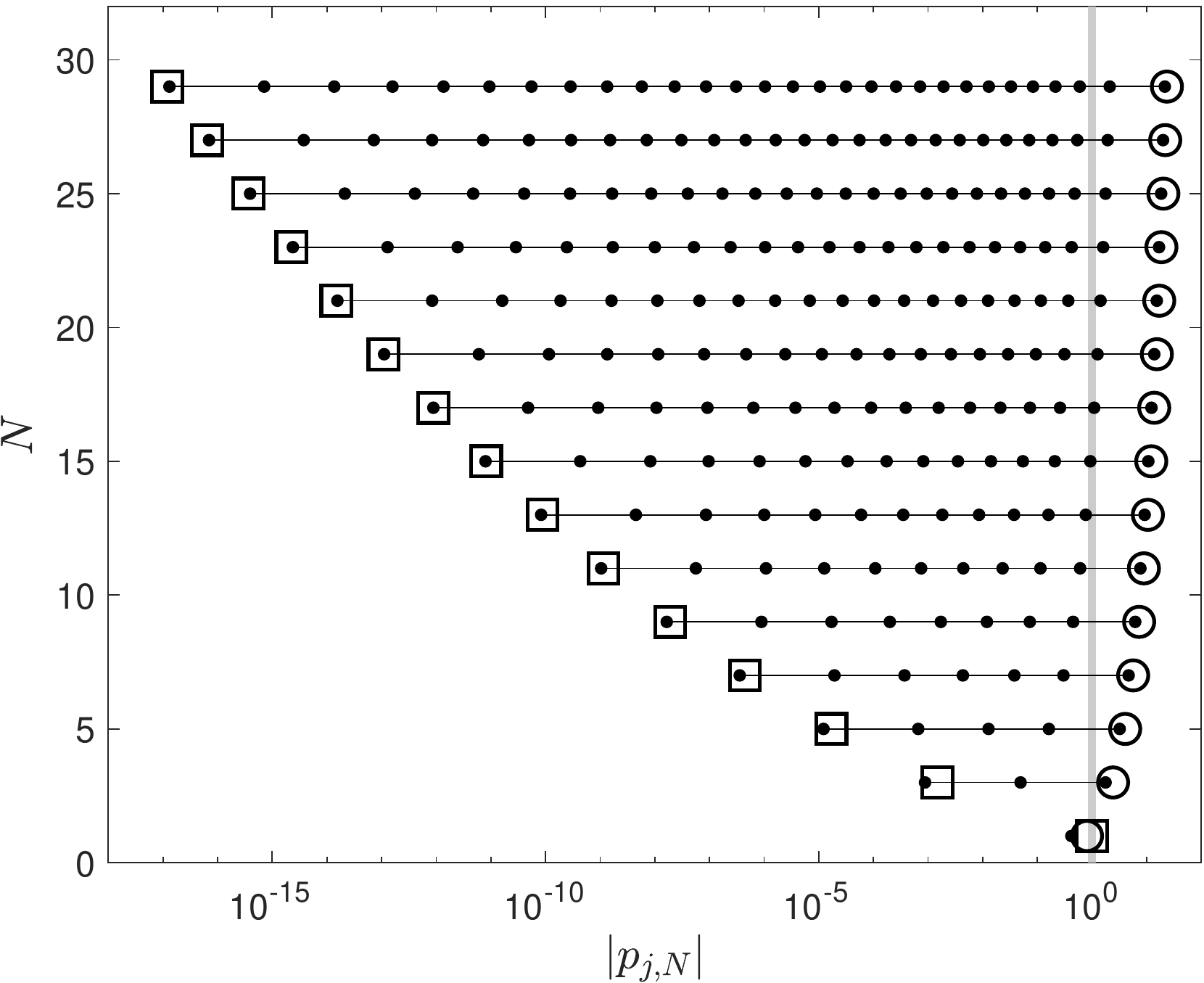}
    \caption{Magnitudes of the poles of the degree $N$ rational minimax approximation of $\sqrt{x}$ on $[0,1]$ for various $N$. Circles: $8N/\pi^2$ \cref{eq:large_poles}, squares: $\exp(-2\sqrt{2}\pi(\sqrt{N}-1))$ \cref{eq:taperedpoles}. The grey line at $\lvert p \rvert = 1$ emphasizes how few ``large poles'' there are with $\lvert p \rvert > 1$. }
    \label{fig:minimaxpoles}
\end{figure}

\section{\boldmath Approximation of $x^\alpha$ and $x^\alpha \log{x}$ on $[0,1]$}
\label{sec:3}
We do not know of an integral representation for $x^\alpha$ that leads to similar rational approximations. Yet the principles discussed in \cref{sec:2} apply to the approximation of branch point singularities in general. In this section we discuss the approximation of $x^\alpha$ (for $\alpha > 0$ and non-integer) and $x^\alpha \log(x)$ (for $\alpha > 0$) on $[0,1]$ numerically.

To this end, consider \cref{fig:twoparameter}, displaying the error of the least-squares approximation of $x^{\pi / 10}$ on $[0,1]$ using $N_1$ lightning poles and $N_2$ poles at infinity. A tapered exponential clustering (\ref{eq:taperedpoles}) on $[-1,0]$ with $\sigma = 2\sqrt{10\pi}$ is used. On the $x$-axis, the square root of the number of clustered poles $\sqrt{N_1}$ is displayed. The $y$-axis shows the degree of the added polynomial. One observes that the fastest convergence in $N$ is obtained for $N_2 \approx 1.1\sqrt{N_1} - 1$. This indicates that adding a polynomial of degree $N_2 = \mathcal{O}(\sqrt{N_1})$ is again optimal. Adding a polynomial with a higher degree does not improve the error.

\begin{figure}
    \centering
    \includegraphics[width=.6\linewidth]{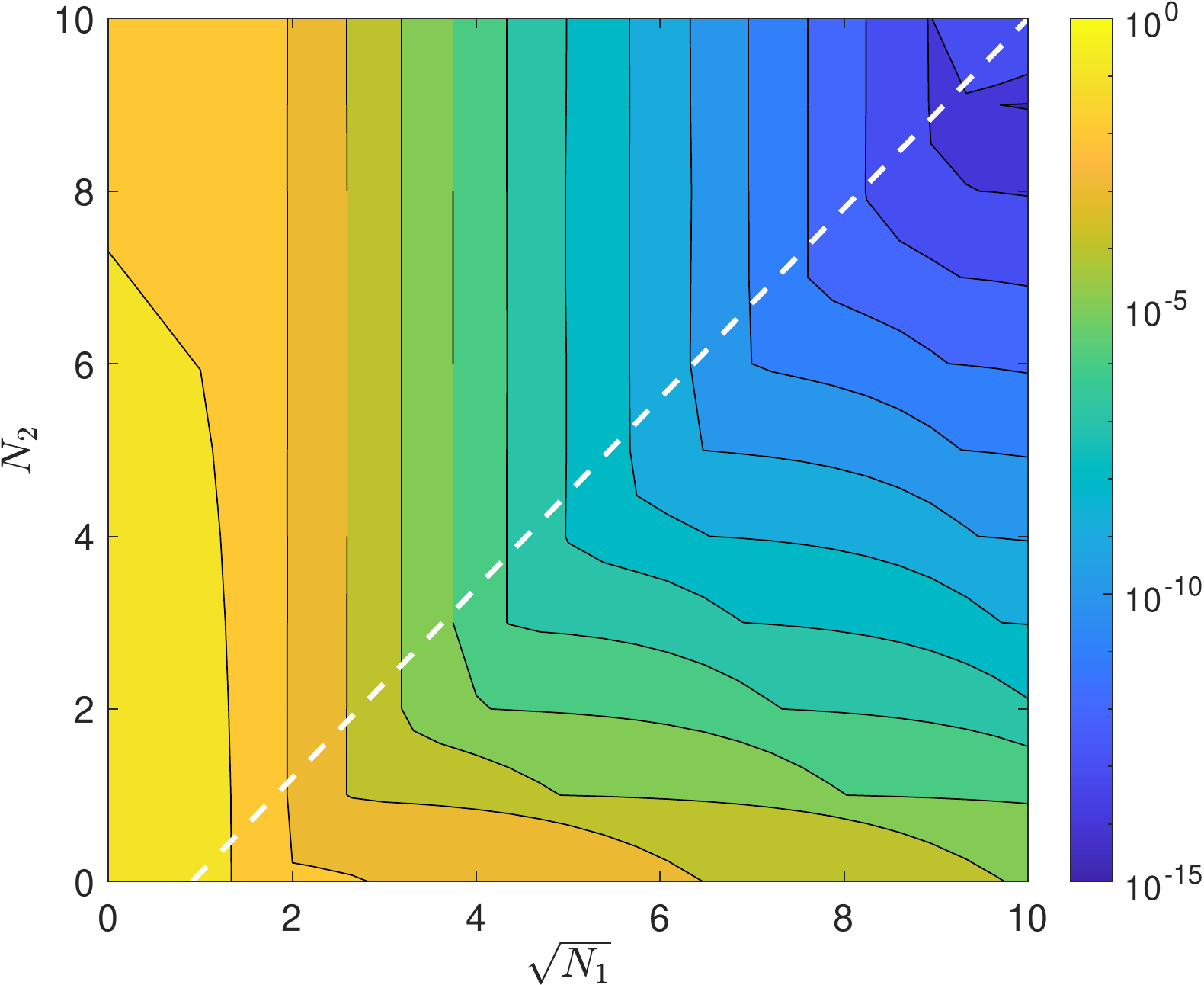}
    \caption{Max-norm error of the tapered lightning + polynomial approximation of $x^{\pi/10}$ on $[0,1]$ using $\sigma = 2\sqrt{10\pi}$. The dashed white line marks $N_2 = 1.1\sqrt{N_1} - 1$ and illustrates the regime of fastest convergence in $N$.}
    \label{fig:twoparameter}
\end{figure}

\Cref{fig:sigmadep} displays the errors of the lightning approximations of $x^{\pi / 10}$ on $[0,1]$ as functions of the clustering parameter $\sigma$. We use 10 partial fractions with tapered lightning poles \cref{eq:taperedpoles} on $[-1,0]$, a constant term and, optionally, a degree-3 polynomial. For the lightning + polynomial approximation the curve nicely illustrates the errors identified in the derivation of \cref{sec:22}.  With a fine spacing between the poles, the error is dominated by the truncation error, related to the smallest pole. In this regime, the errors of the lightning approximation behave almost identically. The pronounced V-curve of the lightning + polynomial approximation illustrates that a significant speedup can be obtained by optimizing the clustering parameter $\sigma$.

\begin{figure}
    \centering
    \includegraphics[width=.6\linewidth]{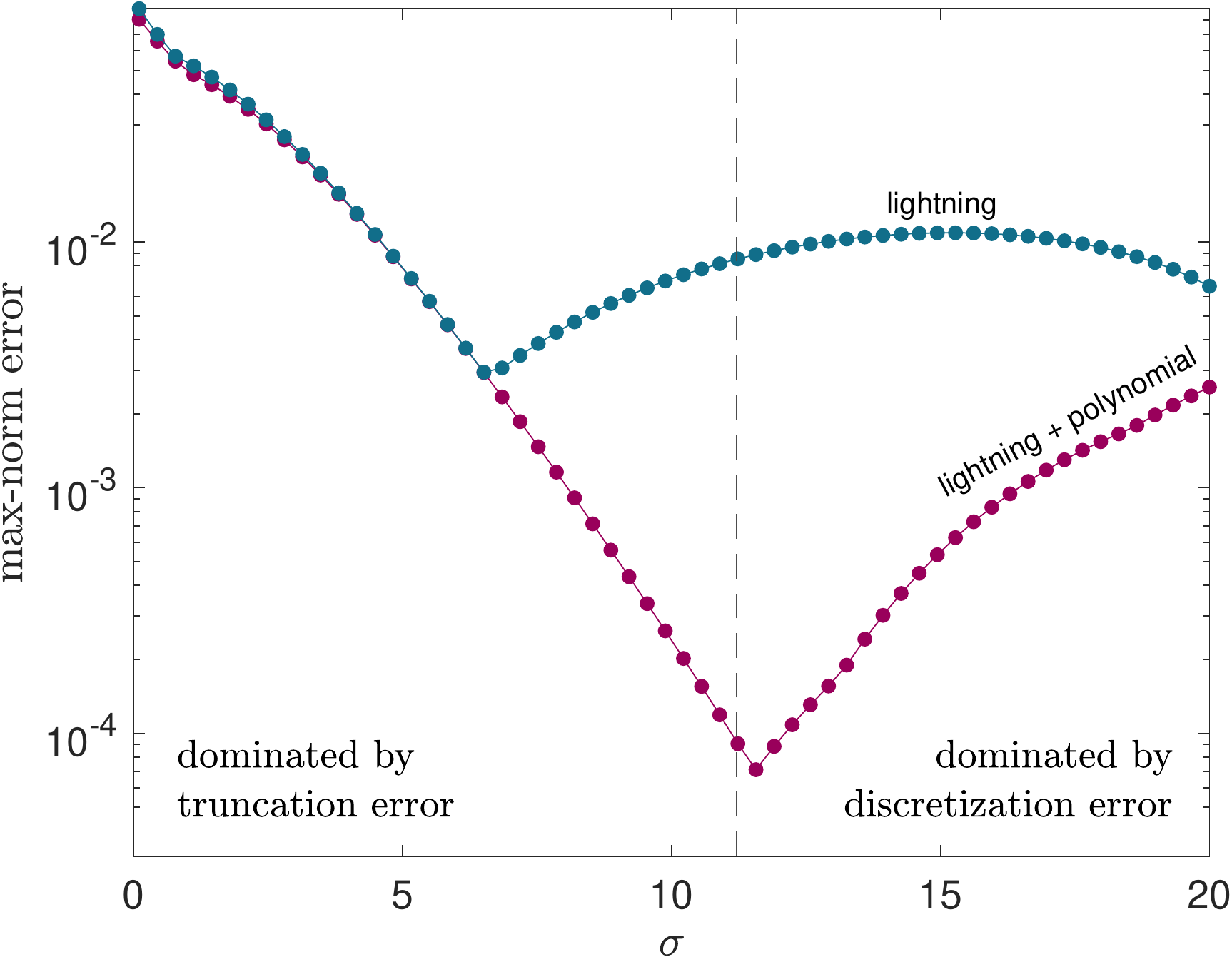}
    \caption{Max-norm error of the tapered lightning approximations as a function of the clustering parameter $\sigma$ for the approximation of $x^{\pi / 10}$ on $[0,1]$. The dashed line shows the conjectured result $\sigma = 2\pi/\sqrt{\alpha} = 2\sqrt{10\pi}$ \cref{conj:alpha}. The error regimes for the lightning + polynomial approximation follow from the derivation of \cref{sec:22}.}
    \label{fig:sigmadep}
\end{figure}

Since we do not know an integral representation for $x^\alpha$ from which to derive a similar rational approximation, the ``discretization error'' cannot be distinguished and an analogous error analysis as in \cref{sec:22} cannot be performed. Experimentally, one finds however that least-squares tapered lightning + polynomial approximations to $x^\alpha$ again converge at the rate of the minimax approximation for an optimal value of the spacing parameter $\sigma$. One can derive this optimal value for $\sigma$ by equating the truncation error, related to the size of the smallest pole, to the minimax error. The notion of a truncation error, as well as the idea of equating it to the best approximation error, follows from the work of Stahl \cite{stahlBestUniformRational1993, stahl1994rational, stahlBestUniformRational2003}:
\begin{equation}
    \text{Accuracy:} \qquad \varepsilon^\alpha \approx \exp(-\pi \sqrt{4 \alpha N} \kern1pt),
    \label{eq:optimalsigma}
\end{equation}
where $\varepsilon$ is the size of the smallest pole. For the tapered clustering (\ref{eq:taperedpoles}), we have $\varepsilon \approx \exp(-\sigma \sqrt{N})$. Therefore, assuming that the approximation achieves the best convergence rate, one can infer that the optimal clustering parameter $\sigma \approx 2\pi/\sqrt{\alpha}$. Numerical experiments show that the best approximation rate is indeed achieved for $\sigma = 2\pi/\sqrt{\alpha}$.
\begin{conjecture}
    There exist coefficients $\{a_j\}_{j=1}^{N_1}$ and a polynomial $b(x)$ with $N_2 = \mathcal{O}(\sqrt{N_1} \kern1pt)$, for which the lightning + polynomial approximation $r(x)$ \cref{eq:ratapprox} having tapered lightning poles \cref{eq:taperedpoles} with \begin{equation} \label{conj:alpha}
    \sigma = 2\pi/\sqrt{\alpha},
    \end{equation}
    satisfies:
    $$\vert r(x) - x^\alpha \kern1pt \rvert = \mathcal{O}\left(e^{-2\pi \sqrt{\alpha N}} \kern1pt\right)$$
    as $N \to \infty$, uniformly for $x \in [0,1]$.
\end{conjecture}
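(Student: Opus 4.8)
The plan is to reproduce the construction of \cref{sec:22} for a general exponent, with the special integral identity for $\sqrt{x}$ replaced by a classical Stieltjes-type representation. For non-integer $\alpha>0$, writing $k=\lceil\alpha\rceil$ and substituting $t=e^{s}$,
\[
 x^\alpha \;=\; \frac{1}{B(\alpha,\,k-\alpha)}\int_{0}^{\infty}\frac{x^{k}\,t^{\alpha-1}}{(x+t)^{k}}\,dt
 \;=\; \frac{1}{B(\alpha,\,k-\alpha)}\int_{-\infty}^{\infty}\frac{x^{k}\,e^{\alpha s}}{(x+e^{s})^{k}}\,ds ,
\]
with $B$ the Euler beta function; for $0<\alpha<1$ this is $x^\alpha=\frac{\sin\pi\alpha}{\pi}\int_{-\infty}^{\infty}xe^{\alpha s}(x+e^{s})^{-1}\,ds$, which for $\alpha=\half$ is the identity behind \cref{thm:trap} up to the substitution $s\mapsto 2s$. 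The integrand decays like $e^{\alpha s}$ as $s\to-\infty$ and like $x^{k}e^{(\alpha-k)s}$ as $s\to+\infty$. One then follows the four steps of \cref{sec:22}: (i) truncate the $s$-integral to a window $s\in[-T,T']$; (ii) apply the tapering substitution $s=\sqrt u-T$, so that the trapezoidal rule in $u=jh$ places poles at $p_j=-e^{\sqrt{jh}-T}$ (of multiplicity $k$); (iii) discretize; (iv) absorb the poles with $|p_j|>1$, the constant term, and the right-truncation remainder into a polynomial $b(x)$ of degree $N_2=\mathcal O(\sqrt{N_1}\kern1pt)$, exactly as in \cref{lemma:lp}. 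For $0<\alpha<1$ the poles are simple and of the form \cref{eq:ratapprox}; for $\alpha>1$ the order-$k$ poles must finally be replaced by simple poles at the same tapered locations (taken with multiplicity, or synthesized from consecutive clustered poles where these are close enough), which leaves $N=\mathcal O(N_1)$ unchanged.

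In step (ii) one takes $T=\sigma\sqrt{N_1}$, so that the first $N_1$ poles coincide with the tapered lightning locations \cref{eq:taperedpoles} (all with $|p_j|\le 1$) and the smallest pole has size $\approx e^{-\sigma\sqrt{N_1}}$. The near-singularity tail $\int_{-\infty}^{-T}$ then contributes an error of order $e^{-\alpha T}=e^{-\sigma\alpha\sqrt{N_1}}$ \emph{uniformly} in $x\in[0,1]$: it is the $k$-th power in the denominator that makes the full exponent $\alpha$ (rather than merely $\alpha-\lfloor\alpha\rfloor$) appear here when $\alpha>1$, and the factor $x^{k}$ in turn forces $r(0)=0$, which governs the error for $x$ below the smallest pole. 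Equating $e^{-\sigma\alpha\sqrt{N_1}}$ with the Stahl rate $e^{-2\pi\sqrt{\alpha N}\kern1pt}$ forces $\sigma\alpha=2\pi\sqrt\alpha$, i.e.\ $\sigma=2\pi/\sqrt\alpha$ --- precisely the value \cref{conj:alpha} and the balance \cref{eq:optimalsigma}. The substantive step is the analogue of \cref{thm:trap}: that with the matching trapezoidal stepsize the discretization error is also $\mathcal O(e^{-\alpha T})$. Step (iv) then goes through as in \cref{lemma:lp}: the poles with $|p_j|>1$, the constant, and the right-truncation remainder are analytic in a Bernstein ellipse $E_\rho$, $\rho=3+2\sqrt2$, about $[0,1]$ (the nearest of them, $p_{N_1+1}$, sitting just below $-1$), so a polynomial of degree $N_2=\mathcal O(\sqrt{N_1}\kern1pt)$ matches them to $\mathcal O(e^{-\alpha T})$. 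Since $N=N_1+N_2=\mathcal O(N_1)$ one obtains $|r(x)-x^\alpha|=\mathcal O(e^{-2\pi\sqrt{\alpha N}\kern1pt})$ uniformly on $[0,1]$; for $\alpha=\half$ this recovers \cref{thm:convergence}.

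The main obstacle is the one already flagged for \cref{thm:trap}: turning the assertion ``the discretization error is $\mathcal O(e^{-\alpha T})$'' into a rigorous bound. As in \cref{app1} one would write $r(x)-x^\alpha$ as a contour integral obtained from the residue form of the trapezoidal-rule error, deform the contour past the poles $p_j\subset(-\infty,0)$ while respecting the branch cut of $x^\alpha$ on $(-\infty,0]$ --- whose structure is no worse than for $\sqrt x$, so the argument of \cref{app1} should transfer --- and bound the resulting integrand. That bound is the precise analogue of hypothesis \cref{conj:bound}, which is itself still conjectural for $\alpha=\half$; hence any proof along these lines would remain conditional on it. Two further points require care rather than new ideas: for $\alpha>1$ one must verify that the order-$\lceil\alpha\rceil$ poles can indeed be reduced to simple poles at the tapered locations without spoiling the rate (consecutive clustered poles differ by a factor $1+\mathcal O(j^{-1/2})$, so the reduction is delicate only very close to the singularity), and the clean identity $N_t=4N_1$ special to $\alpha=\half$ is replaced by $N_t=\mathcal O(N_1)$ with an $\alpha$-dependent constant, which does not affect the convergence rate since only $N=\mathcal O(N_1)$ enters it.
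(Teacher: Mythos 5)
The statement you are trying to prove is labelled a \emph{conjecture} in the paper; the authors explicitly write that they ``do not know of an integral representation for $x^\alpha$ that leads to similar rational approximations,'' and they derive \cref{conj:alpha} only heuristically by balancing $\varepsilon^\alpha$ against the Stahl rate, supporting it numerically. Your beta-function representation is therefore a real contribution to the problem, and your rederivation of $\sigma=2\pi/\sqrt\alpha$ from the left-tail truncation error $e^{-\alpha T}$ is sound and matches the paper's own heuristic \cref{eq:optimalsigma}. For $0<\alpha<1$ the scheme does parallel \cref{sec:22} closely: $k=1$, simple poles, and the proof would be conditional on an unproven bound analogous to \cref{conj:bound}, which you correctly flag.

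The genuine gap is the case $\alpha>1$, and it is almost certainly why the authors wrote ``similar rational approximations'' and did not pursue the beta integral. With $k=\lceil\alpha\rceil\ge 2$ the trapezoidal discretization of $\int x^k e^{\alpha s}(x+e^s)^{-k}\,ds$ produces poles of order $k$ at each $p_j=-e^{\sqrt{jh}-T}$; its partial-fraction form contains terms $c_{j,m}(x-p_j)^{-m}$ for $m=2,\dots,k$, which simply are not of the form \cref{eq:ratapprox}. The conjecture, however, requires \emph{simple} poles located exactly at the tapered lightning positions \cref{eq:taperedpoles}, so ``take the multiplicity and move on'' is not available. Your two proposed repairs do not close this. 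Replacing $c(x-p_j)^{-k}$ by a single simple pole at $p_j$ cannot match the local behaviour, and synthesizing an order-$k$ pole from neighbouring tapered poles relies on those poles being close in a relative sense; but consecutive tapered poles satisfy $p_{j+1}/p_j = e^{\sigma/(2\sqrt j)+O(j^{-3/2})}$, which is a \emph{large} fixed ratio precisely for small $j$ --- i.e.\ near the origin, which is exactly where the pole spacing controls the accuracy $\varepsilon^\alpha$ that sets the rate. A bound showing that this substitution costs only $\mathcal O(e^{-\alpha T})$ is a second substantive theorem, on top of the contour-integral bound, and nothing in the proposal sketches it. Until that is supplied, the argument establishes at best the $0<\alpha<1$ case (conditionally), and the general statement remains open, as the paper asserts.
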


The minimizer of the V-curve displayed in \cref{fig:sigmadep} can be estimated numerically. \Cref{fig:typedependency} shows this empirically optimal value of $\sigma$ for the approximation of $x^\alpha$ on $[0,1]$, as a function of the type of the singularity $\alpha$. Here, $10$ tapered lightning poles and a degree-10 polynomial are used. The figure shows that the empirically optimal clustering parameter $\sigma$ is approximately equal to the conjectured value \cref{conj:alpha}. Analogous experiments show that the optimal clustering parameter $\sigma$ behaves similarly for logarithmic singularities of type $x^\alpha \log{x}$ ($\alpha > 0$). 

\begin{figure}
    \centering
    \includegraphics[width=.6\linewidth]{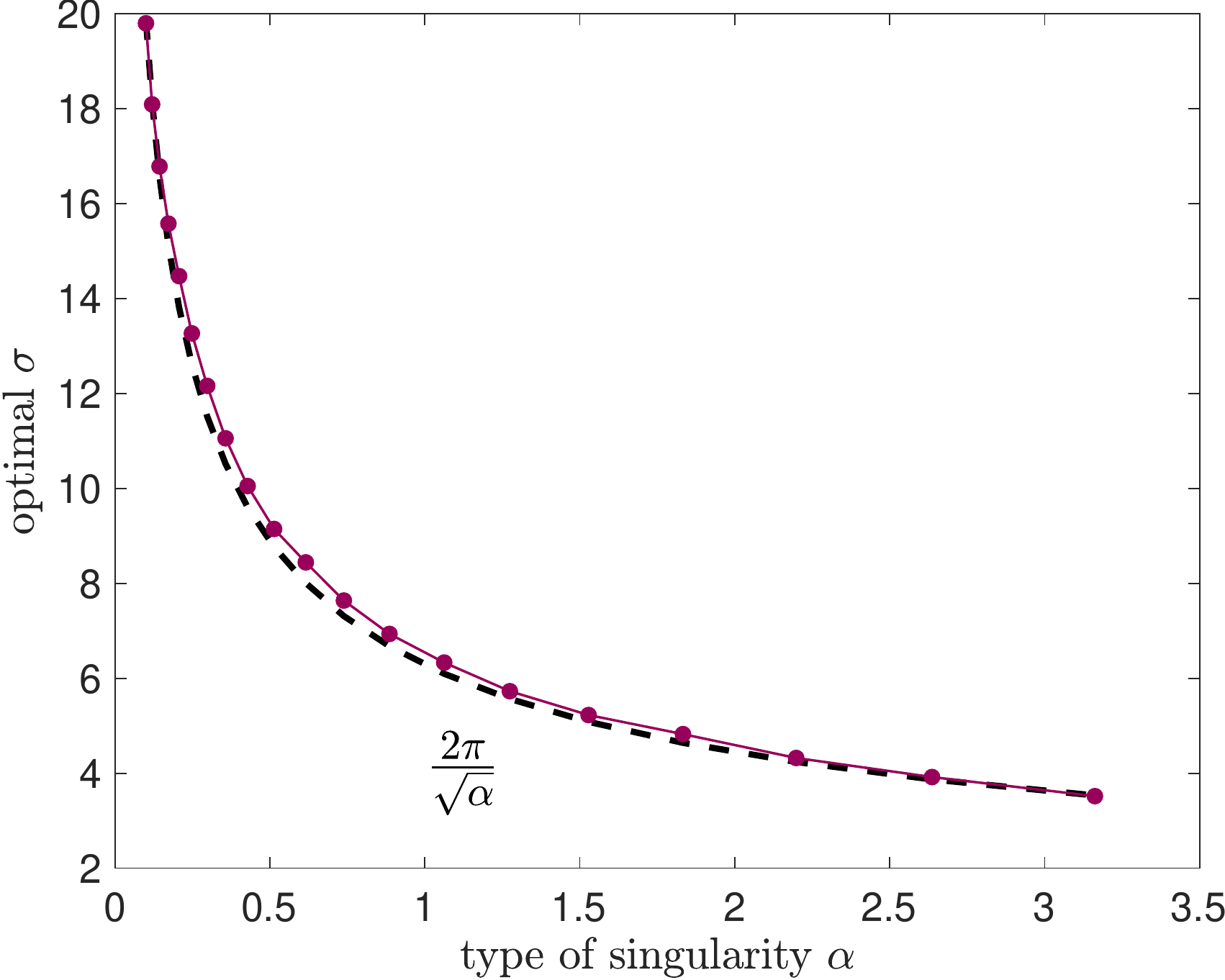}
    \caption{Optimal clustering parameter $\sigma$ as a function of the type of the singularity for the approximation of $x^\alpha$ on $[0,1]$. The dots display the value for each $\sigma$ that minimizes the max-norm error of the lightning + polynomial approximation using $N_1 = N_2 = 10$. The dashed line shows the conjectured value \cref{conj:alpha}.}
    \label{fig:typedependency}
\end{figure}

\section{Ill-conditioning and the size of the coefficients}
\label{sec:4}
The lightning method uses least-squares fitting to find accurate rational approximants:
\begin{equation}
    A\boldsymbol{c} \approx \boldsymbol{f}, \qquad A_{ij} = \phi_j(t_i), \; \boldsymbol{f}_i = f(t_i), 
    \label{eq:ls}
\end{equation}
with $\{t_i\}_{i=1}^M$ the sampling points in a domain $\Omega$ and the approximation set $\{\phi_j\}_{j=0}^N$ consisting of $N_1$ partial fractions with lightning poles $\{\frac{p_j}{z-p_j}\}_{j=1}^{N_1}$ and a polynomial basis of degree $N_2$. Note that the lightning basis functions are scaled to have unit max-norm on $[0,1]$. It is observed that the least-squares systems \cref{eq:ls} are in general heavily ill-conditioned. However, accurate approximations are still found using linear oversampling and standard regularization. 

This numerical phenomenon is related to redundancy in the approximation set, which was recently investigated in \cite{fna1,adcockFramesNumericalApproximation2020a}. The main conclusion of these works is that if the basis functions are sampled finely enough and the least-squares problem is solved with effective regularization at level $\epsilon$, then the computed fit has a residual that exceeds the mathematically minimal residual by an amount dominated by a term of order $\epsilon \norm{\mathbf{c}}_2$, where $\norm{\mathbf{c}}_2$ is the 2-norm of the vector of expansion coefficients. This theory is made precise for regularization by truncated singular value decomposition (TSVD) in \cite[Theorem 1.3]{adcockFramesNumericalApproximation2020a} by:
\begin{equation}
\small
    \norm{f-P^{\epsilon}_{M,N}f}_{\mathcal{H}} \leq \\ \inf_{\mathbf{c} \in \mathbb{C}^{N+1}} \left( \norm{f - \sum_{j=0}^N c_j\phi_j}_{\mathcal{H}} + \kappa_{M,N}^{\epsilon} \norm{A \mathbf{c} - \mathbf{f}}_2 + \epsilon \lambda_{M,N}^{\epsilon} \norm{\mathbf{c}}_2 \right)
    \label{eq:fna}
\end{equation}
in which $\kappa_{M,N}^{\epsilon}$ and $\lambda_{M,N}^{\epsilon}$ are coefficients that can be shown to be not very large if the samples are sufficiently dense. In the lightning Laplace code \cite{lightninglaplace}, linear oversampling by a factor of 3 is used and the sample points are exponentially clustered in a similar fashion as the lightning poles. For the numerical experiments on $[0,1]$ in the present paper, a fixed exponentially graded grid (\texttt{logspace(-16,0,2000)}) is used for simplicity.

The theory indicates that, in addition to inspecting the error, it is informative to inspect the norm of the coefficient vector. The trapezoidal rule construction in \cref{sec:22} shows that there exists an accurate lightning + polynomial approximation of $\sqrt{x}$ with bounded coefficients. This is the case generally for the approximation of branch point singularities, when the lightning poles are combined with a sufficient number of smooth basis functions. When the coefficients in the smooth basis are bounded as well, it now follows that accurate approximations can be found by least-squares fitting despite the ill-conditioning of the system matrix. In the lightning Laplace code \cite{lightninglaplace}, a discretely orthogonalized polynomial basis is used, constructed with the Vandermonde with Arnoldi algorithm \cite{vwa}. For the model problems in the present paper, associated with bounded intervals, we can use a basis of orthogonal polynomials scaled to that interval.

In \cref{fig:bigpoles}, the convergence behaviour as well as the 2-norm of the coefficient vector is compared for the tapered lightning approximations of $\sqrt{x}$ on $[0,1]$. The lightning poles are augmented with a polynomial term or poles clustering towards infinity, as discussed in \cref{sec:23}. The degree of the polynomial, as well as the number of large poles, is fixed at $N_2 = \texttt{ceil}(1.3 \sqrt{N_1}\kern1pt)$. The poles clustering towards infinity are constructed similarly to the asymptotics described in \cref{thm:large_poles}: 
$$p_i = -8N/((2i+1)^2\pi^2), \; 1 \leq i \leq N_2.$$ This approximation also converges at the best rate. For this experiment, a TSVD solver with threshold \texttt{2e-14} is used, resulting in an achievable error of order $\mathcal{O}(\epsilon \norm{\mathbf{c}}_2)$, as predicted by \cref{eq:fna}. Since the 2-norm of the coefficient vector is smallest for the lightning + polynomial approximation, it achieves the highest accuracy. Note that we do not know the optimal distribution of large poles for functions with more complex smooth behaviour, nor whether they result in bounded coefficients. We believe that an approximation set containing lightning poles and an orthogonal polynomial basis is a simple and robust choice.

\begin{figure}
\begin{minipage}{.47\linewidth}
    \centering
    \includegraphics[width=\linewidth]{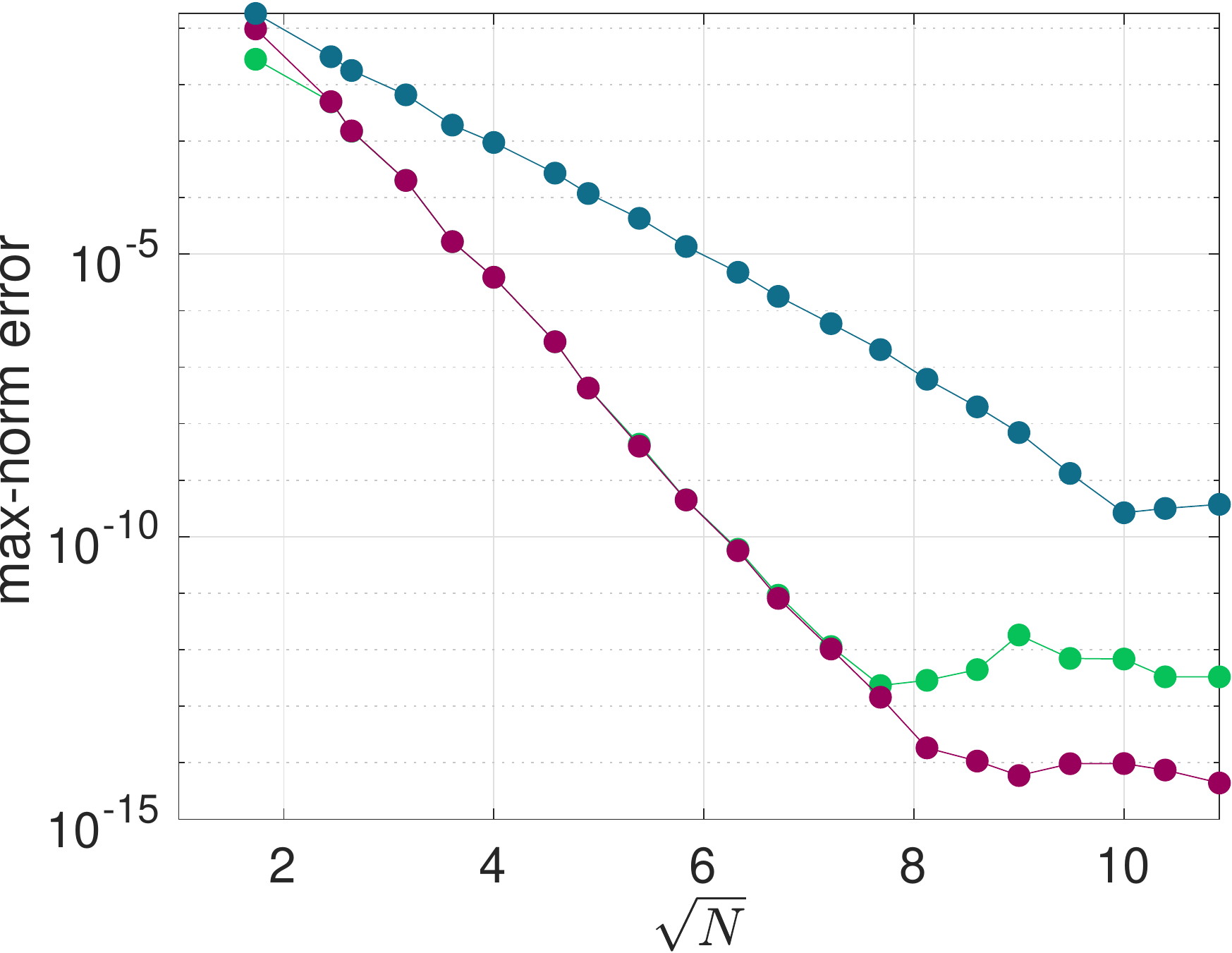}
\end{minipage}
\begin{minipage}{0.02\linewidth}
\hspace{0.02\linewidth}
\end{minipage}
\begin{minipage}{.47\linewidth}
    \centering
    \includegraphics[width=\linewidth]{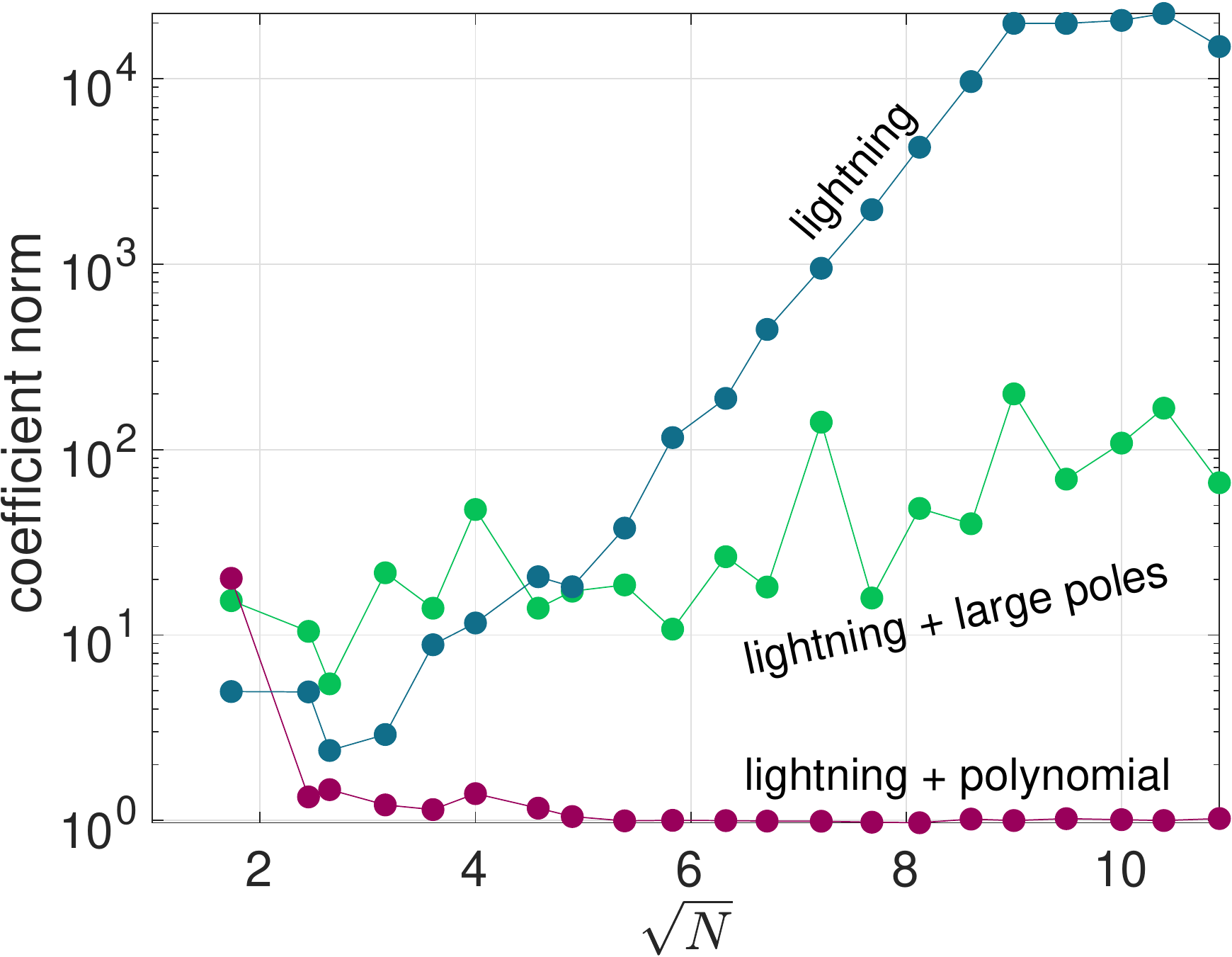}
\end{minipage}
\caption{Comparison of the lightning approximations of $\sqrt{x}$ on $[0,1]$. The lightning poles are augmented with a polynomial term or poles clustering towards infinity, as described in \cref{sec:22}. Left: max-norm error, right: $2$-norm of the coefficient vector.}
\label{fig:bigpoles}
\end{figure}

\section{More general approximation and PDE problems}
\label{sec:5}
In general, the lightning method can be used to approximate functions containing branch point singularities on curves in the complex plane. This idea led to the development of ``lightning solvers'', which approximate the solutions of PDEs on domains in the plane that are bounded by piecewise smooth Jordan curves with corners, such as polygons \cite{gopalSolvingLaplaceProblems2019}. These solutions exhibit singular behaviour near the corners. An example of such a domain is displayed in \cref{fig:house} (left). The lightning poles, marked by red dots, are exponentially clustered near the corners along the bisectors. Also, a global polynomial term in the complex variable has been included in the approximation set.

Trefethen and Gopal first developed a lightning method for the Laplace equation and provided an implementation in MATLAB \cite{lightninglaplace}. Based on numerical experiments, they proposed the parameter value $\sigma = 4$ in \cref{eq:taperedpoles} for the tapered distribution of the clustered poles near the corners. Later, this value for $\sigma$ was also used in \cite{trefethenExponentialNodeClustering2021}, an in-depth analysis of tapered pole distributions, and in \cite{brubecklightningStokesSolver2022}, a lightning method for 2D Stokes flow. This parameter value has proved strangely hard to beat. In this section, this phenomenon is analysed, resulting in an explanation for the near-optimality of $\sigma = 4$ for corner singularities of PDEs.

\begin{figure}
\begin{minipage}{.46\linewidth}
    \centering
    \includegraphics[width=\linewidth]{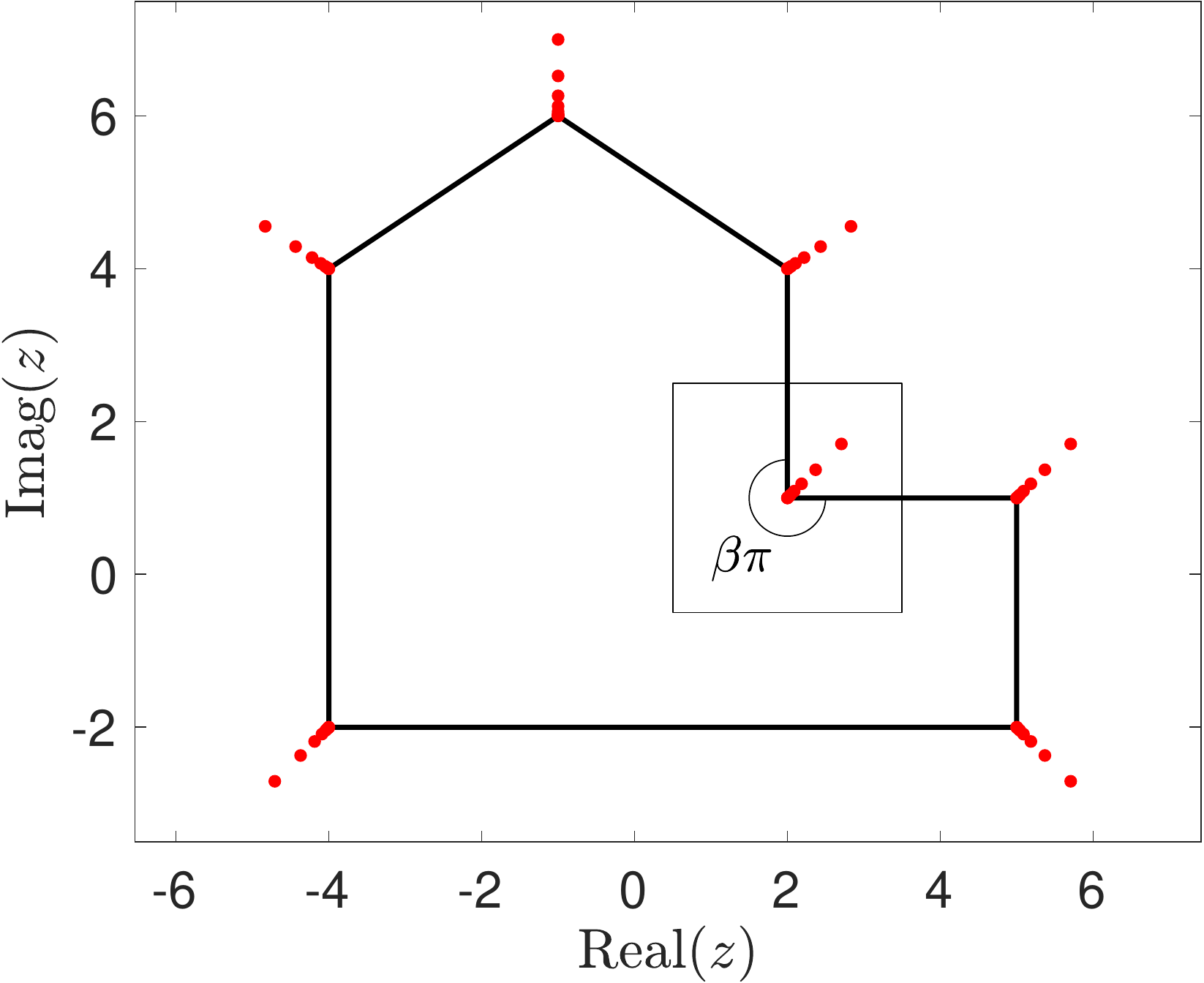}
\end{minipage}
\begin{minipage}{0.04\linewidth}
\hspace{0.04\linewidth}
\end{minipage}
\begin{minipage}{.46\linewidth}
    \centering
    \includegraphics[width=\linewidth]{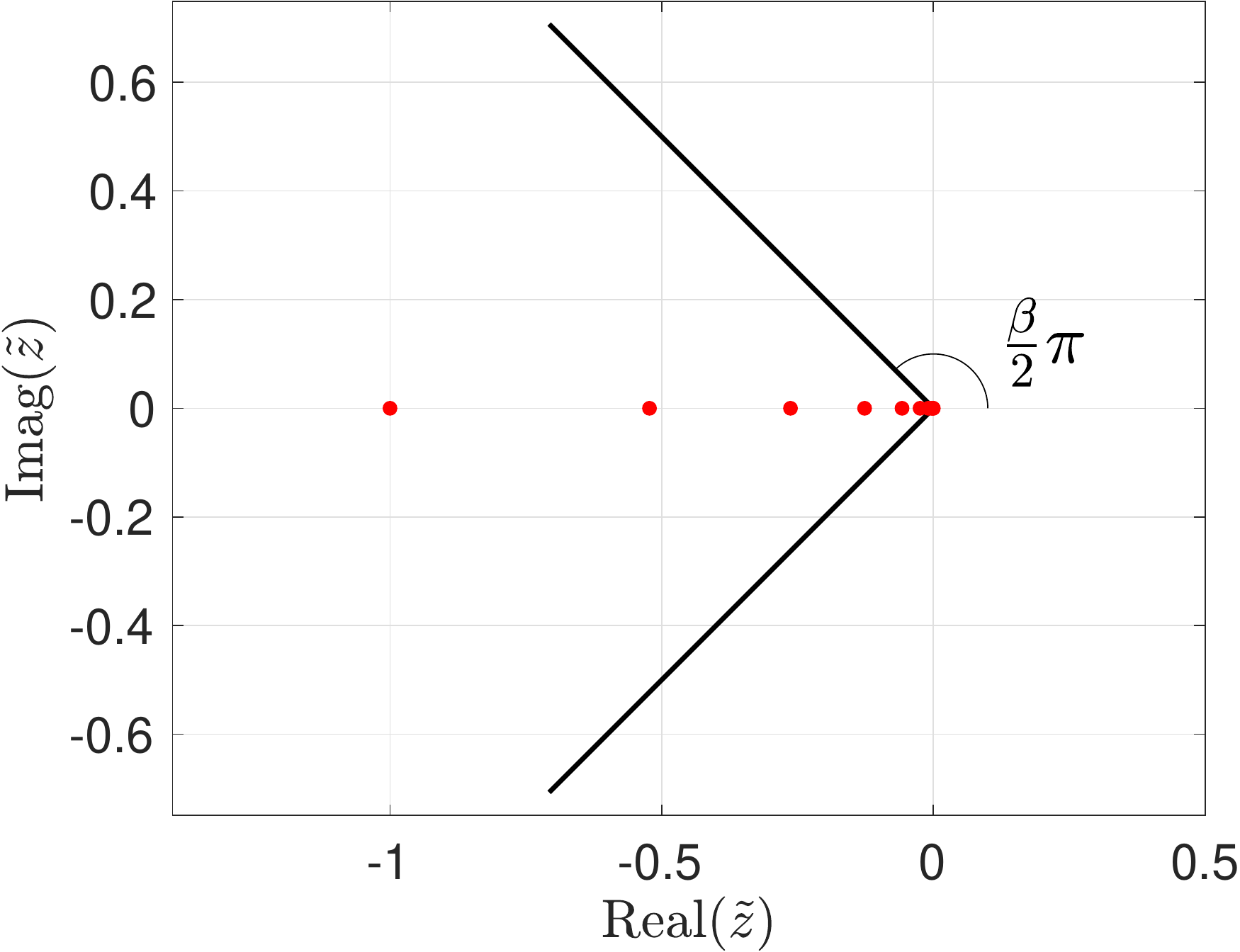}
\end{minipage}
\caption{A more general approximation problem in the complex plane (left) and the local approximation problem near the reentrant corner (right). The red dots mark the locations of the poles clustered near the corners of the computational domain. The local problem consists of an approximation domain composed of two unit intervals rotated by the angles $\beta\pi/2$ and $-\beta\pi/2$ in the plane, referred to as a V-shaped domain.}
\label{fig:house}
\end{figure}

\subsection{\boldmath Approximation of $x^\alpha$ on a V-shaped domain}

One can decompose the general approximation problem depicted in \cref{fig:house} (left) into multiple local approximation problems near the corners and a global smooth problem. For an explanation of how to make the decompositions rigorous by Cauchy integrals, see \cite[Thm.\ 2.3 and Fig.\ 3]{gopalSolvingLaplaceProblems2019}. \Cref{fig:house} (right) displays the local problem associated with the reentrant corner $\beta\pi$ of the original domain. Using a local complex variable, the poles clustering towards the corner are again positioned on the negative real axis. The problem greatly simplifies if the influence of the poles clustering towards other corners is thereby neglected and only the sample points close to the corner are taken into account. Note that the approximation interval has now been rotated to a pair of intervals closer to the poles. This has a significant influence on the achievable accuracy.

The problem in \cref{fig:house} (right) resembles the problems on the unit interval examined in previous sections. However, the approximation domain now consists of two unit intervals rotated by the angles $\beta\pi/2$ and $-\beta\pi/2$ in the plane. We will refer to this type of approximation domain as a ``V-shaped domain''. Again, we first analyse the lightning + polynomial approximation of $\sqrt{z}$ obtained using the construction introduced in \cref{sec:22}, based on applying the trapezoidal rule to \cref{eq:inteq}.

\begin{theorem} \sloppy
    The rational approximation \cref{eq:fulltrap} with $h = (2-\beta)\pi^2$ satisfies:
    $$\lvert r_t(z) - \sqrt{z} \kern1pt \rvert  = \mathcal{O}(e^{-\sqrt{N_th/4}}\kern1pt) = \mathcal{O}(e^{-\pi\sqrt{(2-\beta)N_t/4}} \kern1pt)$$
    as $N_t \to \infty$, uniformly for
    $z~\in~[0,1]e^{\pm i \beta\pi/2}, \; \beta \in [0,2)$, assuming the numerical bound introduced in \cref{conj:bound2} holds.
    \label{thm:trap2}
\end{theorem}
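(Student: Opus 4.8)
The plan is to follow the contour-integral argument used for \cref{thm:trap} in \cref{app1} essentially verbatim, the only real change being the way the singularities of the integrand in \cref{eq:inteq} migrate as $z$ leaves the positive real axis. First I would observe that the integral representation $\sqrt z=\tfrac{2z}{\pi}\int_{-\infty}^{\infty}e^{s}\big(e^{2s}+z\big)^{-1}\,ds$, with the principal branch of $\sqrt z$, remains valid for every $z\in\complex\setminus(-\infty,0]$, hence on the two rays $z\in[0,1]e^{\pm i\beta\pi/2}$ with $\beta\in[0,2)$; the rational function $r_t(z)$ of \cref{eq:fulltrap} is likewise well defined there, since its poles lie on $(-\infty,0]$. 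As on $[0,1]$, the difference $r_t(z)-\sqrt z$ then decomposes into a truncation term (the tail of the integral outside $|s|\le T$, i.e.\ $u\notin(0,4T^2)$) plus a discretization term (the error of the trapezoidal rule applied to the truncated integral \cref{eq:inteq}).

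For the truncation term, the quantity $|z|\,\big|e^{2s}+z\big|^{-1}$ is bounded along $s\in\mathbb R$ by a constant $c(\beta)$ — the reciprocal of the appropriately scaled distance from the rays to the negative real axis — which is finite for each fixed $\beta<2$ and grows like $1/(2-\beta)$ as $\beta\uparrow2$. Hence $\tfrac{2z}{\pi}\int_{|s|>T}e^{s}\big(e^{2s}+z\big)^{-1}\,ds=\mathcal O(e^{-T})$ uniformly for $z$ on the V, with an implied constant depending only on $\beta$.

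The discretization term is where the opening angle enters. Writing the integrand of \cref{eq:inteq} as $g(u;z)=\tfrac{1}{2\sqrt u}\,e^{\sqrt u-T}\big(e^{2(\sqrt u-T)}+z\big)^{-1}$, I would represent the trapezoidal error at the nodes $u=jh$ by a contour integral of $g(\cdot;z)$ against a periodic kernel with simple poles at the $jh$, and deform the contour onto horizontal lines $\Im u=\pm d$, which introduces a factor $e^{-2\pi d/h}$. The poles of $g(\cdot;z)$ satisfy $e^{2(\sqrt u-T)}=-z$; with $w=\sqrt u-T$ this gives $\Im w=\arg(-z)/2+\pi k$, so for $\arg z=\beta\pi/2$ the pole closest to the real $u$-axis has $\Im w=(\beta-2)\pi/4$ and therefore, as $T\to\infty$, $\Im u\approx 2T\,\Im w=-(2-\beta)\pi T/2$ (and $+(2-\beta)\pi T/2$ for the conjugate ray $\arg z=-\beta\pi/2$). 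Choosing $d$ just inside $(2-\beta)\pi T/2$ gives a discretization bound of order $e^{-2\pi d/h}=e^{-(2-\beta)\pi^2 T/h+o(T)}$.

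Balancing the two contributions forces $(2-\beta)\pi^2/h=1$, i.e.\ $h=(2-\beta)\pi^2$, after which both terms are $\mathcal O(e^{-T})$; substituting $T=\sqrt{N_th/4}=\pi\sqrt{(2-\beta)N_t/4}$ yields the claimed rate. The main obstacle, exactly as for \cref{thm:trap}, is to make the contour estimate uniform over all $z$ on the V: near the corner $|z|$ is small, so the branch point of $\sqrt u$ at $u=0$ (equivalently the truncation point $s=-T$) lies close to the deformed contour and the left endpoint must be treated with care, and near $\beta=2$ the strip width $(2-\beta)\pi T/2$ and the distance from $z$ to $(-\infty,0]$ both shrink. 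Controlling the resulting integrals uniformly is precisely what the numerical bound \cref{conj:bound2} asserts; granting it, the rest is the routine transcription of \cref{app1} with the relocated poles.
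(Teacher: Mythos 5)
Your proposal follows essentially the same route as the paper's Appendix~D: transcribe the contour-integral machinery of Appendix~A, track how the poles $e^{2(\sqrt u-T)}=-z$ move when $\arg z=\pm\beta\pi/2$, and balance the discretization rate $e^{-2\pi\,\Imm\,\pi_+/h}$ against the truncation rate $e^{-T}$ to land on $h=(2-\beta)\pi^2$; you also correctly isolate the uniformity issue that \cref{conj:bound2} is meant to absorb, and your computation that the nearest pole has $\Imm u\approx(2-\beta)\pi T/2$ agrees with the paper's \cref{lem:poles2}. The one tactical difference is that you propose halting the deformation just \emph{inside} the nearest pole (so the whole error is a contour integral on $\Imm u=\pm d$), whereas the paper pushes the rectangle \emph{past} $\pi_\pm$, collects the residues $-2\pi i\,r_\pm\delta(\pi_\pm)$ explicitly (these give the exact rate after the $\sqrt z$ factors cancel), and then relegates the remaining integral to the conjecture; your variant yields the same asymptotic rate but would require an extra bound on $|f|$ near the pole, which the paper's residue bookkeeping sidesteps.
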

\begin{proof}
    See \cref{app4}.
\end{proof}

Similarly to \cref{thm:convergence}, this can be linked to the convergence behaviour of the lightning + polynomial approximation.

\begin{theorem}\label{thm:convergence2}
    There exist coefficients $\{a_j\}_{j=1}^{N_1}$ and a polynomial $b(z)$ of degree $N_2 = \mathcal{O}(\sqrt{N_1} \kern1pt)$, for which the lightning + polynomial approximation $r(z)$ \cref{eq:ratapprox} having tapered lightning poles \cref{eq:taperedpoles} with 
    \begin{equation} \label{eq:thm51sigma}
    \sigma = 2\sqrt{2-\beta}\pi
    \end{equation}
    satisfies:
    $$\vert r(z) - \sqrt{z} \kern1pt \rvert = \mathcal{O}(e^{-\pi \sqrt{(2-\beta)N}} \kern1pt)$$
    as $N \to \infty$, uniformly for
    $z \in [0,1]e^{\pm i \beta\pi/2}, \; \beta \in [0,2)$, under the same condition as \cref{thm:trap2}.
\end{theorem}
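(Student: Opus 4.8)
The plan is to run exactly the argument of \cref{thm:convergence}, with \cref{lemma:lp} replaced by its V-shaped analogue. Write $V_\beta = [0,1]e^{i\beta\pi/2}\cup[0,1]e^{-i\beta\pi/2}$. By \cref{thm:trap2} (granting the bound of \cref{conj:bound2}), the trapezoidal-rule approximant $r_t$ of \cref{eq:fulltrap} with $h=(2-\beta)\pi^2$ satisfies $|r_t(z)-\sqrt z|=\mathcal O(e^{-\sqrt{N_th/4}})$ uniformly on $V_\beta$. The formula \cref{eq:fulltrap} is unchanged — only the numerical value of $h$ differs — so the partial-fraction data are those already computed in the proof of \cref{lemma:lp}: with $N_t=4N_1$ the first $N_1$ poles are precisely the tapered lightning poles \cref{eq:taperedpoles} with $\sigma=2\sqrt h=2\sqrt{2-\beta}\,\pi$ (which is \cref{eq:thm51sigma}), while the remaining $3N_1$ poles all have $|p_j|>1$, the smallest being $p_{N_1+1}=-\exp(-2\sqrt h(\sqrt{N_1}-\sqrt{N_1+1}\kern1pt))<-1$.

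First I would gather those $3N_1$ large poles together with the constant term into $g(z)=\sum_{j=N_1+1}^{4N_1} a_j/(z-p_j)+C$, as in \cref{eq:bN2}. This $g$ is analytic on $\complex\setminus(-\infty,p_{N_1+1}]$, and the key geometric fact is that for every fixed $\beta\in[0,2)$ the compact set $V_\beta$ is disjoint from $(-\infty,-1]$ (a ray $\{te^{\pm i\beta\pi/2}:t\ge 0\}$ meets the negative real axis only at the origin when $\beta\ne 2$), hence $g$ extends analytically to a fixed neighbourhood of $V_\beta$. By Bernstein–Walsh theory, polynomial approximation on $V_\beta$ then converges geometrically: there is a $\rho=\rho(\beta)>1$, governed by the conformal/potential-theoretic separation of $V_\beta$ from the nearest singularity of $g$ on $(-\infty,-1]$, and a polynomial $b(z)$ of degree $N_2$ with $\max_{z\in V_\beta}|g(z)-b(z)|=\mathcal O(\rho^{-N_2})$. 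Choosing $N_2\ge\sqrt{N_th/4}\,/\log\rho(\beta)=\mathcal O(\sqrt{N_1}\kern1pt)$ makes this error $\mathcal O(e^{-\sqrt{N_th/4}})$, the same order as the error in \cref{thm:trap2}. Setting $r(z)=\sum_{j=1}^{N_1}a_j/(z-p_j)+b(z)$, a lightning + polynomial approximation of the form \cref{eq:ratapprox} with the stated poles, the triangle inequality gives $|r(z)-\sqrt z|\le|r(z)-r_t(z)|+|r_t(z)-\sqrt z|=\mathcal O(e^{-\sqrt{N_th/4}})$ uniformly on $V_\beta$. Since $h=(2-\beta)\pi^2$ and $N_1=N_t/4$ give $\sqrt{N_th/4}=\pi\sqrt{(2-\beta)N_1}$, and the total degree is $N=N_1+N_2=\mathcal O(N_1)$, this is $\mathcal O(e^{-\pi\sqrt{(2-\beta)N}})$, as claimed; for $\beta=0$ one recovers \cref{thm:convergence}.

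The main obstacle is the polynomial-approximation step. Unlike \cref{lemma:lp}, where $[0,1]$ is a single interval and one reads off the Bernstein ellipse $E_\rho$ directly, here $V_\beta$ is a union of two segments, so the geometric rate $\rho(\beta)$ must come from Bernstein–Walsh estimates for the compact set $V_\beta$ (equivalently, from the Green's function of $\complex\setminus V_\beta$, or by enclosing $V_\beta$ in a Jordan region that avoids $(-\infty,-1]$). One must also track uniformity: as $\beta\uparrow 2$ the arm-tip $e^{i\beta\pi/2}$ approaches the pole location $-1$, so $\rho(\beta)\downarrow 1$ and the implied constant — like the one already present in \cref{thm:trap2} — blows up; accordingly the estimate is uniform in $z$ for each fixed $\beta$, i.e.\ uniform on compact subsets of $\beta\in[0,2)$.
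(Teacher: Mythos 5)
Your proposal is correct and follows the route the paper implies by ``Similarly to \cref{thm:convergence}'': combine \cref{thm:trap2} with a V-shaped analogue of \cref{lemma:lp} and close with the triangle inequality, noting $N=N_1+N_2=\mathcal{O}(N_1)$. The only step the paper leaves unwritten---which you correctly identify and supply---is that on the two-arm set $V_\beta$ the Bernstein-ellipse estimate of \cref{lemma:lp} must be replaced by a Bernstein--Walsh bound (Green's function of $\complex\setminus V_\beta$), valid because for each fixed $\beta<2$ the compact $V_\beta$ avoids $(-\infty,-1]$, where the nearest large pole $p_{N_1+1}\to -1$ accumulates; your reading of the uniformity (in $z$ for each fixed $\beta$, with constants degenerating as $\beta\uparrow 2$) is also the intended one.
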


Empirically it is found that \cref{thm:convergence2} describes the optimal convergence rate for the lightning + polynomial approximation of $\sqrt{z}$. As an illustration, the errors for fixed degree lightning + polynomial approximations ($N_1 = 40$, $N_2 = 10$) of $\sqrt{z}$ are displayed in Figure \ref{fig:angledep} as a function of the angle $\beta\pi$ of the V-shaped domain. The tapered lightning poles are positioned on $[-1,0]$ and results are computed for $\sigma = 2\sqrt{2}\pi$ \cref{eq:optsigmasqrtx} (circles), $\sigma = 2\sqrt{2-\beta}\pi$ \cref{eq:thm51sigma} (dots) and $\sigma = 4$ (crosses). The approximation using $\sigma = 2\sqrt{2}\pi$ is only optimal for $\beta = 0$, i.e.\ approximation on the unit interval. However, for a V-shaped domain, the error using $\sigma = 2\sqrt{2-\beta}\pi$ \cref{eq:thm51sigma} is found to be optimal. The figure reveals that the achievable accuracy rapidly decreases as $\beta \pi$ increases.

These results for $\sqrt{z}$ lead one to suspect that the value given in \cref{eq:cnj52sigma} below is an optimal choice of $\sigma$ for the approximation of $z^\alpha$ on a V-shaped domain. Numerical experiments indeed indicate that the results can again be generalized from $\sqrt{z}$ to $z^\alpha$.
\begin{conjecture}\label{conj:rotated}
    There exist coefficients $\{a_j\}_{j=1}^{N_1}$ and a polynomial $b(z)$ of degree $N_2 = \mathcal{O}(\sqrt{N_1} \kern1pt)$, for which the lightning + polynomial approximation $r(z)$ \cref{eq:ratapprox} having tapered lightning poles \cref{eq:taperedpoles} with 
    \begin{equation} \label{eq:cnj52sigma}
    \sigma = \sqrt{2(2-\beta)}\pi/\sqrt{\alpha}
    \end{equation}
    satisfies
    $$\vert r(z) - z^\alpha \kern1pt \rvert = \mathcal{O}(e^{-\pi \sqrt{2(2-\beta)\alpha N}} \kern1pt)$$
    as $N \to \infty$, uniformly for
    $z \in [0,1]e^{\pm i \beta\pi/2}, \; \beta \in [0,2).$
\end{conjecture}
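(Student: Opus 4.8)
The plan is to reproduce, for $z^\alpha$, the Stenger-type trapezoidal construction that \cref{sec:22} and \cref{sec:5} carried out for $\sqrt z$. For $0<\alpha<1$ and $z\notin(-\infty,0]$ one has
\begin{equation*}
 z^\alpha \;=\; \frac{\sin(\pi\alpha)}{\pi}\int_0^\infty \frac{z\,u^{\alpha-1}}{u+z}\,du \;=\; \frac{2\sin(\pi\alpha)}{\pi}\int_{-\infty}^{\infty}\frac{z\,e^{2\alpha s}}{e^{2s}+z}\,ds ,
\end{equation*}
the second form following from $u=e^{2s}$ and reducing to the representation of \cref{sec:22} when $\alpha=\tfrac12$. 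One then truncates the $s$-integral (asymmetrically when $\alpha\neq\tfrac12$, so as to balance the one-sided decay rates $e^{2\alpha s}$ as $s\to-\infty$ and $e^{-2(1-\alpha)s}$ as $s\to+\infty$), introduces the tapering substitution $s+T=\sqrt v$, and discretizes in $N_t$ points of step $h$ with the trapezoidal rule, following the recipe of \cref{sec:22}. By the same calculation that produced \cref{eq:trapezoidal_poles}, the resulting rational function $r_t(z)$ has partial-fraction poles $p_j=-\exp(-2\sqrt h\,(\sqrt{N_1}-\sqrt j\,))$ with $N_1$ proportional to $N_t$, so its smallest poles coincide exactly with the tapered lightning poles \eqref{eq:taperedpoles} for $\sigma=2\sqrt h$.

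The parameter values then come from balancing two errors, as in \cref{thm:trap,thm:trap2}. The poles of $1/(e^{2s}+z)$ lie at $\Im s=\pm(2-\beta)\pi/4$ when $z\in[0,1]e^{\pm i\beta\pi/2}$, so the analyticity half-width governing the trapezoidal discretization error is $(2-\beta)\pi/4$ — the source of the factor $2-\beta$ — while the truncation error is of order $e^{-2\alpha T}$ with $T$ of order $\sqrt{N_1h}$, the source of the factor $\alpha$. Choosing the step so that (after the tapering substitution) the discretization error decays no slower than the truncation error forces $h=(2-\beta)\pi^2/(2\alpha)$, which reduces to $h=2\pi^2$ of \cref{thm:trap} when $\alpha=\tfrac12,\ \beta=0$ and to $h=(2-\beta)\pi^2$ of \cref{thm:trap2} when $\alpha=\tfrac12$. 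This gives $\sigma=2\sqrt h=\sqrt{2(2-\beta)}\,\pi/\sqrt\alpha$ as in \eqref{eq:cnj52sigma}, together with $|r_t(z)-z^\alpha|=\mathcal{O}(e^{-\pi\sqrt{2(2-\beta)\alpha N_1}})$ uniformly on the V-shaped domain; for $\beta=0$ this recovers \eqref{conj:alpha}, and for $\alpha=\tfrac12$ it recovers \cref{thm:convergence2}.

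It then remains to run the argument of \cref{lemma:lp}: although $r_t$ has $\mathcal{O}(N_1)$ ``large'' poles $p_j<-1$, these together with the constant term define a function analytic in a fixed neighbourhood of the compact V-shaped domain (bounded away from $(-\infty,-1]$), hence — via a Bernstein-type estimate adapted to that domain — approximable by a polynomial $b(z)$ of degree only $N_2=\mathcal{O}(\sqrt{N_1})$ with error again $\mathcal{O}(e^{-\pi\sqrt{2(2-\beta)\alpha N_1}})$. Replacing the large poles and the constant by $b(z)$ yields a lightning $+$ polynomial approximant of the required form \eqref{eq:ratapprox} with tapered poles \eqref{eq:taperedpoles}, of total degree $N=N_1+N_2=\mathcal{O}(N_1)$, from which the rate in $N$ follows. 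For non-integer $\alpha>1$, where the Stenger integral above diverges, one factors out $z^{\lfloor\alpha\rfloor}$ (a polynomial) and applies the construction to $z^{\alpha-\lfloor\alpha\rfloor}$; the poles stay simple and $N_2$ grows only by $\mathcal{O}(1)$.

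The main obstacle is precisely the point at which \cref{thm:trap,thm:trap2} already have to invoke a numerical bound (\cref{conj:bound}, \cref{conj:bound2}): after the tapering substitution one must write $r_t(z)-z^\alpha$ as a contour integral / sum of residues and bound the trapezoidal remainder uniformly in $z$ over the entire V-shaped domain. For general $\alpha$ the integrand carries the additional algebraic factor $u^{\alpha-1}$, which makes the contour deformation and the required uniform estimate genuinely more delicate than in the $\sqrt z$ case, so one should again expect only a conditional result, with a numerical bound playing the role of \cref{conj:bound2}. Turning the empirical evidence of \cref{fig:typedependency} into such an estimate — and recovering the sharp constant in \eqref{eq:cnj52sigma} for $\alpha>1$, where the factoring workaround above does not by itself give the optimal rate — is where the real work remains.
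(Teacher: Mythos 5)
This statement is labeled a \emph{conjecture} in the paper: the authors give no proof, only numerical evidence (\cref{fig:typedependency}, \cref{fig:typedep2}) and the heuristic balance \cref{eq:optimalsigma}. Your proposal therefore does not compete with an argument in the paper — it supplies one where the paper has none. Your key new ingredient, the Cauchy--Stieltjes representation $z^\alpha = \frac{\sin(\pi\alpha)}{\pi}\int_0^\infty\frac{z\,u^{\alpha-1}}{u+z}\,du$ for $0<\alpha<1$, rewritten via $u=e^{2s}$ as $\frac{2\sin(\pi\alpha)}{\pi}\int_{-\infty}^\infty\frac{z\,e^{2\alpha s}}{e^{2s}+z}\,ds$, is a genuine advance, since the paper explicitly states in \cref{sec:3} that it does not know of such a representation. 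The identity is correct (it is the standard beta-integral formula and reduces to the paper's formula at $\alpha=1/2$), and your bookkeeping of the constants checks out: with asymmetric truncation $T_-=\tau/(2\alpha)$, $T_+=\tau/(2(1-\alpha))$, the tapering substitution $s+T_-=\sqrt{v}$, and the identification $N_1=T_-^2/h$ between the number of small poles and the left truncation length, one indeed gets $p_j=-\exp(-2\sqrt{h}(\sqrt{N_1}-\sqrt{j}))$, i.e.\ $\sigma=2\sqrt{h}$. The pole of $1/(e^{2s}+z)$ nearest the real $s$-axis lies at $|\Im s|=(2-\beta)\pi/4$, so — exactly as in the residue step of \cref{app1,app4} — requiring the $r$-dependence of the residue term $r_\pm\delta(\pi_\pm)$ to match $r^\alpha$ forces $h=(2-\beta)\pi^2/(2\alpha)$, which gives $\sigma=\sqrt{2(2-\beta)}\,\pi/\sqrt{\alpha}$ as in \eqref{eq:cnj52sigma} and the rate $e^{-2\alpha T_-}=e^{-\pi\sqrt{2(2-\beta)\alpha N_1}}$, consistent with \cref{thm:trap,thm:trap2} at $\alpha=1/2$.

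What remains open — and you flag this honestly — is that even the $\sqrt{z}$ case is only conditional in the paper: \cref{thm:trap,thm:trap2} rest on numerically verified bounds (\cref{conj:bound}, \cref{conj:bound2}) for the integral $\int_\Gamma f(u,z)\delta(u)\,du$. The analogue for $z^\alpha$, with the new integrand factor $e^{2\alpha(\sqrt{u}-T_-)}$ and an asymmetrically shifted $\Gamma$, would be a new conjectural bound, not yet even stated. Two further gaps: (i) for $\alpha>1$ the integral diverges, and factoring out $z^{\lfloor\alpha\rfloor}$ preserves the lightning $+$ polynomial form but gives the rate for $\alpha-\lfloor\alpha\rfloor$, which is strictly slower than the claimed $e^{-\pi\sqrt{2(2-\beta)\alpha N}}$, so $\alpha>1$ genuinely needs a different argument; (ii) the analogue of \cref{lemma:lp} requires polynomial approximation on the compact V-shaped set, not an interval, so the Bernstein-ellipse step must be replaced by a potential-theoretic estimate via the Green's function of the complement, and one should verify that the implied constant does not degrade as $N_1\to\infty$ even though the collection of large poles being approximated moves with $N_1$. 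In short: a sound and informative route, more than the paper offers, that reduces the conjecture for $0<\alpha<1$ to a contour bound of the same conditional flavour as \cref{conj:bound2}, but not a closed proof.
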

The optimal value for $\sigma$ is again found to behave similarly for the approximation of $z^\alpha \log{z}$.

\begin{figure}
    \centering
    \includegraphics[width=0.7\linewidth]{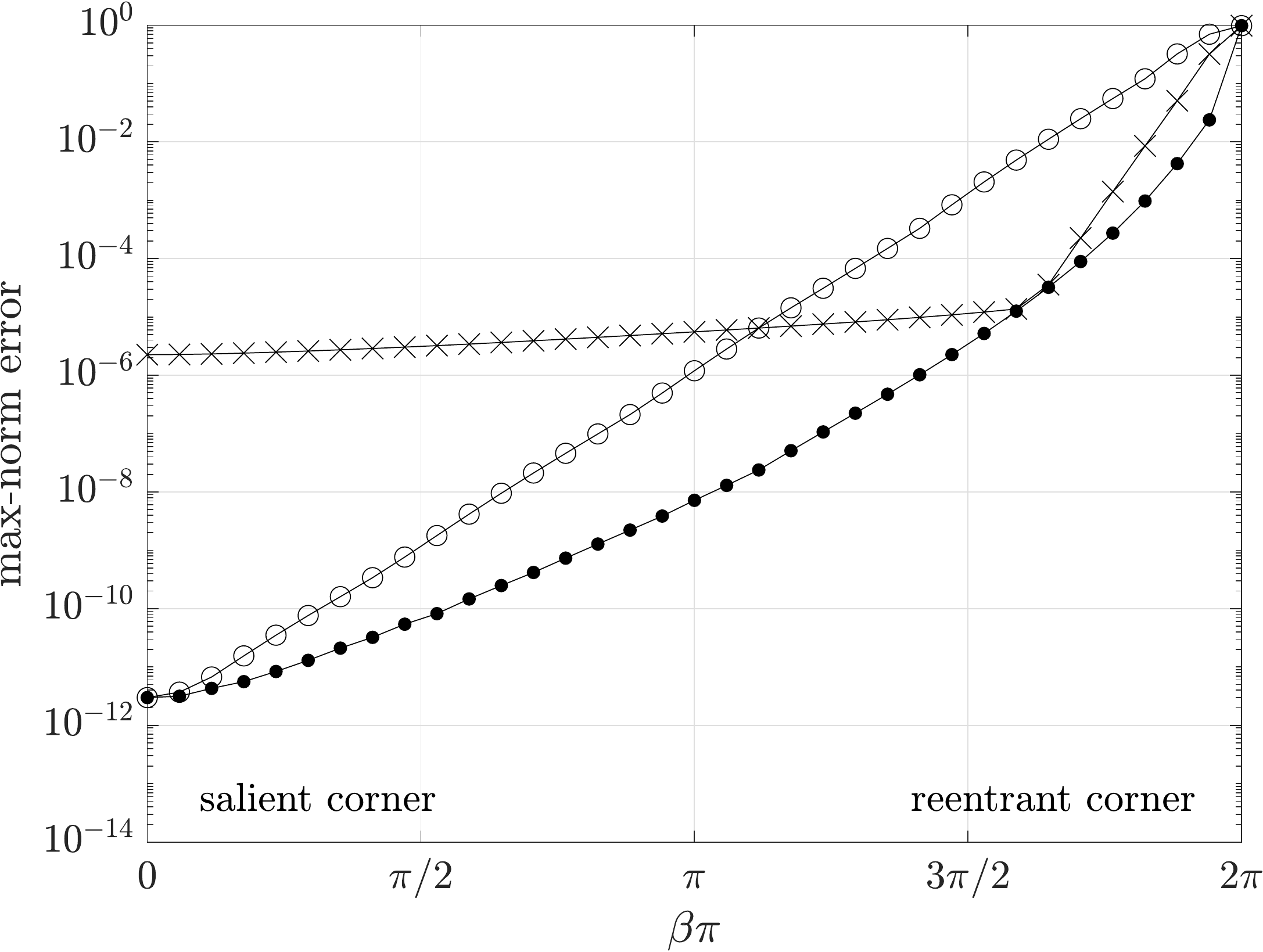}
    \caption{Max-norm errors of the tapered lightning + polynomial approximations ($N_1 = 40$, $N_2 = 10$) of $\sqrt{z}$ as functions of the corner of the V-shaped domain. Circles: $\sigma = 2\sqrt{2}\pi$ \cref{eq:optsigmasqrtx}, dots: $\sigma = 2\sqrt{2-\beta}\pi$ \cref{eq:thm51sigma}, crosses: $\sigma = 4$.}
    \label{fig:angledep}
\end{figure}
 
\subsection{Lightning PDE solvers}
For the Laplace PDE, the link between the type of the corner singularity $z^\alpha$ and the angle of the corner $\beta \pi$ is described in \cite{wasow}. Following \cite[Theorem 5]{wasow}, the dominant asymptotic behaviour near the corner can be described as $\mathcal{O}(z^{1/\beta})$ for $1/\beta$ non-integer and $\mathcal{O}(z^{1/\beta}\log z)$ for $1/\beta$ integer, $\beta \in (0,2)$. The local Laplace problem near a corner $\beta \pi$ is therefore equivalent to an approximation problem of $z^\alpha$ or $z^\alpha\log{z}$ with $\alpha = 1/\beta$ on a V-shaped domain. The conjectured value \cref{eq:cnj52sigma} implies an optimal value for $\sigma$:
\begin{equation}
    \sigma = \sqrt{2(2 - \beta)\beta}\pi,
    \label{eq:optsigmaalles}
\end{equation}
with $\beta \in (0,2)$. 

\Cref{fig:typedep2} compares this result to the empirically optimal value for $\sigma$. It displays the value of $\sigma$ that minimizes the error of the fixed degree lightning + polynomial approximation ($N_1 = N_2 = 20$) of $z^{1/\beta}$ on a V-shaped domain with corner $\beta \pi$. For $\beta \pi \to 0$, a salient corner, the singularity $z^\alpha$ becomes increasingly weak, since $\alpha \to \infty$. Also, the V-shaped approximation domain lies close to the positive real axis, away from the lightning poles. In this regime, the approximation quickly converges to a level close to machine precision. For $\beta \pi \to 2\pi$, a reentrant corner, the domain converges to a domain containing a slit. The singularity $z^\alpha$ becomes increasingly strong, since $\alpha \to 0$, while the V-shaped domain lies close to the poles on the negative real axis. In this regime, the approximation converges very slowly. For the wide range of corners $\beta \pi$ between these extremes, it follows that $\sigma = 4$ is a good approximation to the optimal $\sigma$. Therefore, the value $\sigma = 4$ used in \cite{brubecklightningStokesSolver2022,gopalSolvingLaplaceProblems2019,trefethenExponentialNodeClustering2021} results in root-exponential convergence at a near-optimal rate for corner singularities.

\begin{figure}
    \centering
    \includegraphics[width=.65\linewidth]{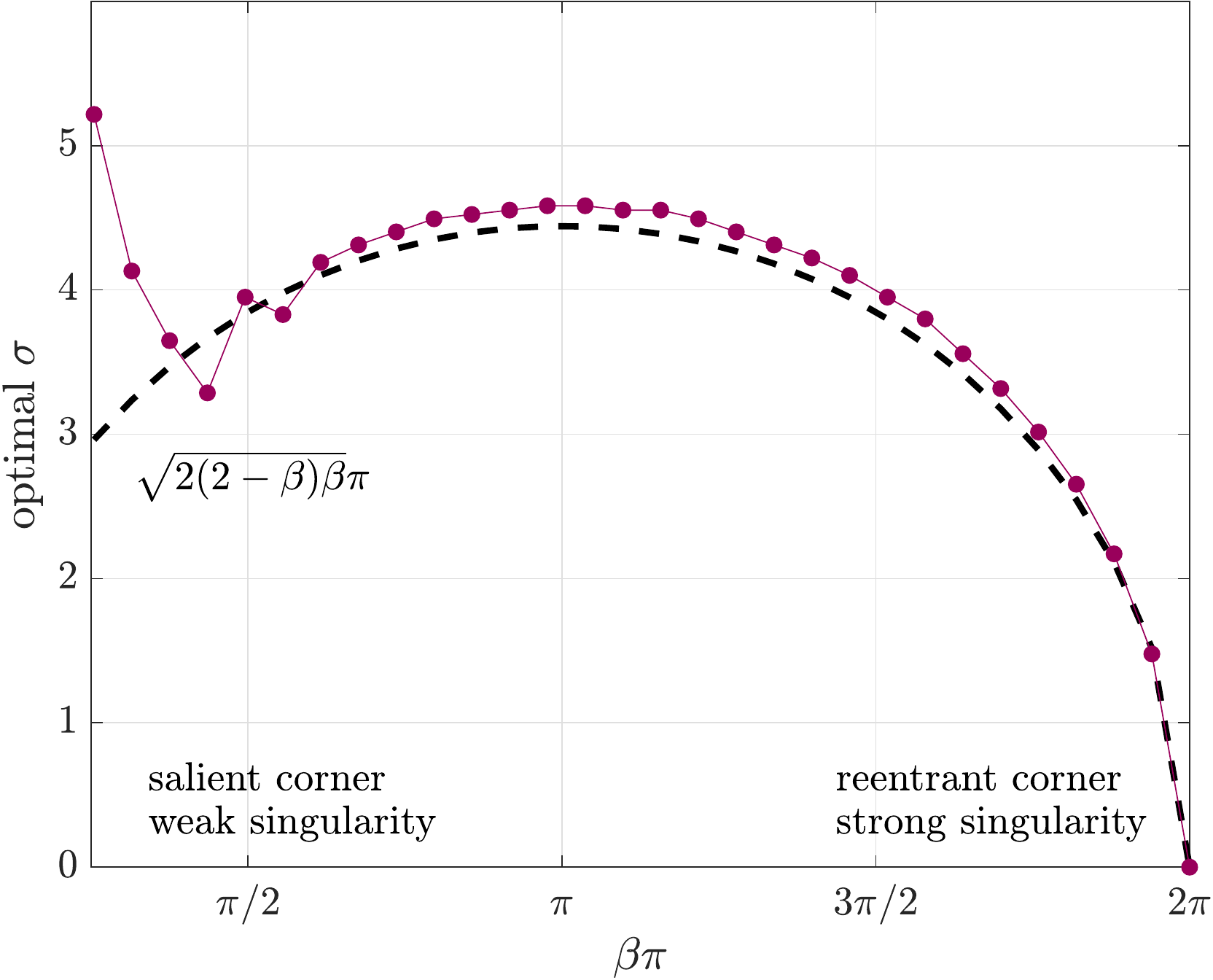}
    \caption{Optimal clustering parameter $\sigma$ for the approximation of $x^{1/\beta}$ on a V-shaped domain with corner $\beta \pi$. This problem models the local Laplace problem near a corner. The dots display the values for $\sigma$ that minimize the error of the tapered lightning + polynomial approximation using $N_1 = N_2 = 20$. The dashed line marks the conjectured value \cref{eq:optsigmaalles}.}
    \label{fig:typedep2}
\end{figure}

\appendix
\section{Proof of \cref{thm:trap}}\label{app1}
We refer to the following formulas:
\begin{equation}\label{app:f}
f(u,x) = \frac{x}{\pi}\frac{1}{\sqrt{u}} \frac{e^{\sqrt{u}-T}}{e^{2(\sqrt{u}-T)}+x}.
\end{equation}

\begin{equation}\label{app:I}
     I(x) = \frac{2x}{\pi} \int_{-T}^{T} \frac{e^s}{e^{2s}+x} \kern2pt ds = \int_{0}^{4T^2} f(u,x) \kern2pt du
\end{equation}

\begin{equation}\label{app:S}
     S(x) = h \sum_{j=1}^{N_t} f(jh, x).
\end{equation}
Thus, we choose $T = \sqrt{N_t h/4}$.

\begin{lemma}
 The truncation error for $T > 0$ and for any $x \in [0,1]$ satisfies
 \[
  |\sqrt{x} - I(x)| < \frac{4}{\pi} e^{-T} < \frac32 e^{-T}.
 \]
\end{lemma}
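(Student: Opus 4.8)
The plan is to exploit the exact integral representation $\sqrt{x} = \frac{2x}{\pi}\int_{-\infty}^{\infty} \frac{e^s}{e^{2s}+x}\,ds$ recalled in \cref{sec:22} and subtract from it the truncated integral $I(x)$ of \cref{app:I}. The truncation error is then exactly the pair of tails
\[
 \sqrt{x} - I(x) \;=\; \frac{2x}{\pi}\left( \int_{-\infty}^{-T} + \int_{T}^{\infty} \right) \frac{e^s}{e^{2s}+x}\,ds .
\]
For $x=0$ the statement is trivial, since both $\sqrt{x}$ and $I(x)$ vanish, so I would assume $x \in (0,1]$; then each integrand is strictly positive, $\sqrt{x} - I(x) > 0$, and it suffices to bound the two tail integrals from above.

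For the right tail I would use $e^{2s}+x > e^{2s}$, hence $\frac{e^s}{e^{2s}+x} < e^{-s}$, so that $\int_T^\infty \frac{e^s}{e^{2s}+x}\,ds < \int_T^\infty e^{-s}\,ds = e^{-T}$; since $\frac{2x}{\pi}\le\frac{2}{\pi}$ on $[0,1]$, this contributes less than $\frac{2}{\pi}e^{-T}$. For the left tail the same crude bound is useless because $\int_{-\infty}^{-T}e^{-s}\,ds$ diverges, so instead I would drop $e^{2s}$ from the denominator: $e^{2s}+x>x$ gives $\frac{e^s}{e^{2s}+x} < \frac{e^s}{x}$, whence $\int_{-\infty}^{-T}\frac{e^s}{e^{2s}+x}\,ds < \frac{1}{x}\int_{-\infty}^{-T}e^s\,ds = \frac{e^{-T}}{x}$; multiplying by the prefactor $\frac{2x}{\pi}$ cancels the $x$ and again yields a bound $\frac{2}{\pi}e^{-T}$. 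Adding the two contributions gives $|\sqrt{x} - I(x)| < \frac{4}{\pi}e^{-T}$, and finally $\frac{4}{\pi} = 1.273\ldots < \frac32$ closes the estimate.

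There is essentially no obstacle here; the only point needing a moment's care is that the two tails must be controlled by two \emph{different} elementary inequalities — bounding the denominator below by $e^{2s}$ on the right but by $x$ on the left — and that in each case the resulting dependence on $x$ is exactly absorbed by the factor $\frac{2x}{\pi}$, using $x\le 1$ on the right and the cancellation of $x$ on the left. Strictness of the inequality comes for free: for $x>0$ each integrand is strictly below its majorant, and for $x=0$ it holds trivially.
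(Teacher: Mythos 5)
Your proof is correct and follows essentially the same route as the paper's: split off the two tails, bound the denominator below by $e^{2s}$ on the right tail and by $x$ on the left tail, integrate the resulting exponentials, and absorb the $x$-dependence via $x\le 1$. The only cosmetic difference is that you spell out the two elementary integrals explicitly, whereas the paper compresses this into a remark about exponential decay; the content is identical.
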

\begin{proof}
 We have
 \[
 \sqrt{x} - I(x) = \frac{2x}{\pi} \int_{-\infty}^{\infty} \frac{e^s}{e^{2s}+x} \kern2pt ds - \frac{2x}{\pi} \int_{-T}^{T} \frac{e^s}{e^{2s}+x} \kern2pt ds.
 \]
We can bound the integrand by omitting $x$ from the denominator for $s \geq T$, and omitting $e^{2s}$ from the denominator for $s \leq -T$. Exponential decay in either direction away from $\pm T$ shows that both tail integrals are bounded by the value of the integrand at the endpoints $\pm T$. Noting in addition that $x\leq 1$, the result follows.
\end{proof}

For small $x$, we evaluate the quadrature error directly.
\begin{lemma}
 For $x \in [0,e^{4-2T}]$ the quadrature error satisfies
 \[
  |I(x) - S(x)| < 2 e^{2-T} < 15 e^{-T}.
 \]
\end{lemma}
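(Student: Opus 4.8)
The plan is to exploit a monotonicity property of the integrand $f(\cdot,x)$ that holds precisely in this small-$x$ regime, which traps the quadrature sum $S(x)$ between two shifted copies of the integral $I(x)$.

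\emph{Step 1: monotonicity of the integrand.} I would first show that, for $x \in [0, e^{4-2T}]$, the map $u \mapsto f(u,x)$ is decreasing on $(0, 4T^2]$. Writing $f(u,x) = \frac{x}{\pi\sqrt u}\, g(\sqrt u - T)$ with $g(w) = e^w/(e^{2w}+x)$, a short computation gives
\[
 \frac{d}{du}\log f(u,x) = \frac{1}{2\sqrt u}\left(\frac{x - e^{2(\sqrt u - T)}}{x + e^{2(\sqrt u - T)}} - \frac{1}{\sqrt u}\right),
\]
so with $v = \sqrt u$ the derivative is negative exactly when $x(v-1) < e^{2(v-T)}(v+1)$. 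This is automatic for $v \le 1$; for $v > 1$ it follows from $x \le e^{4-2T}$ together with the elementary bound $\max_{v \ge 1}(v-1)e^{4-2v} = e/2 < 2 \le v+1$, since then $e^{4-2T}(v-1) < e^{2(v-T)}(v+1)$. Hence $f(\cdot,x)$ is decreasing on $(0,4T^2]$.

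\emph{Step 2: squeezing the sum.} Because $f(\cdot,x)$ is nonnegative and decreasing, $h\,f(jh,x) \le \int_{(j-1)h}^{jh} f(u,x)\,du$ for every $j \ge 1$; summing over $j = 1,\dots,N_t$ yields $S(x) \le \int_0^{4T^2} f(u,x)\,du = I(x)$ (the improper integral at $u=0$ converges). In the opposite direction $h\,f(jh,x) \ge \int_{jh}^{(j+1)h} f(u,x)\,du$ for $j = 1,\dots,N_t-1$ and $h\,f(N_t h, x) \ge 0$, so $S(x) \ge \int_h^{4T^2} f(u,x)\,du = I(x) - \int_0^h f(u,x)\,du$. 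Combining, $0 \le I(x) - S(x) \le \int_0^h f(u,x)\,du$.

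\emph{Step 3: the remaining integral.} By AM--GM, $e^{2(\sqrt u - T)} + x \ge 2\sqrt x\, e^{\sqrt u - T}$, hence $f(u,x) \le \sqrt x/(2\pi\sqrt u)$, and with $h = 2\pi^2$ (the value fixed in \cref{thm:trap}),
\[
 \int_0^h f(u,x)\,du \;\le\; \frac{\sqrt x}{2\pi}\int_0^{2\pi^2}\frac{du}{\sqrt u} \;=\; \frac{\sqrt h}{\pi}\,\sqrt x \;=\; \sqrt 2\,\sqrt x.
\]
Since $x \le e^{4-2T}$ gives $\sqrt x \le e^{2-T}$, this yields $|I(x) - S(x)| \le \sqrt 2\, e^{2-T} < 2\, e^{2-T}$, and $2 e^{2-T} = 2e^2 e^{-T} < 15\, e^{-T}$ because $2e^2 \approx 14.78$.

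The main obstacle is Step 1: the one-sided comparison in Step 2 is valid only because $f(\cdot,x)$ has no interior maximum, and the hypothesis $x \le e^{4-2T}$ is exactly the threshold guaranteeing this — for larger $x$ the integrand peaks near $u = (T + \frac12\log x)^2$, which is why the small-$x$ range is isolated in a separate lemma. Once the monotonicity is in hand, the rest is routine.
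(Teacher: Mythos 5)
Your proof is correct and shares the paper's key observation: for $x$ in this range the integrand $u\mapsto f(u,x)$ is monotonically decreasing, so $S(x)$ is a right-endpoint Riemann sum of a decreasing function. In fact your Step~1 supplies the ``straightforward but lengthy calculations'' the paper merely asserts, and the computation checks out. Where the two arguments part ways is after monotonicity is in hand. The paper closes more quickly: since the integrand of the full integral representation is positive, truncation gives $I(x) < \sqrt{x} \le e^{2-T}$, and together with $0 \le S(x) < I(x)$ this yields $|I(x)-S(x)| < e^{2-T}$ directly, with no reference to $h$. You instead apply the monotone comparison in both directions to trap the error, $0 \le I(x)-S(x) \le \int_0^h f(u,x)\,du$, and then bound the leftover integral by AM--GM and the specific choice $h = 2\pi^2$, landing at $\sqrt{2}\,e^{2-T}$. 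Both are well inside the lemma's $2e^{2-T}$; your version avoids invoking the identity $I(x) < \sqrt{x}$ (essentially the content of the preceding truncation lemma) at the cost of the extra Step~3 and of tying the bound to the particular $h$. One minor slip in your closing remark: $e^{4-2T}$ is not the exact monotonicity threshold. Your Step~1 uses the crude bound $v+1 \ge 2$; optimizing $e^{2v}(v+1)/(v-1)$ over $v>1$ (minimum at $v=\sqrt{2}$), as the paper does, gives the sharper threshold $x^* = e^{2\sqrt{2}-2T}(\sqrt{2}+1)/(\sqrt{2}-1) \approx e^{4.59-2T}$. This is harmless, since the lemma only requires $x\le e^{4-2T}$.
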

\begin{proof}
Because the integrand of its integral representation is positive, by truncation we have $I(x) < \sqrt{x}$. Thus, for $x \in [0,e^{4-2T}]$ we have $I(x) < e^{2-T}$. For $S(x)$, straightforward but lengthy calculations show that the integrand $f(u,x)$ is monotonically decreasing, for fixed $x$ and as a function of $u \in [0,\infty)$, as long as $x \leq x^* = e^{2\sqrt{2}-2T} \frac{\sqrt{2}+1}{\sqrt{2}-1} \approx e^{4.59-2T}$. Thus, in the range of $x$ of this lemma, $S(x)$ is a Riemann sum approximation of $I(x)$ based on samples at the right of each rectangle. Because the integrand is positive and decays monotonically this implies $S(x) < I(x)$.
\end{proof}

Next, we formulate an exact representation of the quadrature error in terms of a contour integral. To that end, following \cite{trefethen2014trapezoidal}, we define the function
\begin{equation}
 \delta(u) = \mu(u) - m(u), \label{delta}
\end{equation}
with
\[
\mu(u) = 
\begin{cases}
-\frac12 ~~~\Im u \ge 0, \\[2pt]
\hphantom{-}\frac12 ~~~\Im u < 0,
\end{cases}
\]
and
\[
m(u) = -\frac{i}{2} \cot\left( \frac{\pi u}{h} \right).
\]
These formulas correspond to those given in Table 13.1 and Table 13.2 of~\cite{trefethen2014trapezoidal}.

First, we consider the possible poles of the integrand.
\begin{lemma}\label{lem:poles}
 For $x \in (0,1]$, as a function of $u$ the function $f(u,x)$ has poles at
 \[
  \pi_{\pm,k} = \left(T + \frac12 \log x\right)^2 - \pi^2 \left(k + \frac12\right)^2 \pm i \pi \left(T \left(2k+1\right) + \left(k+\frac12\right) \log x\right).
 \]
 The residues of the poles $\pi_{\pm} = \pi_{\pm,0}$ closest to the real axis, for $k=0$, are $r_{\pm} = \frac{\mp i\sqrt{x}}{\pi}$.
\end{lemma}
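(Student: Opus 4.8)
The plan is to reduce everything to an elementary function of $s=\sqrt u$. First I would write $f(u,x)=\frac{x}{\pi}\,g(\sqrt u)$, where $g(s)=\frac{e^{s-T}}{s\,(e^{2(s-T)}+x)}$ and $\sqrt{\cdot}$ is the principal branch, a conformal bijection from $\complex\setminus(-\infty,0]$ onto the open right half-plane $\{\Re s>0\}$ with nonvanishing derivative $1/(2\sqrt u)$. Since $f$ is holomorphic on $\complex\setminus(-\infty,0]$ away from its poles, and $u\mapsto\sqrt u$ is biholomorphic there, the poles of $f$ are precisely the points $u_0=s_0^2$ with $s_0$ a pole of $g$ lying in $\{\Re s>0\}$.

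Next I would locate the poles of $g$. The point $s=0$ is not in the half-plane, so the poles come only from $e^{2(s-T)}+x=0$, i.e.\ $e^{2(s-T)}=-x$; taking logarithms gives $2(s-T)=\log x+i\pi(2k+1)$, hence $s=T+\frac12\log x+i\pi(k+\frac12)$ for $k\in\mathbb Z$. For $N_t$ large enough (so that $T=\sqrt{N_th/4}>\frac12\lvert\log x\rvert$ on the relevant range of $x$) every such $s$ has positive real part and hence is a genuine pole of $g$; pairing the index $k\ge 0$ with $-(k+1)$ yields the conjugate pair $s_0=T+\frac12\log x\pm i\pi(k+\frac12)$. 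Squaring, $s_0^2=(T+\frac12\log x)^2-\pi^2(k+\frac12)^2\pm 2i\pi(k+\frac12)(T+\frac12\log x)$, and rewriting the imaginary part as $\pm i\pi(T(2k+1)+(k+\frac12)\log x)$ gives exactly the claimed formula for $\pi_{\pm,k}$.

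For the residues I would use the transfer rule across $u=s^2$: if $g$ has a simple pole at $s_0$ with residue $\rho_0$, then writing $u-u_0=(s-s_0)(s+s_0)$ and letting $u\to u_0$ (so $\sqrt u\to s_0$) gives $\operatorname{Res}_{u=u_0}f=\frac{x}{\pi}\,(2s_0)\,\rho_0$. Here $\rho_0=\frac{e^{s_0-T}}{s_0\cdot 2e^{2(s_0-T)}}=\frac{e^{s_0-T}}{-2x\,s_0}$ by $e^{2(s_0-T)}=-x$; for $k=0$ one has $s_0-T=\frac12\log x\pm i\pi/2$, so $e^{s_0-T}=\sqrt x\,e^{\pm i\pi/2}=\pm i\sqrt x$, whence $\rho_0=\frac{\mp i}{2\sqrt x\,s_0}$ and $\operatorname{Res}_{u=\pi_{\pm,0}}f=\frac{x}{\pi}\cdot 2s_0\cdot\frac{\mp i}{2\sqrt x\,s_0}=\frac{\mp i\sqrt x}{\pi}$, as asserted.

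Almost all of this is routine algebra; the one place that needs genuine care is the branch argument, namely checking that every root of $e^{2(s-T)}=-x$ really lies in the open right half-plane, so that it maps back to a bona fide pole of $f$ rather than onto the branch cut. This is exactly where the largeness of $T$, i.e.\ $N_t\to\infty$, enters: for $x\le e^{-2T}$ the function $f(\cdot,x)$ in fact has no poles at all, consistent with the lemma being used only in an asymptotic regime. It is also worth noting that $s_0\neq 0$ automatically, so the factor $1/s$ in $g$ introduces no spurious pole and the factor $2s_0$ in the residue formula is harmless.
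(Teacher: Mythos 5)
Your proposal is correct and follows essentially the same route as the paper: solve $e^{2(\sqrt u - T)}+x=0$ by taking all branches of $\log(-x)=\log x\pm(2k+1)\pi i$, square to obtain $\pi_{\pm,k}$, and transfer the residue from $s=\sqrt u$ back to $u$ via the factor $2s_0$. Your extra remark that the $s$-roots must lie in the right half-plane (hence that for $x\le e^{-2T}$ there are no genuine poles) is a useful caveat the paper leaves implicit, but it is consistent with the lemma's use in the regime $x\ge e^{4-2T}$, $T\to\infty$.
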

\begin{proof}
The poles correspond to the roots of the denominator of $f(u,x)$. That leads to $2(\sqrt{u}-T) = \log(-x)$. The result follows by taking into account all branches of the logarithm $\log(-x) = \log x \pm (2k + 1) \pi i, k=0,1,2,\dots$. The residues for $k=0$ are readily obtained by further calculation.
\end{proof}

\begin{theorem}\label{thm:contour}
 For $x \in [e^{4-2T},1]$, the quadrature error is given by
 \[
 I(x) - S(x) = \int_{\Gamma_1 \cup \Gamma_2} f(u,x) \kern2pt du + \int_\Gamma f(u,x) \delta(u) \kern2pt du - 2\pi i (r_{+} \delta(\pi_{+}) + r_- \delta(\pi_-)),
 \]
 in which $\Gamma_1 = [0,1]$, $\Gamma_2 = [4T^2+1,4T^2]$ and $\Gamma$ corresponds to the positively oriented rectangle $[1,4T^2+1] \times [-ai,ai]$, with $a = 2\pi (T + \frac12 \log x)$. Finally, $r_{\pm}$ and $\pi_{\pm}$ are the poles and residues of $f(u,x)$ according to Lemma~\ref{lem:poles}.
\end{theorem}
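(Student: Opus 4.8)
The plan is to adapt the contour-integral representation of trapezoidal-rule errors from~\cite{trefethen2014trapezoidal} to the finite quadrature interval $[0,4T^2]$ and to the branch point of $\sqrt{u}$ at $u=0$. Two facts drive the argument: $m(u)=-\frac{i}{2}\cot(\pi u/h)$ has a simple pole of residue $-ih/(2\pi)$ at each node $u=jh$, so that $h\,f(jh,x)=2\pi i\,\mathrm{Res}_{u=jh}\big[f(u,x)m(u)\big]$; and $m=\mu-\delta$ with $\mu$ piecewise constant. Summing the residue identity over $j=1,\dots,N_t$ writes $S(x)$ in \cref{app:S} as a finite sum of residues of $f(u,x)m(u)$. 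I would then apply the residue theorem to $f(u,x)m(u)$ on the positively oriented rectangle $\Gamma$. The real-part range $[1,4T^2+1]$ is chosen so that $\Gamma$ encloses all the nodes $h,2h,\dots,N_t h$ while keeping the branch point $u=0$ and the branch cut of $\sqrt{u}$ along $(-\infty,0]$ strictly outside; the half-height $a=2\pi\big(T+\frac12\log x\big)$ is chosen so that, among the poles of $f$ listed in \cref{lem:poles}, $\Gamma$ encloses exactly $\pi_\pm:=\pi_{\pm,0}$. This gives
\[
 \oint_\Gamma f(u,x)m(u)\,du \;=\; S(x) + 2\pi i\big(r_+m(\pi_+)+r_-m(\pi_-)\big),
\]
with $r_\pm$ the residues from \cref{lem:poles}.

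Next I would substitute $m=\mu-\delta$ from \cref{delta}. The contour integral $\oint_\Gamma f\delta\,du$ and the correction $2\pi i\big(r_+\delta(\pi_+)+r_-\delta(\pi_-)\big)$ are exactly the terms in the claimed formula, so what remains is to evaluate $\oint_\Gamma f(u,x)\mu(u)\,du$ and $2\pi i\big(r_+\mu(\pi_+)+r_-\mu(\pi_-)\big)$. Since $\mu\equiv-\frac12$ on the closed upper half-plane and $\mu\equiv+\frac12$ on the open lower half-plane, I would split $\Gamma$ along the real segment $[1,4T^2+1]$ into the boundaries $\partial R^+$ and $\partial R^-$ of the upper and lower sub-rectangles. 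On $\partial R^+$ the factor $\mu$ is the constant $-\frac12$ and the only enclosed pole of $f$ is $\pi_+$ (note $\Im\pi_+=a/2\in(0,a)$), so $\oint_{\partial R^+}f\mu\,du=-\pi i\,r_+$. On $\partial R^-$ the factor $\mu$ equals $+\frac12$ except on the shared real edge where it is $-\frac12$; writing this as $\frac12\oint_{\partial R^-}f\,du$ plus the discrepancy over that edge, and using that the only enclosed pole is $\pi_-$, gives $\oint_{\partial R^-}f\mu\,du=\pi i\,r_-+\int_1^{4T^2+1}f(u,x)\,du$. Adding the two and inserting $r_\pm=\mp i\sqrt{x}/\pi$ yields $\oint_\Gamma f\mu\,du=\int_1^{4T^2+1}f(u,x)\,du-2\sqrt{x}$; the same residues, with $\mu(\pi_+)=-\frac12$ and $\mu(\pi_-)=+\frac12$, give $2\pi i\big(r_+\mu(\pi_+)+r_-\mu(\pi_-)\big)=-2\sqrt{x}$, so the two copies of $2\sqrt{x}$ cancel when everything is recombined. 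One then reaches $S(x)=\int_1^{4T^2+1}f\,du-\oint_\Gamma f\delta\,du+2\pi i\big(r_+\delta(\pi_+)+r_-\delta(\pi_-)\big)$, and subtracting this from $I(x)=\int_0^{4T^2}f\,du$ in \cref{app:I}, together with the elementary identity $\int_0^{4T^2}-\int_1^{4T^2+1}=\int_0^1-\int_{4T^2}^{4T^2+1}=\int_{\Gamma_1\cup\Gamma_2}$, produces the stated formula.

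The residue and orientation bookkeeping above is routine; the substantive step, and the one I expect to demand the most care, is the geometric verification underpinning it: that for every $x\in[e^{4-2T},1]$ the rectangle $\Gamma$ encloses precisely $\pi_{\pm,0}$ and no other pole of $f$, with none of them lying on $\Gamma$ itself. The threshold $e^{4-2T}$ is tuned exactly for this. Indeed $x\ge e^{4-2T}$ forces $T+\frac12\log x\ge2$, so $a>0$; using also $0\le T+\frac12\log x\le T$ (since $0<x\le1$) one gets $\Re\pi_{\pm,0}=\big(T+\frac12\log x\big)^2-\pi^2/4\in(1,4T^2+1)$ and $\Im\pi_{+,0}=a/2\in(0,a)$, while the next poles satisfy $|\Im\pi_{\pm,1}|=3a/2>a$ and so lie outside $\Gamma$; and since $x>e^{-2T}$ the poles of $f$ have nonzero imaginary part and cannot coincide with the real nodes. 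One also needs $h>1$, so that every node $h,\dots,N_t h$ is interior to $\Gamma$ and the separating abscissa $1$ lies strictly between $u=0$ and $u=h$ --- harmless for the stepsize $h=2\pi^2$ relevant to \cref{thm:trap}. The only remaining subtlety is clerical: keeping track of which half-plane each $\pi_\pm$ occupies when evaluating $\mu(\pi_\pm)$ and $\delta(\pi_\pm)$.
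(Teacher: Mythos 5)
Your argument is correct and matches the paper's proof in substance: both represent the quadrature sum $S(x)$ as residues of $f(u,x)m(u)$ inside $\Gamma$, exploit $m=\mu-\delta$, and use the indicated geometric bounds to show $\Gamma$ encloses exactly $\pi_{\pm,0}$ and the nodes $h,\dots,N_t h$ and nothing else. The only organizational difference is that the paper expresses $I_{\textrm{mid}}(x)=\int_1^{4T^2+1}f\,du$ and $S(x)$ directly as $\Gamma$-contour integrals against $\mu$ and $m$ respectively (citing \cite[\S 13]{trefethen2014trapezoidal} for the $\mu$-identity) and then subtracts, whereas you start from the residue theorem for $fm$ alone and then derive the $\mu$-identity explicitly by splitting $\Gamma$ into upper and lower sub-rectangles; the bookkeeping is equivalent and your explicit derivation of the cancellation $-2\sqrt{x}+2\sqrt{x}$ is a correct (and slightly more self-contained) route to the same formula.
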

\begin{proof}
Let us first show that the rectangle delineated by $\Gamma$ contains just two poles, and they are precisely $\pi_{\pm}$. Indeed, from Lemma~\ref{lem:poles}, for $x \in [e^{4-2T},1]$ the real part of the poles $\pi_{\pm}$ are larger than $4-\pi^2/4 \approx 1.53$ and smaller than $T^2-\pi^2/4$. Their imaginary part is $\pm a/2$. All other poles have imaginary part greater than $a$ (in absolute value) for any $k \neq 0$, unless possibly for $x < e^{4-2T}$. Both cases lie outside the rectangle.

We can write $I(x)$ as the sum of the integral of $f$ on $\Gamma_1 \cup \Gamma_2$ and
 \[
 I_{\textrm{mid}}(x) = \int_1^{4T^2+1} f(u,x) du  = \int_\Gamma f(u,x) \mu(u)du - 2 \pi i (r_+ \mu(\pi_+) + r_- \mu(\pi_-)).
\]
The latter equality follows from the construction of $\mu(u)$ as the characteristic function of a bounded interval, see~\cite[\S 13]{trefethen2014trapezoidal}, and subtracting out the residues that were picked up at the two poles after deformation onto $\Gamma$.

A similar expression holds for $S(x)$, involving $m(u)$ rather than $\mu(u)$. However, the function $m(u)$ also has poles on the real line at $j h = j 4T^2/N_t$, for $j \in \mathbb{Z}$, and that set includes the endpoints $0$ and $4T^2$. The range of integration was shifted from $[0,4T^2]$ to $[1,4T^2+1]$ in order to avoid those. Note that the remaining poles of $m$ contained within $\Gamma$ correspond precisely to the quadrature points, and their residues sum up to the trapezoidal rule applied to $f$, see~\cite[\S 13]{trefethen2014trapezoidal}. The final expression follows from $\delta(u) = \mu(u) - m(u)$.
\end{proof}

Since the integrand function $f(u,x)$ is small near the endpoints of $[0,4T^2]$, for small values of $N_t$ the size of the residues is the dominant factor in the discretization error. That leads to the following optimal choice of $h$, with which the convergence rate of the best rational approximation to $\sqrt{x}$ is obtained.

To that end, we first wish to quantify the size of $\delta(u)$ in the complex plane.

\begin{lemma}\label{lem:delta}
For any $u \in \mathbb{C}$ with $| \Imm u | \geq h/(2\pi)$,
\[
|\delta(u)| \leq \frac32 e^{-2\pi |\Imm u|/h}.
\]
\end{lemma}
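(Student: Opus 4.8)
The plan is to collapse $\delta$ to a function of the single variable $w := e^{2\pi i u/h}$ and then estimate a geometric series. First note that $|\Imm u| \ge h/(2\pi) > 0$ forces $\Imm u \neq 0$, and that $\delta$ is symmetric under conjugation up to sign: from \eqref{delta} one has $\mu(\bar u) = -\mu(u)$, while $m(\bar u) = -\overline{m(u)}$ because $\cot$ has real Laurent coefficients, so $\delta(\bar u) = -\overline{\delta(u)}$ and in particular $|\delta(\bar u)| = |\delta(u)|$. Hence it suffices to treat $\Imm u > 0$, where $\mu(u) = -\tfrac12$.

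Next I would rewrite $m$. Writing $\cot(\pi u/h) = i\,(e^{i\pi u/h}+e^{-i\pi u/h})/(e^{i\pi u/h}-e^{-i\pi u/h})$ and clearing $e^{i\pi u/h}$ from numerator and denominator gives $m(u) = \tfrac12\,(w+1)/(w-1)$, whence
\[
\delta(u) = -\tfrac12 - \tfrac12\,\frac{w+1}{w-1} = \frac{w}{1-w}.
\]
Since $\Imm u > 0$ we have $|w| = e^{-2\pi\Imm u/h} < 1$, so $\delta(u) = \sum_{n\ge 1} w^{n}$ and the triangle inequality yields $|\delta(u)| \le |w|/(1-|w|)$.

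Finally, the hypothesis $\Imm u \ge h/(2\pi)$ is exactly the statement $|w| \le e^{-1}$, and since $t \mapsto 1/(1-t)$ is increasing on $[0,1)$, this bounds $1/(1-|w|)$ by an absolute constant, giving $|\delta(u)| \le c\,e^{-2\pi\Imm u/h}$ with $c = 1/(1-e^{-1}) = e/(e-1)$; combining with the reflection symmetry of the first paragraph produces the bound in the stated $|\Imm u|$ form. I expect the only delicate point to be the constant: the step $|1-w| \ge 1 - |w|$ is sharp exactly when $w$ is real and positive, i.e.\ over the quadrature points on the real line, so to push the constant down to the value $\tfrac32$ quoted one should either keep $\Re w$ in the estimate via $|1-w|^2 = 1 - 2\Re w + |w|^2$, or simply observe that in every application of the lemma (the contour $\Gamma$ in \cref{thm:contour} sits at height $a \gg h/(2\pi)$) $|w|$ is far below $e^{-1}$ and the constant is immaterial. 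The exponential decay itself, which is all the later arguments require, drops straight out of the identity $\delta(u) = w/(1-w)$.
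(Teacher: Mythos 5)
Your argument is the paper's, recast in a different variable: the identity $\delta(u)=w/(1-w)$ for $\Imm u>0$, with $w=e^{2\pi iu/h}$, is precisely the paper's $\coth v-1=2e^{-2v}/(1-e^{-2v})$ under the substitution $v=-i\pi u/h$, and both proofs then estimate the denominator by the triangle inequality to get the exponential decay. You are also right to distrust the constant, and your suspicion deserves a sharper conclusion than you offered: the bound $|w|/(1-|w|)$ is attained when $w$ is real and positive, which actually occurs at $u=kh+ih/(2\pi)$ for $k\in\mathbb{Z}$, where $|\delta(u)|=1/(e-1)\approx 0.582$ exceeds $\frac32 e^{-1}\approx 0.552$; so the constant $\frac32$ in the lemma is in fact slightly too small, the sharp constant on the threshold $|\Imm u|=h/(2\pi)$ being $e/(e-1)\approx 1.58$. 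Your first proposed remedy (retaining $\Ree w$) therefore cannot rescue $\frac32$, since the extremal configuration genuinely occurs on the admissible set. Your second observation is the right disposition: in the only place the lemma is invoked in the proof — bounding the residue terms $r_\pm\delta(\pi_\pm)$ — one has $|\Imm\pi_\pm|=a/2=\pi\left(T+\frac12\log x\right)$, which is well above the threshold $h/(2\pi)=\pi$, so $|w|$ is exponentially small there and any fixed constant is adequate.
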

\begin{proof}
We need to show that for $| \Imm u | \geq h/(2\pi)$,
\[
\left| -\frac{1}{2} + \frac{i}{2}\cot{\left(\frac{\pi u}{h}\right)} \right| \leq \frac{3}{2} e^{-2\pi \left| \Imm u \right|/h}.
\]
Equivalently we must show that for $\Ree{u} \geq \frac{1}{2}$,
\[
\left| \kern1pt \coth{(u)} - 1 \kern1pt \right| \leq 3e^{-2\Ree{u}}.
\]
This follows from the calculation
\[
\coth{(u)} = \frac{e^u + e^{-u}}{e^u - e^{-u}} = 1 + \frac{2e^{-2u}}{1-e^{-2u}}.
\]
\end{proof}

\begin{theorem}
 The truncation error $\sqrt{x}-I(x)$ and the size of the residues $r_{\pm} \delta(\pi_{\pm})$ in Theorem~\ref{thm:contour} decay at the same rate in $N_t$, independently of $x \in [0,1]$, with the choice
 \[
  h = 2\pi^2
 \]
and that rate is $e^{-T} = e^{-\pi \sqrt{N_t/2}}$.
\end{theorem}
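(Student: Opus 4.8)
The plan is to balance the two error contributions from \cref{thm:contour}: the truncation error $\sqrt{x} - I(x)$, which by the first lemma decays like $e^{-T}$, and the residue terms $2\pi i(r_+\delta(\pi_+) + r_-\delta(\pi_-))$, whose size is governed by $|\delta(\pi_\pm)|$. From \cref{lem:poles}, the poles $\pi_{\pm,0}$ have imaginary part $\pm a/2 = \pm\pi(T + \tfrac12\log x)$, so by \cref{lem:delta} we have $|\delta(\pi_\pm)| \le \tfrac32 e^{-2\pi|\Imm \pi_\pm|/h} = \tfrac32 e^{-2\pi^2(T + \frac12\log x)/h}$, valid once $|\Imm \pi_\pm| \ge h/(2\pi)$. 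Since $|r_\pm| = \sqrt{x}/\pi \le 1$, the residue contribution is bounded by a constant times $e^{-2\pi^2 T/h} \cdot x^{-\pi^2/h}\sqrt{x} = e^{-2\pi^2 T/h}\, x^{\,1/2 - \pi^2/h}$.

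First I would observe that the exponent of $x$ must be controlled: we are working on the range $x \in [e^{4-2T},1]$ (the range of \cref{thm:contour}), and as $x$ ranges over this interval the factor $x^{1/2 - \pi^2/h}$ is maximized either at $x=1$ or at $x = e^{4-2T}$ depending on the sign of $1/2 - \pi^2/h$. The key choice is to set $h = 2\pi^2$, which makes $\pi^2/h = 1/2$, so the $x$-dependence cancels exactly: the residue bound becomes a pure constant multiple of $e^{-2\pi^2 T/h} = e^{-T}$, matching the truncation rate. This is the crux — the choice $h = 2\pi^2$ is precisely the one that equilibrates the $x$-dependence so that the residue bound is uniform in $x$ and decays at the same rate $e^{-T}$ as the truncation error. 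I would then substitute $T = \sqrt{N_t h/4} = \sqrt{N_t \pi^2/2} = \pi\sqrt{N_t/2}$ to express the common rate as $e^{-\pi\sqrt{N_t/2}}$.

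To finish, I would verify the side condition $|\Imm \pi_\pm| = \pi(T + \tfrac12\log x) \ge h/(2\pi) = \pi$, i.e.\ $T + \tfrac12\log x \ge 1$; on the range $x \ge e^{4-2T}$ this reads $T + (2 - T) = 2 \ge 1$, which holds, and more generally $T + \tfrac12\log x \ge 1$ throughout the stated range (with room to spare) once $T$ is moderately large, so \cref{lem:delta} applies. I would also check that $a/2 = \pi(T+\tfrac12\log x)$ is the smallest $|\Imm|$ among all poles $\pi_{\pm,k}$, so that no other poles contribute — this is already recorded in the proof of \cref{thm:contour}. The remaining pieces — the two short contour pieces $\Gamma_1 \cup \Gamma_2$ and the boundary integral over $\Gamma$ — are not part of this theorem's claim (which concerns only the truncation error and the residues), so they need not be estimated here; they are handled separately in the full proof of \cref{thm:trap}. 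The main obstacle is purely bookkeeping: making sure the exponent arithmetic $2\pi^2/h = 1$ and $\pi^2/h = 1/2$ lines up so that both the base rate and the cancellation of $x$ occur at the same value $h = 2\pi^2$, and confirming the hypothesis of \cref{lem:delta} is met uniformly on the relevant $x$-range.
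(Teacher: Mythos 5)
Your proposal is correct and follows essentially the same argument as the paper: bound $|r_\pm\delta(\pi_\pm)|$ via Lemma~\ref{lem:delta} using $\Im\pi_\pm = \pm\pi(T+\tfrac12\log x)$, observe that the factor $\sqrt{x}$ from the residue cancels the factor $x^{-\pi^2/h}$ from $\delta$ exactly when $h=2\pi^2$, and conclude the common rate $e^{-2\pi^2 T/h}=e^{-T}=e^{-\pi\sqrt{N_t/2}}$. Your explicit verification of the hypothesis $|\Im\pi_\pm|\geq h/(2\pi)$ of Lemma~\ref{lem:delta} on the range $x\in[e^{4-2T},1]$ is a small, correct addition that the paper leaves implicit.
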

\begin{proof}
It suffices to calculate the residue of $\delta(u)$ at the pole $\pi_{+}$. We know from Lemma~\ref{lem:poles} that $r_+ = -i \sqrt{x}/\pi$. By Lemma~\ref{lem:delta}, $|\delta(\pi_+)| \leq \frac{3}{2} e^{-2 \pi \Imm \pi_+ / h}$. Thus, since $e^{\frac12 \log x} = \sqrt{x}$,
\[
 | r_+ \delta(\pi_+) | \leq \frac{3}{2\pi}\sqrt{x} e^{-2 \pi^2 (T + \frac12 \log x)/ h} = \frac{3}{2\pi}\sqrt{x} e^{-2 \pi^2 T/h} (\sqrt{x})^{-2 \pi^2/ h}.
\]
This matches the rate $e^{-T}$ of the truncation error when $h = 2\pi^2$, independently of $x$.
\end{proof}

When $h = 2\pi^2$, the last term in Theorem~\ref{thm:contour} can be bounded by
\begin{equation*}
    \lvert 2 \pi i (r_+ \mu(\pi_+) + r_- \mu(\pi_-)) \rvert \leq 6 \kern1pt e^{-T}.
\end{equation*}
To conclude the proof, the integrals introduced in \cref{thm:contour} remain to be bounded. It is easy to show that 
\[ 
\left| \int_{\Gamma_1 \cup \Gamma_2} f(u,x) \kern2pt du \right| < \frac{3}{2} \kern1pt e^{-T}.
\]
Furthermore, strong numerical evidence indicates that the remaining integral can be bounded in the following way.
\begin{conjecture}
    For $x \in [e^{4-2T},1]$
    \[
    \left| \int_\Gamma f(u,x) \delta(u) \kern2pt du \right| < 12 \kern1pt e^{-T}
    \]
    in which $\Gamma$ corresponds to the positively oriented rectangle $[1,4T^2+1] \times [-ai,ai]$, with $a = 2\pi (T + \frac12 \log x)$.
    \label{conj:bound}
\end{conjecture}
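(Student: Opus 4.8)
\emph{Proof proposal.} The plan is to estimate the line integral $\int_\Gamma f(u,x)\delta(u)\,du$ edge by edge, using \cref{lem:delta} wherever $|\Imm u|$ is large and direct pointwise bounds on $f$ wherever it is not. Set $L := T + \frac12\log x$, so that $L \in [2,T]$ for $x \in [e^{4-2T},1]$ and (recalling $h = 2\pi^2$) the short sides of $\Gamma$ lie at $\Imm u = \pm a = \pm 2\pi L$. Since all parameters are real, $f(\bar u, x) = \overline{f(u,x)}$ and $\delta(\bar u) = -\overline{\delta(u)}$ (the latter from \cref{delta}), so the integral over the bottom edge equals the complex conjugate of the integral over the top edge, and each vertical edge is symmetric about the real axis; it therefore suffices to bound the top edge and the upper half of each vertical edge, and to double.

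On the top edge $|\Imm u| = 2\pi L \ge 4\pi > h/(2\pi)$, so \cref{lem:delta} gives $|\delta(u)| \le \frac32 e^{-2\pi|\Imm u|/h} = \frac32 e^{-2L} = \frac{3}{2x}e^{-2T}$, and it remains to show $\int_{\mathrm{top}} |f(u,x)|\,|du| \le C\sqrt x$ for an explicit modest $C$. Parametrising the top edge by $s = \Ree\sqrt u \in [s_{\min}, s_{\max}]$, with $s_{\min} \approx \sqrt{\pi L}$ and $s_{\max} \approx 2T$, one obtains the clean identities $\Imm\sqrt u = \pi L/s$, $|\sqrt u|^2 = s^2 + \pi^2 L^2/s^2$ and $|du| = (2|\sqrt u|^2/s)\,ds$, which turn the integral into $\frac{2x}{\pi}\int \sqrt{1 + \pi^2 L^2/s^4}\,\frac{e^{s-T}}{|e^{2(s-T)}e^{2\pi Li/s} + x|}\,ds$, with the first factor $\le \sqrt2$ on the whole range. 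The integrand peaks where $e^{2(s-T)}e^{2\pi Li/s} = x$, which happens exactly at $s = L$ provided $L \ge \sqrt{h/2} = \pi$; when $L < \pi$ the peak falls to the left of $s_{\min}$ and $e^{2(s-T)}$ exceeds $x$ throughout the contour. In either case one bounds the denominator below away from the peak (by $|\rho - x|$ or $\rho|\sin\theta|$, where $\rho = e^{2(s-T)}$, $\theta = 2\pi L/s$), and near the peak by $\approx x$ (there $\theta \approx 2\pi$, so $\Ree(e^{2(s-T)}e^{i\theta}) > 0$), and the resulting $s$-integral is $O(\sqrt x)$. Combining with $x^{-1/2} \le e^{T-2}$ gives a top-edge contribution of size $O(e^{-2}e^{-T})$, and the bottom edge the same.

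On the right edge $\Ree u = 4T^2+1$ one has $\Ree(\sqrt u - T) \approx T$ along the whole edge (since $|\Imm u| \le a \ll T^2$), so $|e^{2(\sqrt u - T)}| \approx e^{2T} \gg x$ and hence $|f(u,x)| = O(x e^{-T}/T)$; multiplying by $|\delta(u)|$ (bounded near the real axis, exponentially small for $|\Imm u| \ge h/(2\pi)$ by \cref{lem:delta}) and integrating over a segment of length $2a = O(T)$ gives $O(x e^{-T}) = O(e^{-T})$. The left edge $\Ree u = 1$ is the delicate one: $\delta$ is merely $O(1)$ near the real axis there (it does not vanish), while $|f|$, though $O(e^{-T})$, is no smaller. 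For $|\Imm u| \le h/(2\pi) = \pi$ one uses the explicit constant bound on $\delta$ (the contour avoids the poles $jh$ of $m$) together with $\Ree\sqrt{1+it} = O(1)$ and the estimate $|e^{2(\sqrt u - T)} + x| \ge x$ on the portion where $\cos(2\Imm\sqrt{1+it}) \ge 0$; for $|\Imm u| \ge \pi$ one uses \cref{lem:delta}, $|\delta(1+it)| \le \frac32 e^{-|t|/\pi}$, which beats the growth $e^{\Ree\sqrt{1+it}} \le e^{\sqrt{|t|}}$ of $|f|$ uniformly because $\sqrt{t} - t/\pi \le \pi/4$ and $\int_\pi^\infty e^{\sqrt t - t/\pi}\,dt$ converges. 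One must also record that the pole of $f$ nearest this edge, namely $\pi_{\pm}$ from \cref{lem:poles} at $\Ree\pi_\pm = L^2 - \pi^2/4 \ge 1.53$, stays a distance $\ge 0.53$ to the right of $\Ree u = 1$, so no small denominator occurs there; this margin, and the size $\sqrt x = e^{L-T}$ of $|f|$ near that pole, both shrink as $x \downarrow e^{4-2T}$ (i.e.\ $L \downarrow 2$) and must be tracked together. Assembling the four edges yields a bound of the form $C'' e^{-T}$.

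The main obstacle is the constant. Every piece is $O(e^{-T})$ with no real difficulty, but the claimed $C'' = 12$ is not much larger than the crude value one gets by replacing $\int_\Gamma |f\delta|\,|du|$ with a sum, over the four edges, of products of the separate suprema of $|f|$ and $|\delta|$ — in particular on the central part of the left edge, where simultaneously $\delta = \Theta(1)$ and $|f| = \Theta(e^{-T})$. To get below $12$ one must estimate the \emph{product} $|f(u,x)\delta(u)|$ and integrate it, uniformly over $x \in [e^{4-2T},1]$, which forces the case split according to whether $L = T + \frac12\log x$ exceeds $\pi$ (governing where the peak of $|f|$ on the horizontal edges, and the near-resonance $e^{2(\sqrt u - T)} \approx x$ on the left edge, fall relative to the contour) and careful bookkeeping near the real axis. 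The computation is routine but long, which is presumably why the bound was verified numerically instead; for the finitely many values of $T$ below the threshold at which the asymptotic estimates become effective, the inequality can be checked by direct evaluation.
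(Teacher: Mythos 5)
The statement you are asked to prove is, in the paper, explicitly a \emph{conjecture}: immediately after it the authors write ``We numerically verified the size of the integral for $x$ in $[0,1]$ for both small and large values of $N_t$. A complete analytic understanding of the different regimes involved would lead us too far.'' There is therefore no paper proof to compare your attempt against --- the authors verified the bound numerically precisely because the analytic constant-chasing is awkward. Your attempt is aimed at something the paper itself does not do.

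Within that caveat, your sketch is a sensible attack and the concrete computations are correct: the conjugate symmetry $\delta(\bar u) = -\overline{\delta(u)}$ and $f(\bar u,x) = \overline{f(u,x)}$ (so the bottom edge integral is the conjugate of the top edge integral), the parametrization $s = \Ree\sqrt{u}$ with $\Imm\sqrt{u} = \pi L/s$ and $|du| = 2|\sqrt u|^2 s^{-1}\,ds$ on the top edge, the range $L = T + \tfrac12\log x \in [2,T]$, $\Ree\pi_\pm = L^2 - \pi^2/4 \ge 1.53$, and the observation that the peak of $|f|$ on the horizontal edges sits at $s = L$ when $L \ge \pi$ and otherwise falls off the left end of the contour. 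You also correctly identify the left edge $\Ree u = 1$ near the real axis as the place where neither $|f|$ nor $|\delta|$ is exponentially small individually, which is the bottleneck.

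The genuine gap is the constant, and it is worth being sharper about why it is genuine rather than ``routine but long''. At $u = 1$ one has $|\delta| = |{-\tfrac12 + \tfrac{i}{2}\cot(1/(2\pi))}| \approx 3.1$, and $|f(1,x)| \approx (e/\pi)\,e^{-T}$, so the left-edge contribution to $\int_\Gamma|f\delta|\,|du|$ is already a substantial fraction of $12\,e^{-T}$ before the other three edges are added. Lemma~\ref{lem:delta} only controls $\delta$ for $|\Imm u|\ge h/(2\pi) = \pi$; for $|\Imm u| < \pi$ you need a quantitative interior bound on $\delta(1+it)$, and it is not clear a priori that the triangle-inequality estimate $\int_\Gamma|f\delta|\,|du|$ stays below $12\,e^{-T}$ uniformly as $L$ ranges over $[2,T]$ without invoking cancellation of the oscillating phase of $f\delta$. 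So what is missing is not merely bookkeeping but a demonstration that the crude absolute-value route closes (or, failing that, a cancellation argument) --- exactly the ``different regimes'' the authors declined to unravel. Your sketch is a correct roadmap and your diagnosis of the difficulty matches theirs, but it does not constitute a proof of the stated bound, and neither does the paper supply one.
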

We numerically verified the size of the integral for $x$ in $[0,1]$ for both small and large values of $N_t$. A complete analytic understanding of the different regimes involved would lead us too far.

\section{Proof of \cref{thm:large_poles}}\label{app2}
\begin{proof}
 The poles $\pi_{j,N}$ of the best approximant to $|x|$ on $[-1,1]$ are described in~\cite[Theorem 2.2]{stahl1994rational}, asymptotically for large $N$, in terms of the function
\begin{equation}\label{eq:H}
    H_N(y) = \frac{N+1}{2} - \frac{1}{\pi} \int_y^\infty \left[ \frac{\sqrt{N}}{t \sqrt{1+t^2}}+\frac{1}{\pi t} \log \left( \frac{t}{1 + \sqrt{1+t^2}} \right) \right] {\rm d}t.
\end{equation}
This function is invertible for $y > 0$ and $N$ sufficiently large, and the poles satisfy
\[
\pi_{j,N} \sim i H_N^{-1}(j).
\]
They are all on the imaginary axis, symmetric with respect to the real line. The poles $p_{j,N}$ of the best approximant to $\sqrt{x}$ are the squares of those of $|x|$~\cite{newman1964}, and hence lie on the negative real axis. In the notation of~\cite{stahl1994rational}, the precise correspondence is $p_{j,N} = \pi_{j,2N}^2$, $1\leq j \leq N$. In our proof it is sufficient to estimate the inverse of $H_N(y)$ for large $N$, evaluated at an integer.

We first estimate $H_N(y)$ itself, assuming large argument $y$. We write
\[
 H_N(y) = \frac{N+1}{2} - \sqrt{N} F_1(y) - F_2(y),
\]
with
\begin{equation}\label{eq:F1}
 F_1(y) = \frac{1}{\pi} \int_y^\infty \frac{1}{t \sqrt{1+t^2}} \kern1pt {\rm d}t
\end{equation}
and
\begin{equation}\label{eq:F2}
F_2(y) = \frac{1}{\pi^2} \int_y^\infty \frac{1}{t} \log \left( \frac{t}{1 + \sqrt{1+t^2}} \right) {\rm d}t.
\end{equation}
Note that $F_2(y)$ is independent of $N$ and, hence, $H_N(y) \sim \frac{N+1}{2} - \sqrt{N} F_1(y) + {\mathcal O }(1)$.

We expand $F_1(y)$ for large $y$. Expanding the integrand as
\[
 \frac{1}{t \sqrt{1+t^2}} = \frac{1}{t^2} + \frac{1}{2t^2} + {\mathcal O}(t^{-6})
\]
and integrating term by term yields
\[
 F_1(y) = \frac{1}{\pi y} - \frac{1}{4 \pi y^3} + {\mathcal O}(y^{-5}).
\]
Thus, to leading order, we find that
\[
 H_N(y) \sim \frac{n+1}{2} - \frac{\sqrt{N}}{\pi y}
\]
with approximate inverse
\[
 H_N^{-1}(x) \sim \frac{\sqrt{N}}{\pi} \left(\frac{N+1}{2}-x \right)^{-1}.
\]
Evaluating at $j = N-k$ leads to the expression
\[
\pi_{N-k,N} \sim i\frac{2\sqrt{N}}{(2k+1)\pi}.
\]
Since the poles grow with $N$, the assumption above of large $y$ is justified. Finally, the poles for $\sqrt{x}$ are obtained by squaring and substituting $2N$ for $N$. This leads to the result.
\end{proof}

\section{Proof of \cref{thm:nb_of_poles}} \label{app3}
\begin{proof}
We start by observing that~\eqref{eq:large_poles} is only accurate for larger poles (so sufficiently small $k$), since in the analysis of the explicit density~\eqref{eq:H} we have assumed large $y$, i.e., a large pole. However, \eqref{eq:H} itself is valid for small $y$ too, as long as $H_N(y)$ is invertible. This is the case for every $y>0$ and $N$ sufficiently large.

Since $p_{j,N} = \pi_{j,2N}^2$, we are looking for the index $j$ such that $\pi_{j,N} \approx i$. In particular, we want to find the largest $k$ such that $i^{-1}\pi_{N-k,2N} > 1$, corresponding to $p_{N-k,N} < -1$. From $\pi_{j,N} \sim iH_N^{-1}(j)$ it follows that $H_{N}(i^{-1}\pi_{j,k}) = j$. It suffices to estimate $H_N(1)$.

The expression~\eqref{eq:H} is fully explicit and we find
\[
 H_{N}(1) = \frac{N+1}{2} - \sqrt{N} F_1(1) - F_2(1).
\]
Identifying $N-k$ with $H_{2N}(1) \sim N - \sqrt{2N} F_1(1)$, it follows at once that $k = {\mathcal O}(\sqrt{N} \kern1pt)$. The numerical value is obtained by evaluating the integral expression for $F_1(1) = \frac{1}{\pi} \int_1^\infty \frac{1}{t \sqrt{1+t^2}} {\rm d}t \approx 0.28$. This leads to $H_{2N}(1) \approx N - 0.4\sqrt{N}$.
\end{proof}

\section{Proof of \cref{thm:trap2}}\label{app4}
We concisely state a generalization of the proof given in \cref{app1} for the V-curved domain: $z = r \exp{(\pm \beta \pi i /2 )}$ with fixed $\beta \in [0,2)$ and $ r\in [\kern .3pt 0,1]$. The formulas \cref{app:f}, \cref{app:I}, \cref{app:S} and \cref{delta} are defined identically, substituting $x$ with the complex variable $z$. The proofs of these statements are analogous to those of the simpler case $\beta=0$, albeit technically more involved. The parameter $\beta$ influences the location of the poles, and correspondingly leads to a slightly different integration contour. In all these expressions the limit $\beta \to 0$ agrees with the earlier results, but the limit $\beta \to 2$ is not viable. The latter limit corresponds to the degenerate case of an angle of $2\pi$.

\begin{lemma} \sloppy
 The truncation error for $T > \frac{1}{2}\log{2} \approx 0.35$ and for $z = r \exp{(\pm \beta \pi i /2 )}$ with fixed $\beta \in [0,2)$ and $r \in [0,1]$ satisfies
 \[
  |\sqrt{z} - I(z)| = \mathcal{O}(e^{-T}).
 \]
\end{lemma}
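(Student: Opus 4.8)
The plan is to follow the proof of the corresponding $\beta=0$ lemma in \cref{app1}, tracking the single new feature: for complex $z$ the denominator $e^{2s}+z$ is no longer a positive real. By conjugation symmetry ($I(\bar z)=\overline{I(z)}$ and $\sqrt{\bar z}=\overline{\sqrt z}$ on the cut plane) it suffices to take $z=re^{i\beta\pi/2}$ with $\beta\in[0,2)$, so that $\arg z\in[0,\pi)$ lies strictly off the branch cut $(-\infty,0]$. The integral representation
\[
\sqrt z=\frac{2z}{\pi}\int_0^\infty\frac{dt}{t^2+z}=\frac{2z}{\pi}\int_{-\infty}^{\infty}\frac{e^s}{e^{2s}+z}\,ds
\]
therefore still holds, and subtracting the truncated integral $I(z)$ gives
\[
\sqrt z-I(z)=\frac{2z}{\pi}\left(\int_{-\infty}^{-T}+\int_{T}^{\infty}\right)\frac{e^s}{e^{2s}+z}\,ds.
\]
Everything then reduces to bounding these two tail integrals by $\mathcal{O}(e^{-T})$, uniformly in $r\in[0,1]$, with a constant allowed to depend on the fixed angle $\beta$.

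The key ingredient is a lower bound on $|e^{2s}+z|$ that is uniform in $s$. From $|e^{2s}+z|^2=e^{4s}+2e^{2s}r\cos(\beta\pi/2)+r^2$ one sees that the cross term is nonnegative when $\beta\le1$, while for $\beta\in(1,2)$ the elementary inequality $2e^{2s}r\le e^{4s}+r^2$ gives $|e^{2s}+z|^2\ge(1+\cos(\beta\pi/2))(e^{4s}+r^2)$. In either case, since $e^{4s}+r^2\ge\max(e^{4s},r^2)$,
\[
|e^{2s}+z|\ge\gamma(\beta)\max(e^{2s},r),\qquad s\in\mathbb{R},
\]
with $\gamma(\beta):=\min\!\bigl(1,\sqrt{1+\cos(\beta\pi/2)}\bigr)\in(0,1]$, $\gamma(0)=1$, and $\gamma(\beta)\to0$ only as $\beta\to2$. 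This is exactly the estimate used in the $\beta=0$ case ($|e^{2s}+x|=e^{2s}+x\ge\max(e^{2s},x)$), merely weakened by the factor $\gamma(\beta)$, and it is available precisely because $-z$ is not a positive real for any $\beta\in[0,2)$. With it, on the right tail the integrand is $\le\frac{2r}{\pi}\cdot\frac{e^s}{\gamma(\beta)e^{2s}}\le\frac{2}{\pi\gamma(\beta)}e^{-s}$, so $\int_T^\infty$ contributes at most $\frac{2}{\pi\gamma(\beta)}e^{-T}$; on the left tail it is $\le\frac{2r}{\pi}\cdot\frac{e^s}{\gamma(\beta)r}=\frac{2}{\pi\gamma(\beta)}e^{s}$ — the prefactor $|z|=r$ cancelling exactly as in the real case — so $\int_{-\infty}^{-T}$ contributes at most $\frac{2}{\pi\gamma(\beta)}e^{-T}$.

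Adding the two tails gives $|\sqrt z-I(z)|\le\frac{4}{\pi\gamma(\beta)}e^{-T}=\mathcal{O}(e^{-T})$; since $\gamma(\beta)=1$ for $\beta\in[0,1]$ this even reproduces the constant $4/\pi$ of the real-variable lemma on that range, and the growth of $1/\gamma(\beta)$ as $\beta\to2$ reflects, already at this crude level, the loss of achievable accuracy near a $2\pi$ corner discussed in \cref{sec:5}. Note that in this argument the stated hypothesis $T>\half\log2$ is not used — any $T>0$ suffices — and is presumably retained for the subsequent lemmas of this appendix, where the quadrature error and the residue estimates do constrain $T$. I expect the one real point of substance to be the uniform denominator bound: neither naive estimate — $|e^{2s}+z|\ge r-e^{2s}$, useful only where $e^{2s}\le r$, nor $|e^{2s}+z|\ge|\Im z|=r\sin(\beta\pi/2)$, which degenerates as $\beta\to0$ — is adequate by itself, and completing the square is what combines them into a single bound uniform in $s$ and $r$ that degrades only in the genuinely hard limit $\beta\to2$.
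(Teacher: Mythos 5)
Your proposal is correct and is essentially the proof the paper intends: for this lemma the paper gives no argument, stating only that the $\beta=0$ proof in Appendix A carries over "analogously, albeit technically more involved," and your argument is exactly that analogue, with the single genuinely new ingredient isolated correctly — the uniform denominator bound $|e^{2s}+z|\ge\gamma(\beta)\max(e^{2s},r)$, which replaces the trivial real-variable estimates (drop $x$ for $s\ge T$, drop $e^{2s}$ for $s\le -T$) and degenerates only as $\beta\to 2$. Your side observation that the hypothesis $T>\frac12\log 2$ plays no role here (any $T>0$ suffices) is also correct; that restriction is evidently carried along for the later lemmas of the same appendix.
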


\begin{lemma}
 The quadrature error for $z = r \exp{(\pm \beta \pi i /2 )}$ with fixed $\beta \in [0,2)$ and $r \in [0,e^{4+2\beta-2T}]$ satisfies
 \[
  |I(z) - S(z)| = \mathcal{O}(e^{-T}).
 \]
\end{lemma}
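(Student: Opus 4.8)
The plan is to follow the template of the $\beta=0$ case — the lemma bounding $|I(x)-S(x)|$ for $x\in[0,e^{4-2T}]$ — while tracking the dependence on the fixed angle $\beta$ and replacing the positivity and monotonicity arguments, which break down once $z$ is complex, by estimates on absolute values. Since $|I(z)-S(z)|\le|I(z)|+|S(z)|$, it suffices to show that each of $|I(z)|$ and $|S(z)|$ is $\mathcal{O}(e^{-T})$, uniformly for $r$ in the stated range, with an implied constant allowed to depend on the fixed $\beta$.

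First I would bound $|I(z)|\le\int_0^{4T^2}|f(u,z)|\,du$ with $f$ as in \cref{app:f}. Writing $z=re^{i\theta}$, $\theta=\pm\beta\pi/2$, and applying the substitutions $s=\sqrt u-T$ and then $t=e^{s}/\sqrt r$ collapses this to
\[ \int_0^{4T^2}|f(u,z)|\,du \;=\; \frac{2r}{\pi}\int_{-T}^{T}\frac{e^{s}}{|e^{2s}+z|}\,ds \;\le\; \frac{2\sqrt r}{\pi}\int_0^{\infty}\frac{dt}{|t^2+e^{i\theta}|} \;=:\; \frac{2\sqrt r}{\pi}\,c(\beta), \]
where $c(\beta)$ is finite precisely because $\theta\ne\pm\pi$, i.e.\ $\beta<2$ (it blows up as $\beta\to2$, the degenerate slit case). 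Since $r\le e^{4+2\beta-2T}$ gives $\sqrt r\le e^{2+\beta-T}$, this already yields $|I(z)|\le\frac{2}{\pi}\,c(\beta)\,e^{2+\beta-T}=\mathcal{O}(e^{-T})$. The same bound on $\int|f|$ in fact holds for all $r\in[0,1]$; it is only useful in the small-$r$ regime, and the remaining range $r\in[e^{4+2\beta-2T},1]$ is handled by the residue/contour argument that generalizes \cref{thm:contour}.

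Next I would estimate $|S(z)|\le h\sum_{j=1}^{N_t}|f(jh,z)|$ by comparing the Riemann sum of $g(u):=|f(u,z)|$ with its integral $\int_0^{4T^2}g=\mathcal{O}(e^{-T})$. The key structural fact, proved by a calculation analogous to the monotonicity calculation in the $\beta=0$ case, is that on $[0,\infty)$ the function $g$ is non-increasing outside at most one interval contained in $\{u:\sqrt u\le2+\beta\}$ — empty when $\beta$ and $r$ are small enough, as in the $\beta=0$ case — whose location is governed by the peak of the factor $e^{\sqrt u-T}/|e^{2(\sqrt u-T)}+z|$ (and, for $\beta>1$, by the genuine resonance of its denominator) and therefore sits near $\sqrt u=T+\frac12\log(r|\cos\theta|)\le2+\beta$ when $r\le e^{4+2\beta-2T}$. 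On the non-increasing part one uses the right-endpoint bound $h\sum g(jh)\le\int g$; on the exceptional interval, which contains only $\mathcal{O}_\beta(1)$ quadrature nodes, one bounds $g$ by its maximum there, using the uniform denominator estimate $|e^{2(\sqrt u-T)}+z|\ge\max\{|e^{2(\sqrt u-T)}-r|,\;r\sin(\beta\pi/2)\}$; the resulting contribution again carries a factor $\sqrt r=\mathcal{O}(e^{2+\beta-T})$, with the integrable $u^{-1/2}$ singularity at $u=0$ (relevant only if the bump falls near the origin) absorbed exactly as in the $\beta=0$ case via $h\sum_{jh\le1}(jh)^{-1/2}\le2\sqrt{1/h}$. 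Collecting terms gives $|S(z)|=\mathcal{O}(e^{-T})$, and the lemma follows.

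The main obstacle is precisely what makes the complex case ``technically more involved'': the clean positivity-and-monotonicity argument of the $\beta=0$ case must be abandoned, and one needs a lower bound on the denominator $|e^{2(\sqrt u-T)}+z|$ that degrades gracefully as $\beta\to2$ (through the factor $\sin(\beta\pi/2)$) rather than collapsing, together with the unimodality-up-to-a-bounded-bump structure of $g$. The threshold $r\le e^{4+2\beta-2T}$ in the hypothesis is exactly the one that keeps this bump within a bounded distance of the origin, so that the small-$r$ lemma dovetails with the contour argument taking over for $r\in[e^{4+2\beta-2T},1]$; checking that the two regimes meet and that all constants remain uniform in $\beta$ on $[0,2)$ is the part most sensitive to the bookkeeping.
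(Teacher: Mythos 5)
The paper does not actually supply a proof of this lemma: Appendix D says only that the proofs ``are analogous to those of the simpler case $\beta=0$, albeit technically more involved,'' so there is no paper argument to compare against line by line. Your proposal is the natural way to carry that generalization out, and it is sound in its essentials. You correctly diagnose what breaks when $z$ leaves the real axis: the $\beta=0$ proof hinges on $0<S(x)<I(x)<\sqrt{x}$, which is a positivity-plus-monotonicity trick unavailable once the integrand is complex, and you replace it with $|I-S|\le|I|+|S|$ and absolute-value bounds on each piece. The bound on $|I(z)|$ is clean and correct: after the substitution $t=e^s/\sqrt r$, the denominator becomes $r|t^2+e^{i\theta}|$, the $r$'s cancel to leave a $\sqrt r$ prefactor, and $\int_0^\infty dt/|t^2+e^{i\theta}|$ converges precisely because $\theta\neq\pm\pi$; combined with $\sqrt r\le e^{2+\beta-T}$ this gives $|I(z)|=\mathcal{O}_\beta(e^{-T})$, and at $\beta=0$ it reproduces the paper's $I(x)<\sqrt{x}$.

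The $|S(z)|$ bound is the part you outline rather than prove. The unimodality-up-to-one-bump structure of $g(u)=|f(u,z)|$ is plausible and consistent with the $\beta=0$ monotonicity calculation, and the threshold $r\le e^{4+2\beta-2T}$ does pin the bump to $\sqrt u\lesssim 2+\beta$, giving $\mathcal{O}_\beta(1)$ affected quadrature nodes; but you assert this rather than derive it, and the two-regime denominator lower bound deserves to be stated a little more carefully: for $\beta\le 1$ (i.e.\ $\cos\theta\ge 0$) one has the cleaner $|e^{2(\sqrt u-T)}+z|\ge r$, and the $r\sin(\beta\pi/2)$ floor is what is actually needed only for $\beta\in(1,2)$, where $\cos\theta<0$. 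With that repair the bump contribution does carry the advertised $\sqrt r$ factor (since at the bump $e^{\sqrt u-T}\approx\sqrt r$), and the $u^{-1/2}$ singularity is harmless as you say. So: same route the paper indicates, more concretely worked out than the paper itself; the only real debt is the monotonicity/bump lemma, which you invoke by analogy rather than prove.
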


\begin{lemma}\label{lem:poles2}
 For $r \in (0,1]$ and $z^+ = r\exp{(i\beta\pi/2)}$ (with fixed $\beta \in [0,2)$), as a function of $u$ the function $f(u,z^+)$ has poles at
\begin{small} \[
  \pi^+_{\pm,k} = \left(T + \frac12 \log r\right)^2 - \frac{\pi^2}{4} \left(\pm(2k+1) + \frac{\beta}{2}\right)^2 + i \pi \left(\pm (2k + 1) + \frac{\beta}{2} \right)  \left(T + \frac12\log r\right).
 \]\end{small}
 The residues of the poles $\pi^+_{\pm} = \pi_{\pm,0}^+$ closest to the real axis are $r^+_{\pm} = r_{\pm,0}^+ = \mp \frac{i}{\pi}\sqrt{z^+}$.
 For $r \in (0,1]$ and $z^- = r\exp{(-i\beta\pi/2)}$ (with fixed $\beta \in [0,2)$), as a function of $u$ the function $f(u,z^-)$ has poles at
 \begin{small} \[
  \pi^-_{\pm,k} = \left(T + \frac12 \log r\right)^2 - \frac{\pi^2}{4} \left(\pm(2k+1) - \frac{\beta}{2}\right)^2 + i \pi \left(\pm (2k + 1) -\frac{\beta}{2} \right)  \left(T + \frac12\log r\right).
 \]\end{small}
 The residues of the poles $\pi^-_{\pm} = \pi_{\pm,0}^-$ closest to the real axis are $r^-_{\pm} = r_{\pm,0}^- = \mp \frac{i}{\pi}\sqrt{z^-}$.
\end{lemma}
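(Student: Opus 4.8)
The plan is to follow the proof of Lemma~\ref{lem:poles} essentially verbatim, carrying along the extra phase $e^{\pm i\beta\pi/2}$. Writing $z^+ = r\, e^{i\beta\pi/2}$, the poles of $f(u,z^+)$ in the variable $u$ are precisely the zeros of the denominator $e^{2(\sqrt u - T)} + z^+$ in \eqref{app:f}, i.e.\ the roots of $2(\sqrt u - T) = \log(-z^+)$. Since $-z^+ = r\, e^{i\pi(1 + \beta/2)}$, taking all branches of the logarithm gives $\log(-z^+) = \log r + i\pi\bigl(\pm(2k+1) + \beta/2\bigr)$, $k = 0,1,2,\dots$, so that $\sqrt u = \bigl(T + (\log r)/2\bigr) + (i\pi/2)\bigl(\pm(2k+1)+\beta/2\bigr)$. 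Squaring this expression reproduces the claimed formula for $\pi^+_{\pm,k}$. The family $\pi^-_{\pm,k}$ comes out the same way from $-z^- = r\, e^{i\pi(1-\beta/2)}$, with $+\beta/2$ replaced by $-\beta/2$. As in the case $\beta = 0$, only the zeros of the denominator contribute: the factor $1/\sqrt u$ gives a branch point at $u=0$, not a pole.

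For the residues I would write, near such a pole $u_0$, $f(u,z^+) = g(u)/\bigl(e^{2(\sqrt u - T)}+z^+\bigr)$ with $g(u) = (z^+/\pi)(1/\sqrt u)\,e^{\sqrt u - T}$ holomorphic and nonvanishing at $u_0$, the zero of the denominator being simple. Hence the residue equals $g(u_0)$ divided by $\frac{d}{du}\bigl[e^{2(\sqrt u - T)}+z^+\bigr]\big|_{u_0} = e^{2(\sqrt{u_0}-T)}/\sqrt{u_0} = -z^+/\sqrt{u_0}$, where the last step uses $e^{2(\sqrt{u_0}-T)} = -z^+$. The factors $z^+$ and $\sqrt{u_0}$ cancel, leaving $-\tfrac1\pi e^{\sqrt{u_0}-T}$. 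For $k=0$ one has $\sqrt{u_0}-T = (\log r)/2 \pm i\pi/2 + i\beta\pi/4$, hence $e^{\sqrt{u_0}-T} = \sqrt r\,(\pm i)\,e^{i\beta\pi/4} = \pm i\sqrt{z^+}$, the principal square root being legitimate because $\beta \in [0,2)$ keeps $\arg z^+ = \beta\pi/2 \in [0,\pi)$. This gives $r^+_{\pm} = \mp\tfrac{i}{\pi}\sqrt{z^+}$, and $r^-_{\pm} = \mp\tfrac{i}{\pi}\sqrt{z^-}$ by the identical computation.

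To finish I would confirm that $\pi^\pm_{\pm,0}$ are indeed the poles nearest the real axis: the modulus of the imaginary part of $\pi^+_{\pm,k}$ is $\pi\,\bigl|\pm(2k+1)+\beta/2\bigr|\cdot\bigl|T + (\log r)/2\bigr|$, and with $\beta/2 \in [0,1)$ the first factor equals $1-\beta/2 \in (0,1]$ or $1+\beta/2 \in [1,2)$ when $k=0$, but is at least $3-\beta/2 > 2$ for every $k \geq 1$ (and likewise for the $\pi^-$ family). The computation is essentially routine bookkeeping of branch cuts, so I do not anticipate a genuine obstacle; the single point to watch is the degenerate limit $\beta \to 2$, where $z^+$ and $z^-$ both tend to $-r$, the pole families $\pi^+_{\pm,k}$ and $\pi^-_{\pm,k}$ coalesce, and the two closest poles drift onto the real axis. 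In that limit the contour-deformation scheme inherited from \cref{app1} no longer isolates a fixed finite number of poles inside the relevant rectangle, which is exactly why $\beta = 2$ must be excluded here and in \cref{thm:trap2}.
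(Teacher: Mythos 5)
Your proof is correct and follows exactly the route the paper intends: the paper provides no written proof of \cref{lem:poles2} at all, saying only that the arguments are ``analogous to those of the simpler case $\beta=0$,'' and your computation is precisely that analogue of \cref{lem:poles} with the phase $e^{\pm i\beta\pi/2}$ carried through the branches of $\log(-z)$. The branch enumeration giving $\pm(2k+1)+\beta/2$ (resp.\ $-\beta/2$), the squaring of $\sqrt{u}$, the residue via $\frac{d}{du}e^{2(\sqrt u - T)} = e^{2(\sqrt u-T)}/\sqrt u$ with the $z/\sqrt{u_0}$ cancellation, and the use of the principal square root $\sqrt{z^{\pm}} = \sqrt r\, e^{\pm i\beta\pi/4}$ all check out. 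One very small imprecision worth flagging: in the ``closest to the real axis'' step, comparing the $k\geq 1$ factor $3-\beta/2$ against the constant $2$ is not quite the comparison needed; what is needed is $3-\beta/2 > 1+\beta/2$, i.e.\ $\beta<2$, so that every $k\geq1$ pole lies strictly farther from the real axis than the larger of the two $k=0$ poles. The inequality happens to be equivalent for $\beta<2$, so the conclusion stands, but stating the comparison directly would make the argument airtight. Your concluding remark about the degenerate $\beta\to 2$ limit, where $z^{\pm}\to -r$ and the closest pole drifts onto the real axis, is a useful observation that the paper leaves implicit.
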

\begin{theorem}\label{thm:contour2}
 The quadrature error for $z = r \exp{(\pm \beta \pi i /2 )}$ with fixed $\beta \in [0,2)$ and $r \in [e^{4+2\beta-2T},1]$ is given by
 \[  I(z) - S(z) = \int_{\Gamma_1 \cup \Gamma_2} f(u,z) \kern2pt du + \int_\Gamma f(u,z) \delta(u) \kern2pt du - 2\pi i (r_{+} \delta(\pi_{+}) + r_- \delta(\pi_-)), \]
 in which $\Gamma_1 = [0,1-\beta/2]$, $\Gamma_2 = [4T^2+1-\beta/2,4T^2]$ and $\Gamma$ corresponds to the positively oriented rectangle $[1-\beta/2,4T^2+1-\beta/2] \times [-ai,ai]$, with $a = 2\pi (T + \frac12 \log r)$. Finally, $r_{\pm}$ and $\pi_{\pm}$ are the poles and residues of $f(u,x)$ according to Lemma~\ref{lem:poles2} (for $z = z^+ = r\exp{(i\beta\pi/2)}$ one has $\pi_\pm = \pi^+_\pm$, for $z = z^- = r\exp{(-i\beta\pi/2)}$ one has $\pi_\pm = \pi^-_\pm$).
\end{theorem}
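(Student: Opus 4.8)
The plan is to mirror, step for step, the proof of \cref{thm:contour} (the case $\beta=0$), replacing the real variable $x$ throughout by the complex variable $z$ and drawing the poles and residues of $f$ from \cref{lem:poles2} instead of \cref{lem:poles}. By the reflection symmetry $z^-=\overline{z^+}$ together with $f(\bar u,z)=\overline{f(u,\bar z)}$ for $u>0$ and the symmetry of $\Gamma$ about the real axis, the claimed identity for $z=z^-=r\,e^{-i\beta\pi/2}$ is the complex conjugate of the one for $z=z^+=r\,e^{i\beta\pi/2}$ (with $\pi^+_{\pm,k}$ replaced by $\pi^-_{\mp,k}$), so it suffices to treat $z=z^+$. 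Note first that the contours $\Gamma_1$, $\Gamma_2$, $\Gamma$ all lie in $\{\Re u>0\}$ (indeed $\Re u\ge 1-\beta/2>0$ on $\Gamma$), so the branch of $\sqrt u$ in $f$ is analytic on and inside them, and the only singular behaviour on $\Gamma_1$ is the integrable endpoint estimate $f(u,z)=\mathcal O(u^{-1/2})$ as $u\to0^+$.

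The one genuinely new ingredient is the pole count: for $r\in[e^{4+2\beta-2T},1]$ and $T$ large (the range is nonempty only once $T\ge 2+\beta$, which holds as $N_t\to\infty$), the rectangle $\Gamma$ encircles exactly the two poles $\pi^+_{+,0}$ and $\pi^+_{-,0}$ of $f(u,z^+)$. Writing $a=2\pi(T+\tfrac12\log r)$, \cref{lem:poles2} gives $\Im\pi^+_{\pm,k}=\tfrac12\bigl(\pm(2k+1)+\tfrac{\beta}{2}\bigr)a$. For $k=0$ these two values have modulus $\tfrac12(1+\tfrac{\beta}{2})a$ and $\tfrac12(1-\tfrac{\beta}{2})a$, both strictly less than $a$ precisely because $\beta<2$; for every $k\ge1$ the factor satisfies $\bigl|\pm(2k+1)+\tfrac{\beta}{2}\bigr|\ge 3-\tfrac{\beta}{2}>2$, so those poles fall outside the strip $|\Im u|<a$. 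For the real parts, $r\in[e^{4+2\beta-2T},1]$ forces $T+\tfrac12\log r\in[2+\beta,T]$, whence
\[
  1-\tfrac{\beta}{2}\;<\;(2+\beta)^2\bigl(1-\tfrac{\pi^2}{16}\bigr)\;\le\;\Re\pi^+_{\pm,0}=\bigl(T+\tfrac12\log r\bigr)^2-\tfrac{\pi^2}{4}\bigl(1\pm\tfrac{\beta}{2}\bigr)^2\;<\;T^2\;<\;4T^2+1-\tfrac{\beta}{2},
\]
so both poles sit strictly inside $\Gamma$; here the lower bound uses that $\beta\mapsto(2+\beta)^2(1-\pi^2/16)$ is increasing and already exceeds $1\ge 1-\beta/2$ at $\beta=0$. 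Finally, since $0<1-\tfrac{\beta}{2}\le1<h$, the horizontal endpoints $1-\tfrac{\beta}{2}$ and $4T^2+1-\tfrac{\beta}{2}$ of $\Gamma$ are not quadrature nodes, whereas $h,2h,\dots,N_th=4T^2$ all lie in the open interval.

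With the pole count in hand, the remaining bookkeeping is identical to the case $\beta=0$. Write $I(z)=\int_{\Gamma_1\cup\Gamma_2}f(u,z)\,du+I_{\mathrm{mid}}(z)$ with $I_{\mathrm{mid}}(z)=\int_{1-\beta/2}^{4T^2+1-\beta/2}f(u,z)\,du$; deforming this segment onto $\Gamma$ and invoking the representation of the characteristic function $\mu$ from \cite[\S13]{trefethen2014trapezoidal} gives $I_{\mathrm{mid}}(z)=\int_\Gamma f\mu\,du-2\pi i\,(r_+\mu(\pi_+)+r_-\mu(\pi_-))$, the last term subtracting the residues collected at $\pi_\pm$. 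Likewise $S(z)=h\sum_{j=1}^{N_t}f(jh,z)$ is $2\pi i$ times the sum of the residues of $f(u,z)\,m(u)$, with $m(u)=-\tfrac i2\cot(\pi u/h)$, at the nodes $jh$ inside $\Gamma$ --- the shift of the interval ensuring that no node lands on $\partial\Gamma$ --- so $S(z)=\int_\Gamma f m\,du-2\pi i\,(r_+m(\pi_+)+r_-m(\pi_-))$. Subtracting the two and substituting $\delta=\mu-m$ produces the stated formula. The main obstacle is one of care rather than of ideas: checking that the pole-localization inequalities of the second paragraph hold uniformly in $\beta\in[0,2)$, and recognizing that the limit $\beta\to2$ is precisely the degeneration where $\Im\pi^+_{+,0}\to a$ forces a pole onto $\partial\Gamma$ (the angle-$2\pi$ corner), which is why the statement excludes $\beta=2$; every other step transcribes verbatim from \cref{app1}.
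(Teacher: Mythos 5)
Your proof is correct, and it takes exactly the approach the paper intends: the paper supplies no explicit proof of \cref{thm:contour2}, stating only that it is ``analogous to \ldots the simpler case $\beta=0$, albeit technically more involved,'' and you have supplied precisely the details that the paper leaves implicit. Your pole-localization computations check out: with $a = 2\pi(T+\tfrac12\log r)$ one has $\Im\pi^+_{\pm,k}=\tfrac{a}{2}(\pm(2k+1)+\tfrac\beta2)$, so only $k=0$ gives $|\Im\pi^+_{\pm,0}|<a$ (for $\beta<2$), and the real-part bound $(2+\beta)^2(1-\pi^2/16)>1-\beta/2$ together with $(T+\tfrac12\log r)^2\le T^2<4T^2+1-\beta/2$ places $\pi^+_{\pm,0}$ strictly inside $\Gamma$. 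You also correctly verify that $1-\beta/2$ and $4T^2+1-\beta/2$ avoid the lattice $\{jh\}_{j\in\mathbb Z}$ (since $h=(2-\beta)\pi^2=2\pi^2(1-\beta/2)$), so the shifted contour misses the poles of $m$. The conjugation symmetry $\pi^-_{\mp,k}=\overline{\pi^+_{\pm,k}}$ is correct and justifies reducing to $z=z^+$; one might spell out that $\delta(\bar u)=-\overline{\delta(u)}$ and that reflecting $\Gamma$ reverses orientation, but that is routine. No gaps.
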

\begin{theorem} \sloppy
 The truncation error $\sqrt{z}-I(z)$ and the size of the residues $r_{+} \delta(\pi_{+})$ and $r_{-} \delta(\pi_{-})$ in Theorem~\ref{thm:contour2} decay at the same rate in $N_t$, independently of $z = r e^{\pm i \beta \pi/2}$ with $r \in [0,1]$, with the choice
 \[
  h = (2-\beta)\pi^2
 \]
and that rate is $e^{-T} = e^{-\pi \sqrt{(2-\beta) N_t/4}}$.
\end{theorem}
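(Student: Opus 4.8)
The plan is to mirror the final theorem of \cref{app1} — the case $\beta = 0$ — feeding in the pole data of \cref{lem:poles2} and the contour representation of \cref{thm:contour2}. By the conjugation symmetry relating $z^- = r e^{-i\beta\pi/2}$ to $z^+ = r e^{i\beta\pi/2}$, it is enough to treat $z = z^+$. Moreover one may restrict to $r \in [e^{4+2\beta-2T},1]$: for smaller $r$ the two direct estimates in \cref{app4} (the analogues of the truncation lemma and the small-$r$ quadrature lemma of \cref{app1}) already bound $\sqrt z - I(z)$ and $I(z) - S(z)$ by $\mathcal{O}(e^{-T})$ with no residues appearing. On the remaining range one has $T + \tfrac12\log r \ge 2+\beta > 0$, which is precisely what keeps the contour and the $\delta$-bound usable.

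First I would invoke the truncation lemma of \cref{app4} to get $|\sqrt z - I(z)| = \mathcal{O}(e^{-T})$, so the theorem reduces to showing that the residue terms $r_\pm\delta(\pi_\pm)$ of \cref{thm:contour2} are $\mathcal{O}(e^{-T})$ and that this rate is sharp. From \cref{lem:poles2} the two poles nearest the real axis have imaginary parts of modulus $\pi\bigl(1\pm\tfrac\beta2\bigr)\bigl(T + \tfrac12\log r\bigr)$; for $\beta\in[0,2)$ the one carrying the factor $1-\tfrac\beta2 = \tfrac{2-\beta}{2}$ lies closer to the axis and hence dominates through \cref{lem:delta}. Its residue has modulus $\tfrac1\pi|\sqrt{z^+}| = \tfrac1\pi\sqrt r$, since $|z^+| = r$.

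Next I would feed this into \cref{lem:delta}, whose hypothesis $|\Im\pi_-|\ge h/(2\pi)$ amounts to $T+\tfrac12\log r\ge 1$ and therefore holds on our range, obtaining a bound of the form $\tfrac{3}{2\pi}\sqrt r\, e^{-\pi^2(2-\beta)(T+\frac12\log r)/h}$ for the dominant residue term. The role of the choice $h = (2-\beta)\pi^2$ is that the exponent then collapses to exactly $-(T+\tfrac12\log r)$, so the factor $e^{-\frac12\log r} = r^{-1/2}$ cancels the $\sqrt r$ and the bound becomes $\tfrac{3}{2\pi}e^{-T}$, uniform in $r$ and $\beta$. The farther pole gives the same computation with exponent $-\tfrac{2+\beta}{2-\beta}(T+\tfrac12\log r)$, and $\tfrac{2+\beta}{2-\beta}\ge 1$ shows it decays at least as fast. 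Together with the truncation estimate this pins the common rate at $e^{-T}$, and $T = \sqrt{N_t h/4} = \pi\sqrt{(2-\beta)N_t/4}$ converts it to the stated form.

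The individual steps are routine once the preceding lemmas are granted; the real care goes into the bookkeeping across the whole interval $r\in[0,1]$. One must confirm that the contour argument of \cref{thm:contour2} is used only where it is valid, so that $T+\tfrac12\log r$ is bounded below and the nearest pole still sits inside $\Gamma$ for every $\beta<2$, and that the complementary small-$r$ band is soaked up by the direct $\mathcal{O}(e^{-T})$ bounds. The one genuinely new feature relative to $\beta=0$ is that the two near-real poles are no longer mirror images in the real axis, so the balancing value of $h$ must be driven by the \emph{closer} of the two (the one with the $1-\tfrac\beta2$ factor) rather than by a symmetric average — this is what produces the $(2-\beta)$ in $h = (2-\beta)\pi^2$.
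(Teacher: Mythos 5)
Your proposal is correct and essentially reconstructs what the paper leaves implicit: the paper gives no written proof for this theorem, stating only that the arguments of \cref{app4} are ``analogous to those of the simpler case $\beta = 0$, albeit technically more involved'', and you have carried out that analogy faithfully. The key point — that because the two near-real poles in \cref{lem:poles2} have imaginary parts proportional to $1\pm\beta/2$, the balancing of the dominant residue against the $e^{-T}$ truncation rate must be driven by the \emph{closer} pole, whose factor $1-\beta/2$ produces $h=(2-\beta)\pi^2$ — is exactly the right generalization of the $\beta=0$ computation, and the cancellation $\sqrt r\cdot r^{-1/2}$ that makes the estimate uniform in $r$ goes through as you say. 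One small remark: for the farther pole the factor $\sqrt r$ no longer cancels exactly (one gets $r^{-\beta/(2-\beta)}$), so ``decays at least as fast'' should strictly be read pointwise in $T+\tfrac12\log r$; combined with the lower bound $T+\tfrac12\log r\ge 2+\beta$ on the contour range the term is still $\mathcal{O}(e^{-T})$, but with a $\beta$-dependent constant, which is worth noting if one wants a fully uniform statement.
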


To conclude the proof, the integrals in \cref{thm:contour2} remain to be bounded. It is easy to show that 
\[
\left| \int_{\Gamma_1 \cup \Gamma_2} f(u,x) \kern2pt du \right| = ±\mathcal{O}(e^{-T}).
\]
Furthermore, strong numerical evidence indicates that the remaining integral can be bounded as well.
\begin{conjecture} \sloppy
    For $z = r \exp{(\pm \beta \pi i /2 )}$ with $r \in [e^{4+2\beta-2T},1]$ and fixed $\beta~\in~[0,2)$
    \[
    \left| \int_\Gamma f(u,x) \delta(u) \kern2pt du \right| = \mathcal{O}(e^{-T})
    \]
    in which $\Gamma$ corresponds to the positively oriented rectangle $[1-\beta/2,4T^2+1-\beta/2] \times [-ai,ai]$, with $a = 2\pi (T + \frac12 \log r)$.
    \label{conj:bound2}
\end{conjecture}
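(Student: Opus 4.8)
The plan is to run the argument that would establish \cref{conj:bound}, but tracking carefully the dependence on $\beta$ and on $r$, since both the rectangle $\Gamma$ and the poles of \cref{lem:poles2} move with these parameters. The preliminary step is the substitution $s=\sqrt{u}-T$ of \cref{sec:22}, under which $f(u,z)\,du=\frac{2z}{\pi}\,\frac{e^{s}}{e^{2s}+z}\,ds$; this recasts the quantity to be bounded as $\int_{\widetilde\Gamma}\frac{2z}{\pi}\,\frac{e^{s}}{e^{2s}+z}\,\delta((s+T)^{2})\,ds$, where $\widetilde\Gamma$ is the image of the boundary of $\Gamma$. I would split $\widetilde\Gamma$ into the two near-horizontal arcs coming from the edges $\Im u=\pm a$ (with $a=2\pi(T+\frac12\log r)$) and the two short arcs coming from the ends $\Re u=1-\beta/2$ and $\Re u=4T^{2}+1-\beta/2$. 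The factor $|\delta|$ is controlled on the horizontal edges by the $\beta$-independent estimate of \cref{lem:delta} applied with $h=(2-\beta)\pi^{2}$ (legitimate since $a>h/(2\pi)$ whenever $r\ge e^{4+2\beta-2T}$), and on the short arcs by the elementary fact that $1-\beta/2$ and $4T^{2}+1-\beta/2=N_{t}h+1-\beta/2$ stay a fixed positive distance from $h\mathbb{Z}$ for fixed $\beta<2$.

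The two short end arcs are handled exactly as in \cref{app1}: $|\delta|$ there is bounded by a constant, while $|f(u,z)|$ is exponentially small in $T$, because $|e^{\sqrt{u}-T}|=e^{\mathcal{O}(\sqrt{T})-T}$ at the left end (the growing prefactor absorbed by the decay of $\delta$) and $\Re\sqrt{u}\ge 2T$ makes $|e^{2(\sqrt{u}-T)}+z|$ dominate the numerator at the right end, giving $|f(u,z)|=\mathcal{O}(T^{-1}e^{-T})$. Since each end arc has length $\mathcal{O}(T)$, both contribute $\mathcal{O}(e^{-T})$. The denominator does not vanish on either arc because by \cref{lem:poles2} every pole inside $\Gamma$ has real part $\approx(T+\frac12\log r)^{2}$, far from $1-\beta/2$ and from $4T^{2}+1-\beta/2$ once $T$ is large.

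The heart of the proof is the estimate on the horizontal edges. There $|\delta(u)|\le\frac32 e^{-2\pi a/h}=\frac32 e^{-4(T+\frac12\log r)/(2-\beta)}$, and, using the identity $\Re\sqrt{u}\cdot\Im\sqrt{u}=\frac12\Im u$, the edge $\Im u=a$ maps to an arc on which $\Re s$ increases monotonically from $\approx\sqrt{\pi(T+\frac12\log r)}-T$ to $\approx T$, with $\Im s=\pi(T+\frac12\log r)/(\Re s+T)$ and arc-length element $|ds|<\sqrt{2}\,d(\Re s)$. Writing $x=\Re s$, I would split the arc at $x=\frac12\log r\pm 1$, where $|e^{2s}|$ passes through the size of $|z|$, and verify that the arc stays a distance of order $2-\beta$ away from every pole $s=\frac12\log r+i(\frac\pi2+\frac{\beta\pi}{4}+\pi n)$ of $e^{s}/(e^{2s}+z)$ (for $z=z^{+}$; analogously for $z^{-}$) — the governing one being the $n=0$ row, which the arc's crossing point at $\Im s=\pi$ misses by exactly $\frac\pi4(2-\beta)$. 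This yields $|e^{2s}+z|\ge c_{\beta}\min(|e^{2s}|,|z|,1)$ on the arc, from which each of the three sub-integrals of $\int|f(u,z)|$ comes out of order $e^{\frac12\log r}$. Combining with the bound on $|\delta|$ gives $e^{\frac12\log r}\,e^{-4(T+\frac12\log r)/(2-\beta)}=e^{-T}\,e^{-(T+\frac12\log r)(2+\beta)/(2-\beta)}=\mathcal{O}(e^{-T})$, since $T+\frac12\log r\ge 2+\beta>0$. The remaining $\Gamma_{1}\cup\Gamma_{2}$ bound is the easy one already asserted in \cref{app4}.

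The main obstacle is precisely this last denominator lower bound, made uniform in $r\in[e^{4+2\beta-2T},1]$ and $\beta\in[0,2)$. A crude pointwise-times-length estimate is hopelessly lossy — for small $r$ it overshoots $e^{-T}$ by nearly a factor $e^{2T}$ — so one must genuinely exploit that $\Re s$ sweeps monotonically through the pole strip, the explicit profile $\Im s=\pi(T+\frac12\log r)/(\Re s+T)$, and the fact that the arc threads between consecutive rows of poles; and one must verify pole-avoidance for all rows, not only $n=0$, with constants that correctly degenerate as $\beta\to 2$ — consistent with the remark in \cref{app4} that the limit $\beta\to 2$ is not viable. Carrying out this regime-by-regime bookkeeping rigorously and uniformly is exactly the analysis we do not perform, which is why \cref{conj:bound} is left open and the present statement is posed as a conjecture.
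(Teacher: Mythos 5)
There is no proof of this statement in the paper to compare against: it is stated as a conjecture, with only numerical verification of the integral for a range of $N_t$, $r$, and $\beta$, and the authors explicitly remark that ``a complete analytic understanding is even more involved than for~\eqref{conj:bound}.'' Your proposal is therefore not in conflict with the paper, and you are candid that the same obstruction stops you.

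That said, your sketch is a sound and detailed map of what an analytic proof would have to do, and the bookkeeping you do carry out is correct: the change of variables $s=\sqrt u-T$ restores the integrand $\tfrac{2z}{\pi}\,e^s/(e^{2s}+z)$; on the horizontal edges $\Im u=\pm a$ one has $\Re\sqrt u\cdot\Im\sqrt u=\tfrac12\Im u$, giving the hyperbola $\Im s=\pi(T+\tfrac12\log r)/(\Re s+T)$; the poles of $e^s/(e^{2s}+z^+)$ sit at $s=\tfrac12\log r+i(\tfrac\pi2+\tfrac{\beta\pi}4+\pi n)$; the arc's crossing of $\Im s=\pi$ occurs at $\Re s=\tfrac12\log r$ and misses the $n=0$ pole by $\tfrac\pi4(2-\beta)$; the constraint $r\ge e^{4+2\beta-2T}$ gives $a\ge2\pi(2+\beta)>h/(2\pi)$ so \cref{lem:delta} applies; and the final exponent combines to $e^{-T}e^{-(T+\frac12\log r)(2+\beta)/(2-\beta)}=\mathcal O(e^{-T})$, with the correct degeneration as $\beta\to2$. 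The one piece you do not (and cannot, without substantial new work) supply is the uniform lower bound $|e^{2s}+z|\gtrsim c_\beta\min(|e^{2s}|,|z|,1)$ along the arc, valid for \emph{all} rows of poles and uniformly over $r\in[e^{4+2\beta-2T},1]$, together with showing that the resulting arc integral of $|f|$ is genuinely $\mathcal O(\sqrt r)$ rather than something picking up an extra algebraic-in-$T$ factor. That missing estimate is exactly the content of the conjecture; your proposal identifies it clearly, but a proof it is not, and indeed a crude length-times-sup bound fails badly (by roughly $e^{2T}$ at the small-$r$ end), as you note. In short: your outline is consistent with the paper's view of the problem, and pinpoints correctly why the statement was left open, but it remains an outline.
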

Again, we numerically verified the size of the integral for both small and large values of $N_t$ and $r$ in $[0,1]$, as well as for different values of $\beta \in [0,2)$. A complete analytic understanding is even more involved than for~\eqref{conj:bound}.

\section*{Acknowledgments}
The authors wish to thank Nick Hale for a productive discussion.

\bibliographystyle{siamplain}
\bibliography{references}
\end{document}